\definecolor{darkred}{rgb}{0.65,0.15,0}
\definecolor{darkgreen}{rgb}{.05,.5,.05}
\newcommand{\nn}{\nonumber}
\newcommand{\alphaSet}{\{\alpha\}}
\newcommand{\HSet}{\{\Hc\}}
\newcommand{\QSet}{\{\Qp\}}
\newcommand{\nats}{\mathds{N}}
\newcommand{\ints}{\mathds{Z}}
\newcommand{\reals}{\mathds{R}}
\newcommand{\comp}{\mathds{C}}
\newcommand{\gaffine}{\gfrak}
\newcommand{\gfinite}{\mathring{\gfrak}}
\newcommand{\gloop}{\mathcal{L}({\gfinite})}
\newcommand{\hfinite}{\mathring{\mathfrak{h}}}
\newcommand{\cheisenberg}{\left\langle\Hc\right\rangle}
\newcommand{\pheisenberg}{\left\langle \Qp \right\rangle}
\newcommand{\kfinite}{\mathring{\kfrak}}
\newcommand{\pfinite}{\mathring{\mathfrak{p}}}
\newcommand{\taufinite}{\mathring{\tau}}
\newcommand{\ideal}{\mathcal{I}} 
\newcommand{\kfrakpar}{\mathfrak{P}} 
\newcommand{\basic}{\mathrm{R}(\Lambda_0)}
\newcommand{\adj}{\mathrm{ad}}
\newcommand{\sorep}[2]{\mathbf{#1}_{\so(#2)}}
\newcommand{\enn}{\mathfrak{e}_{9(9)}}
\newcommand{\eee}{\mathfrak{e}_{8(8)}}
\newcommand{\so}{\mathfrak{so}}
\newcommand{\kenn}{k(\mathfrak{e}_{9(9)})}
\newcommand{\sign}{\mathrm{sign}}
\newcommand{\modulo}[2]{#1\, \textrm{mod}\, #2}
\newcommand{\gsum}[2]{\underset{#1}{\overset{#2}{\overline{\sum}}}} 
\newcommand{\aeq}{\, =& \; }
\newcommand{\eq}{\, =\, }
\newcommand{\acoloneqq}{\, \coloneqq& \, }
\newcommand{\ecoloneqq}{\, \coloneqq \, }
\newcommand{\forwhich}{\; |\,}
\newcommand{\half}{\tfrac{1}{2}}
\newcommand{\Fc}{\mathcal{F}} 
\newcommand{\Uc}{\mathcal{U}} 
\newcommand{\Kc}{\mathcal{K}} 
\newcommand{\Ec}{\mathcal{E}} 
\newcommand{\Hc}{\mathcal{H}} 
\newcommand{\Pc}{\mathcal{P}}
\newcommand{\hfrak}{\mathfrak{h}}
\newcommand{\pfrak}{\mathfrak{p}}
\newcommand{\gfrak}{\mathfrak{g}}
\newcommand{\kfrak}{\mathfrak{k}}
\newcommand{\Nfrak}{\mathfrak{N}} 
\newcommand{\Ar}{\mathring{A}} 
\newcommand{\drm}{\mathrm{d}}
\newcommand{\be}{{\mathbf{e}}}
\newcommand{\Sh}{S^{\Hc}}
\newcommand{\Qr}{Q} 
\newcommand{\Qp}{R} 
\newcommand{\Qt}{\tilde{\Qp}} 
\newcommand{\verQ}{\mathrm{Ver}_{\Qp}} 
\newcommand{\verQfull}{\widetilde{\mathrm{Ver}}_{\Qp}} 
\newcommand{\Gmap}{G}
\newtheorem{theorem}{Theorem}
\newtheorem{proposition}[theorem]{Proposition}
\newtheorem{lemma}[theorem]{Lemma}
\newtheorem{corollary}[theorem]{Corollary}
\theoremstyle{remark}
\newtheorem{rem}[theorem]{Remark}
\theoremstyle{definition}
\begin{document}
\mbox{ }

\vspace{10mm}

\begin{center}
  {\LARGE \sc 
$\kfrak$-structure of basic representation\\[3mm] of affine algebras
   }
    \\[15mm]

{\large
Benedikt K\"onig
}

\vspace{8mm}
{\textit{Max-Planck-Institut f\"{u}r Gravitationsphysik (Albert-Einstein-Institut)\\
Am M\"{u}hlenberg 1, DE-14476 Potsdam, Germany}\\
\texttt{benedikt.koenig@aei.mpg.de}}


\end{center}

\vspace{35mm}

\begin{center} 
\hrule

\vspace{6mm}

\begin{tabular}{p{14cm}}
{\small%
This article presents a new relation between the basic representation of split real simply-laced affine Kac-Moody algebras and finite dimensional representations of its maximal compact subalgebra $\kfrak$. 
We provide infinitely many $\kfrak$-subrepresentations of the basic representation and we prove that these are all the finite dimensional $\kfrak$-subrepresentations of the basic representation such that the quotient of the basic representation by the subrepresentation is a finite dimensional representation of a certain parabolic algebra and of the maximal compact subalgebra. By this result we provide an infinite composition series with a cosocle filtration of the basic representation.
Finally, we present examples of the results and
applications to supergravity.}
\end{tabular}
\vspace{5mm}
\hrule
\end{center}
\thispagestyle{empty}

\newpage
\setcounter{page}{1}


\setcounter{tocdepth}{2}
\tableofcontents

\noindent\rule{\textwidth}{1.5pt}

\section{Introduction}
Affine Kac-Moody algebras, which play an important role in mathematics and physics, are by definition complex, but allow for different real slices from real forms \cite{rousseau1994forms}. One of these real forms is the split real form in which the Chevalley-Serre generators are real, for example the resulting algebra is the real span of the Chevalley-Serre generators and it is called split real Kac-Moody algebra.
The affine (split real) Kac-Moody algebras possess standard highest and lowest-weight representations which are fairly well studied \cite{Kac:1990gs}. Especially, for the basic representation $\basic$ with fundamental weight $\Lambda_0$ associated to the affine node $0$, there exists a realization in terms of vertex operators \cite{Kac:1990gs,Frenkel:1980rn,Segal:1981ap,Goddard:1988}. 

Additionally, every split real Kac-Moody algebra is equipped with a Cartan-Chevalley involution that defines the maximal compact subalgebra of the Kac-Moody algebra as its fixed point subalgebra. 
For finite split real Kac-Moody algebras 
$\gfinite$, the maximal compact subalgebra is a not necessarily indecomposable Kac-Moody algebra $\kfinite$ and the associated Lie group is compact. These algebras are very well understood. However, the scenario is completely different for affine split real Kac-Moody algebras $\gaffine$, for which the infinite dimensional maximal compact subalgebra $\kfrak$ is \textit{not} of Kac-Moody type, but admits infinitely many non-trivial ideals and does not have a Borel decomposition, but rather a filtered structure. 
Nevertheless, in \cite{Berman:1989} defining relations of generators for $\kfrak$ are developed and in \cite{Nicolai:2004nv} certain unfaithful finite dimensional representations, motivated by physics, are constructed, which is quite surprising since infinite dimensional Kac-Moody algebras do not have non-trivial finite dimensional representations. 

This result is generalized to a comprehensive understanding of finite dimensional representations of $\kfrak$ in terms of a (double) parabolic algebra $\kfrakpar$ and a Lie algebra homomorphism $\rho$ mapping $\kfrak$ to $\kfrakpar$ in \cite{Kleinschmidt:2021agj} and Appendix \ref{app:Reps}. The (double) parabolic algebra $\kfrakpar$ has a parabolic grading and therefore it contains infinitely many ideals which allow for an algorithmic construction of finite dimensional representations. The pull back of $\rho$ then defines ideals and representations of $\kfrak$, which preserve the parabolic grading. Thus, the finite dimensional representations of the maximal compact subalgebra $\kfrak$ are quite different from the infinite dimensional representations of the affine split real Kac-Moody algebra $\gaffine$. 
Despite it, understanding the action of the maximal compact subalgebra on a representation of a split real Kac-Moody algebra reveals fascinating additional structure of the representations and has important applications.\footnote{
\textbf{Lie algebra structure and fineness.}
For a Lie algebra $\mathfrak{l}$ with a representation $V$, a \textit{$\mathfrak{l}$-structure} of $V$ is a family of $\mathfrak{l}$-representations $V_i\subset V$. For two $\mathfrak{l}$-structures $A$ and $B$, we call $A$ \textit{finer} than $B$ if $B\subset A$. If for a $\mathfrak{l}$-structure $A$ holds $B\subset A$ for all $\mathfrak{l}$-structures $B$, we call $A$ the \textit{finest $\mathfrak{l}$-structure.}
}

For finite split real Kac-Moody algebras $\gfinite$, the complete information about the action of the maximal compact subalgebra $\kfinite$ on a $\gfinite$-representation is contained in the decomposition into irreducible $\kfinite$-representations, also known as branching. 
This is a standard technique of representation theory, because $\kfinite$ is a finite Kac-Moody algebra and its representations are completely reducible. However, this is not the case for affine split real Kac-Moody algebras. Actually, due to the intriguing structure of the maximal compact subalgebra $\kfrak$ and its representations, we do not even expect that a $\gaffine$-representation can be decomposed in a direct sum of simple or semisimple $\kfrak$-representations. Therefore, the best one may achieve is to find all $\kfrak$-subrepresentations of a $\gaffine$-representation and if possible to provide a (possibly infinite) $\kfrak$-composition series of the $\gaffine$-representation. But to our knowledge, very little is known about proper $\kfrak$-subrepresentation of highest-weight representation of $\gaffine$. 
Nonetheless, from physics applications (in particular supergravity in two dimensions), we expect that there exists proper non-trivial $\kfrak$-subrepresentations of the basic representation. In this context, this work is a first step towards a comprehensive understanding of the $\kfrak$-subrepresentations of highest weight representations of $\gaffine$ and towards understanding possible infinite $\kfrak$-composition series of the highest weight representations. 

Our main results is to reveal the complete/finest $\kfrak$-structure of the basic representation $\basic$ (level one representation, central charge one) of a split real untwisted simply-laced affine Kac-Moody algebra $\gaffine$, which is organized in three parts.
First, for each $N\in\nats_0$ we provide an inequivalent $\kfrak$-subrepresentation $W_N\subset\basic$ with finite co-dimension as the kernel of a surjective $\kfrak$-homomorphism $\Gmap_N$ from the basic representation to the finite dimensional $\kfrak$-representation $\verQ(N)$, such that $\kfrak$ commutes with $\Gmap_N$. Therefore, $\Gmap_N$ is a projection onto $\verQ(N)\simeq \basic/W_N$, where $\verQ(N)$ is truncated Verma module of a certain `parabolic Heisenberg algebra' with generators $\Qp$. We provide the representations $\verQ(N)$ and the projection $\Gmap_N$ explicitly. 

Second, we show that all $\kfrak$-subrepresentations of the basic representation, for which the quotient of the basic representation by the subrepresentation is equivalent to a finite dimensional $\kfrakpar$-representation are equivalent to $W_N$ or a subrepresentation of $W_N$ for some $N\in\nats_0$. 

Third, we provide the cosocle filtration (definitions are provided in Appendix \ref{app:Cosocle_Filtration}) of the basic representation under $\kfrak$ and we show that the cosocle filtration is an infinite composition series. It implies that as a vector space the basic representation is equivalent to a sum of infinitely many finite dimensional semisimple $\kfrak$-representations.
For the convenience of the reader, we provide these results in form of a theorem here, but for which it is necessary to refer to concepts in the main text.

\begin{theorem}
\label{the:CompleteDecomposition}
For $\gaffine$ a split real untwisted simply-laced affine Kac-Moody algebra with maximal compact subalgebra $\kfrak$ and basic representation $\basic$ the statements \textit{a}, \textit{b} and \textit{c} hold.
\begin{enumerate}[label=\alph*.)]
\item For all $N\in\nats_0$, there exists a finite dimensional $\kfrak$-representation $\verQ(N)$ given in Proposition \ref{prop:VerQRepresentation} and a surjective $\kfrak$-homomorphism $\Gmap_N:\, \basic \rightarrow \verQ(N)$ in \eqref{eq:GMap}, which commutes with $\kfrak$ and projects $\basic$ onto $\verQ(N)$. The kernel of $G_N$ is a proper non-trivial $\kfrak$-subrepresentation $W_N=\mathrm{Ker}(G_N)\subset\basic$.
\item If $\basic$ projects onto a finite dimensional $\kfrak$-representation which is also a $\kfrakpar$-representation by $\rho$ in \eqref{eq:LieHomDouble}, then there exists an $N\in\nats_0$ such that this representation is equivalent to $\verQ(N)$ or to a quotient of $\verQ(N)$ by a subrepresentation.
\item The family $(W_N)_{N\in\nats}$ of $\kfrak$-invariant subspaces in $\basic$ is the infinite composition series of $\basic$ with a cosocle filtration (definitions in Appendix \ref{app:Cosocle_Filtration}) .
\end{enumerate} 
\end{theorem}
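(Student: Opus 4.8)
The proof follows the three parts of the statement: \textit{a} is the construction, where the essential identity lives; \textit{b} is a completeness result resting on the classification of finite-dimensional $\kfrakpar$-modules of Appendix~\ref{app:Reps}; and \textit{c} is a structural consequence of \textit{a} and \textit{b} together with generalities about cosocle filtrations. I would work throughout in the (homogeneous) vertex-operator realization of $\basic$ on the Fock space of the homogeneous Heisenberg subalgebra tensored with the group algebra of the root lattice, so that $\gaffine$, and in particular $\kfrak$ through the Cartan--Chevalley involution, acts by explicit normal-ordered vertex operators; note that $\kfrak$ does not preserve the energy grading, so none of the submodules below is a naive energy truncation. Inside $\kfrak$ one isolates the parabolic Heisenberg subalgebra $\pheisenberg$ with generators $\Qp$ and forms its truncated Verma module on the highest-weight line $\comp v_{\Lambda_0}\subset\basic$; this is the finite-dimensional $\kfrak$-module $\verQ(N)$ of Proposition~\ref{prop:VerQRepresentation}, obtained from the full Verma module by killing all states of $\pheisenberg$-degree exceeding $N$. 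The map $\Gmap_N$ of \eqref{eq:GMap} is the projection of $\basic$ onto a vector-space copy of $\verQ(N)$, reading off the $\pheisenberg$-descendant content of a state up to degree $N$; its surjectivity is immediate, since $v_{\Lambda_0}$ and its $\pheisenberg$-descendants lie in the image. The load-bearing step for \textit{a} is the intertwining property $[\rho(x),\Gmap_N]=0$ for all $x\in\kfrak$: I would reduce it to the generating set of $\kfrak$ of \cite{Berman:1989} and verify it with the operator-product expansions of the relevant vertex operators, together with the definition of $\rho$ and the parabolic grading. Granted the intertwining, $W_N:=\mathrm{Ker}(\Gmap_N)$ is a $\kfrak$-submodule; it is proper because $\verQ(N)\neq 0$ and non-trivial because $\dim\verQ(N)<\infty=\dim\basic$.

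\textbf{Part b.}
Suppose $\pi\colon\basic\twoheadrightarrow V$ with $V$ finite dimensional and a $\kfrakpar$-module via $\rho$ in \eqref{eq:LieHomDouble}. By the classification of finite-dimensional $\kfrakpar$-modules in Appendix~\ref{app:Reps}, the parabolic grading of $V$ is bounded; let $N$ be its largest degree, normalized to match the $\pheisenberg$-grading on $\basic$. Comparing the filtration that the $\kfrak$-action induces on $\basic$ with the (pulled-back) parabolic grading of $V$ forces $\pi$ to annihilate every $\Qp$-descendant of $v_{\Lambda_0}$ of degree $>N$, since these would map into graded components absent from $V$; a compatibility argument along the filtration then propagates this to all of $W_N$. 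Hence $\mathrm{Ker}(\pi)\supseteq W_N$ and $V$ is a quotient of $\basic/W_N\cong\verQ(N)$ by a $\kfrak$-submodule, which is the claim.

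\textbf{Part c.}
The degree-truncation maps $\verQ(N+1)\twoheadrightarrow\verQ(N)$ lift through $\basic$, giving $W_{N+1}\subseteq W_N$, with the inclusion proper because $\dim\verQ(N+1)>\dim\verQ(N)$; moreover $\bigcap_N W_N=0$, since the $\Gmap_N$ jointly separate points, reflecting that the $\pheisenberg$-descendants of $v_{\Lambda_0}$ exhaust $\basic$ degree by degree. Each layer $W_N/W_{N+1}\cong\mathrm{Ker}\bigl(\verQ(N+1)\to\verQ(N)\bigr)$ is the degree-$(N+1)$ piece of the $\pheisenberg$-Verma module and, by the explicit description in Proposition~\ref{prop:VerQRepresentation} read as a $\kfrak$-module through $\rho$, is finite dimensional and semisimple. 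Minimality, that is $\mathrm{rad}(W_N)=W_{N+1}$, follows by rerunning the argument of \textit{b} with $W_N$ in the role of $\basic$: every finite-dimensional semisimple $\kfrak$-quotient of $W_N$ again factors through the appropriate truncation. Therefore $(W_N)_N$ is precisely the cosocle filtration of $\basic$; since each layer is of finite length it yields an infinite composition series, and choosing vector-space splittings of the layers identifies $\basic$ with the direct sum of its finite-dimensional semisimple cosocle layers.

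\textbf{Main obstacle.}
I expect the decisive difficulty to be the intertwining identity $[\rho(x),\Gmap_N]=0$ of part \textit{a}: it is the point where the precise form of $\Gmap_N$, the Cartan--Chevalley involution, the vertex-operator expansions and the parabolic grading all have to conspire, and pinning down the correct normalizations of $\verQ(N)$ and $\Gmap_N$ is essentially equivalent to proving the theorem. The completeness argument of \textit{b} is also conceptually delicate, as it uses the full strength of the $\kfrakpar$-classification, but once \textit{a} is in hand it is comparatively formal, and \textit{c} is then largely bookkeeping.
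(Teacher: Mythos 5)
Your skeleton matches the paper's architecture (construction, completeness, filtration), but there are concrete gaps in each part, and the most serious one is already in your description of $\verQ(N)$. In the paper, $\verQ(N)$ is \emph{not} the truncated $\pheisenberg$-Verma module on the highest-weight line: it is $\bigl(\Uc(\pheisenberg)/\ideal^{\Uc}_N\bigr)\otimes\reals[\Qr]$, i.e.\ one truncated Verma module for \emph{each} coset of the root lattice, made finite dimensional only by the equivalence relation $(\omega_{\alpha,N})^2\,\be_\gamma\sim\be_{\gamma+2\alpha}$ of \eqref{eq:Equi_Relation}. A single Verma module over the highest-weight line cannot carry the action of the generators $\Ec^\alpha_n$, which move between $e^\gamma$-sectors; the entire difficulty is to glue the $2^r$ sectors consistently. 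The identity that makes this work is Proposition \ref{prop:ExtendedSchurPoly}, $\bar S^{N,-\Qp}_{\alpha}(-n)=\bar S^{N,\Qp}_{\alpha}(n-2)\,\omega_\alpha^2$, together with the ideal relations of Proposition \ref{prop:IdealRelations}; your ``verify the intertwining by OPEs'' plan never isolates this, and you yourself concede that pinning it down ``is essentially equivalent to proving the theorem.'' The paper's actual route is not OPE computation but: (i) commutation of $\Gmap_N$ with the compact Heisenberg algebra by construction of the generating elements $\Fc^{\Hc}_f$, (ii) commutation with $\Ec^\alpha_n$ on maximal states via the $\Hc$-Schur polynomials, (iii) propagation to all states via the reordering identity \eqref{eq:Commutation_Ecn_Heisenberg}.

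The same omission undermines parts \textit{b} and \textit{c}. In \textit{b}, bounding the parabolic degree of $V$ only controls the $\pheisenberg$-module structure over a fixed $\be_\gamma$; to conclude $V\simeq\verQ(N)$ (or a quotient) one must show that any equivariant surjection $\Gmap'$ necessarily satisfies $\Gmap'(e^{\gamma+\alpha}\otimes 1)=\omega_\alpha^2\,\Gmap'(e^{\gamma-\alpha}\otimes 1)$ — the paper derives this from the ideal relations plus $\bar S^{N,-\Qp}_\alpha(-2)=\omega_\alpha^2$ — otherwise $V$ could a priori fail to respect the $Q/2Q$ identification and would not be a quotient of $\verQ(N)$. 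In \textit{c}, the claim $\bigcap_N W_N=0$ (``the $\Gmap_N$ jointly separate points'') is exactly Proposition \ref{prop:G_infty_Injective}, and it is not automatic: one must rule out that some $e^{\gamma}\otimes f$ and $e^{\gamma+2l\alpha}\otimes f'$ have the same image under every $\Gmap_N$, which the paper does by showing $\sum_{k\geq0}\tfrac{4l}{2k+1}\Qp^\alpha_{2k+1,0}$ is not in the image of $\rho$ (its coefficients are not polynomial in $k$, while everything in $\rho(\cheisenberg)$ has polynomial coefficients). Your appeal to ``descendants exhausting $\basic$ degree by degree'' does not address this, since $\Gmap_\infty$ is injective but not surjective. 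So the proposal is a plausible outline, but the three load-bearing ingredients — the $\omega_\alpha^2$ gluing identity, its forced appearance in any equivariant quotient, and the injectivity of $\Gmap_\infty$ — are all absent.
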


These results find fascinating applications in physics.
(Gauged) supergravity theories in various dimensions are an important class of theories describing fundamental interaction and their mathematical structure is governed by Kac-Moody algebras \cite{Julia:1980gr}. While the bosonic fields of the theories take values in representations of the split real Kac-Moody algebra, the fermionic fields take values in representations of its maximal compact subalgebra. Therefore, supersymmetry, a symmetry between fermions and bosons, maps between representations of the split real Kac-Moody algebra and its compact subalgebra. 
This makes it necessary to understand how representations of the Kac-Moody-algebras project onto representations of the maximal compact subalgebra.
Supergravity theories in $D\geq 3$ have finite Kac-Moody symmetries and therefore the representation theory and the branching rules necessary to understand these theories are known. This makes it also possible to extend these supergravity theories to gauged supergravity theories. 
However, in $D=2$ dimensions, the supergravity theory has an (on-shell) affine Kac-Moody symmetry \cite{Geroch:1970nt,Breitenlohner:1986um,Julia:1980gr,Nicolai:1987kz,Nicolai:1988jb,Nicolai:1998gi}, under which the scalar fields transform in the adjoint representation and the gauge fields transform in the basic representation \cite{Samtleben:2007an}. While the adjoint representation is not a highest weight representation and the $\kfrak$-subrepresentation are understood, it is necessary to develop the $\kfrak$-subrepresentation of the basic representation to understand the supersymmetry of the gauge fields. 
Therefore, the full gauged supergravity theory in $D=2$ dimensions has not been constructed yet. Nevertheless a specific example is given in \cite{Ortiz:2012ib} and the purely bosonic sector is derived in \cite{Samtleben:2007an, Bossard:2018utw, Bossard:2021jix,Bossard:2022wvi, Bossard:2023wgg}. The result of this work is the next step to derive the full gauged supergravity theory in two dimensions. In a related context, the results find applications in teleparallel gravity in two dimensions \cite{Bossard:2024gry}.\\

This work is a starting point for further considerations in this direction in mathematics and physics. First, using the vertex operator construction for non-simply-laced affine Kac-Moody algebra \cite{Goddard:1986ts}, we expect our construction and results to extend to the basic representation of non-simply-laced affine Kac-Moody algebras. 
Second, in a more sophisticated development, our construction may be generalized to all highest-weight representations of affine Kac-Moody algebras. This may be achieved using the DDF construction \cite{DelGiudice:1971yjh} for further highest-weight modules of affine algebras, which is similar to the vertex operator realization.
Third, understanding the $\kfrak$-subrepresentations of highest weight representations of an affine Kac-Moody algebra paves the way to understand hyperbolic Kac-Moody algebras under the action of its maximal compact subalgebra. This is because a hyperbolic Kac-Moody algebra decomposes into sums of highest-weight representations of the associate affine Kac-Moody algebra \cite{Feingold_1983,Kac1988,Bauer_Violette_1997}.
However, this is clearly quite a challenging undertaking since there remains a lot to discover for hyperbolic Kac-Moody algebras \cite{Gebert:1994mv} but the result has important applications in physics.
For instance, it is expected that hyperbolic Kac-Moody algebras are the underlying mathematical structure of space-time \cite{Damour:2002et,Damour:2004zy,Damour:2005zs,Nicolai:2005su} governed by physical theories with hyperbolic symmetry \cite{Kleinschmidt:2005gz,Kleinschmidt:2006ad}. However, to fully describe these theories, the action of the maximal compact subalgebra on the hyperbolic Kac-Moody algebra must be understood. But even before this is fully achieved, our results have applications to these models, because (gauged) supergravity in two dimensions may serve as a toy model to understand the mechanism of how space time emerges from an indefinite Kac-Moody symmetry.

\paragraph{Outline. }
Section \ref{sec:AffineAlgebra_BasicRep} follows \cite{Kac:1990gs} closely to introduce the affine Kac-Moody algebras and the vertex operator realization for the basic representation. The maximal compact subalgebra of the Kac-Moody algebra is then defined in
Section \ref{sec:Maximal_Compact_Subalgebra}, which splits in two subsections. While Subsection \ref{subsec:Maximal_compact_subalgebra} provides the definition and notation of the maximal compact subalgebra, in Subsection \ref{subsec:Parabolic_algebra} we extend the construction of \cite{Kleinschmidt:2021agj} to a double parabolic algebra and apply it to prove an identity for ideals and a certain group action on the double parabolic algebra. Prepared with these identities, we use Section \ref{sec:ActionOnBasic} to analyze the action of the maximal compact subalgebra on the basic representation. This includes new notations and an important Proposition, which allows us to provide the finest $\kfrak$-structure of the basic representation and the infinite composition series in Section \ref{sec:CompleteDecomposition}. In Subsection \ref{subsec:Embedding_k_rep} we provide the infinite filtered set of $\kfrak$-subrepresentations of the basic representation and the projection of the basic representation onto finite dimensional $\kfrak$-representations. In Subsection \ref{subsec:Infinite_Limit} we show that the infinite set of $\kfrak$-subrepresentations is indeed the infinite $\kfrak$-composition series of the basic representation with cosocle filtration. 
In the final section \ref{sec:Applications_and_examples}, we prove an identity to efficiently evaluate the projection of the basic representation onto finite dimensional $\kfrak$-representations. This identity allows us to analyze explicit examples of how to project the basic representation onto $\kfrak$-representations and about the $\kfrak$-subrepresentations of the basic representation. Finally, we emphasis on $\gaffine=\enn$ and its application to supergravity in two dimensions. 
In Appendix \ref{app:Reps} we provide a construction of representations of the double parabolic algebra and Appendix \ref{app:PolySum} defines a generalizations of sums over polynomials. In Appendix \ref{app:Cosocle_Filtration} we provide frequently used definitions and notations as for example the definition of cosocle, cosocle filtration and infinite composition series. In Appendix \ref{app:ExpansionFirst order} we provide concrete expressions for abstract objects used in this work and, for the convenience of the reader, we give supplementary details of certain proofs in Appendix \ref{app:Appendix_Details}.

\paragraph{Acknowledgment.}
I would like to thank in particular Hermann Nicolai and Axel Kleinschmidt as well as Niklas Beisert, Martin Cederwall, Mattia Cesaro, Franz Ciceri, Alex Feingold, Matthias Gaberdiel, Jakob Palmkvist, Siddhartha Sahi and Henning Samtleben for useful discussions. 
Also, I like to thank Bernardo Araneda, Serena Giardino, Axel Kleinschmidt and Hermann Nicolai for helpful comments to and on the manuscript. Furthermore, I would like to thank the Konrad-Adenauer-Stiftung for founding and supporting me on this work and the IMPRS for Mathematical and Physical Aspects of Gravitation, Cosmology and Quantum Field Theory for the support. 

\section{Affine algebra and basic representation}
\label{sec:AffineAlgebra_BasicRep}
This section introduces the infinite dimensional untwisted affine Kac-Moody algebras and its simplest highest-weight representation, the basic representation. Details of definitions and proofs are in \cite{Kac:1990gs}.

Let $A = (a_{ij})_{0\leq i,j \leq r} $ be an untwisted symmetric affine generalized indecomposable Cartan matrix of rank $r$ over $\comp$ and let $\Ar = (a_{ij})_{1\leq i,j \leq r}$ be the finite Cartan matrix of the same type and rank $r$. 
Let $\gaffine$ be the split real form of the affine Kac-Moody algebra $\gaffine(A)$ and $\gfinite$ the split real form of the finite Kac-Moody algebra $\gaffine(\Ar)$.
Then $\gaffine$ is isomorphic to the loop algebra $\gloop$ of the Lie algebra $\gfinite$ with a central extension $K$ and a derivation $\drm$.
This allows us to parametrize the affine algebra $\gaffine$ in terms of the generators of $\gloop$, $K$ and $\drm$. Therefore, we first introduce the finite Lie algebra $\gfinite$.

\paragraph{Finite Lie algebra $\gfinite$.}
Let $(\mathring{\hfrak},\Pi,\Pi^{\vee})$ be are realization of $\Ar$. Then, $\Pi = \{\alpha^{i}\}_{1\leq i\leq r}$ is the set of simple roots of $\gfinite$, $\Pi^{\vee}$ is the set of dual simple roots, which is a basis of the Cartan subalgebra $\hfinite$ and the $\ints$-span of the simple roots is the root lattice $\Qr = \sum_{1\leq i \leq r} \alpha_i \ints$. The root lattice admits a natural bilinear form $(\alpha_i|\alpha_j) = \Ar_{ij}$. In terms of the bilinear form, the set of roots of $\gfinite$ is $\Delta = \{\alpha \in \Qr\forwhich (\alpha|\alpha) = 2\}$. From now on, we usually use the Greek letters $\alpha,\beta$ for roots and $\gamma, \delta$ for elements of the root lattice. 
On the root space exists a \textit{cocycle} \cite{Kac:1990gs}, also called \textit{asymmetric function}, $\epsilon \colon \Qr \times \Qr\rightarrow \Set{\pm1}$, which satisfies
\begin{align}
\label{eq:N_CocycleId}
\epsilon(\gamma + \gamma',\delta )\aeq\epsilon(\gamma,\delta )\,\epsilon(\gamma',\delta )\nn\\
\epsilon(\delta,\,\gamma+ \gamma')\aeq\epsilon(\delta ,\gamma )\,\epsilon(\delta,\gamma')\nn\\ 
\epsilon(\gamma,\gamma)\aeq (-1)^{(\gamma|\gamma)}\,.
\end{align}
The Lie bracket can be defined in terms of the cocycle. 
To carry this out, let $\gfinite_\alpha \subset \gfinite$ be the root space of $\alpha$ and let $E^\alpha \in \gfinite_{\alpha}$ be an element of this root space, where we define for convenience $E^\gamma=0$ for $\gamma\notin\Delta$. Let $H^\gamma\in \hfinite$ be the dual of $\gamma\in \Qr$ with respect to the bilinear form $(\cdot|\cdot)$, for example $\alpha(H^\gamma) = (\alpha|\gamma)$. Then, in terms of the two roots $\alpha,\beta\in\Delta$,
\begin{align}
\label{eq:Com_Finite_Algebra}
\left[H^\alpha,H^\beta\right] \aeq 0\nn\\
\left[H^\alpha,E^\beta\right] \aeq (\alpha|\beta)E^\beta\nn\\
\left[E^\alpha,E^\beta\right] \aeq -\delta_{\alpha,-\beta} H^\alpha\, + \, \epsilon(\alpha,\beta)E^{\alpha+\beta}
\end{align}
defines a Lie bracket on $\gfinite$.
The Lie algebra $\gfinite$ is equipped with the Killing form $\kappa$ which is symmetric, bilinear and invariant under the Lie bracket
\begin{align}
\kappa(H^\alpha,H^\beta) \eq (\alpha|\beta)\, \qquad 
\kappa(H^\alpha,E^\beta) \eq 0\, \qquad
\kappa(E^\alpha,E^{\beta}) \eq -\delta_{\alpha,-\beta}\, .
\end{align}
This sets up the split real finite Lie algebra $\gfinite$, from which it is convenient to define the affine Kac-Moody algebra $\gaffine$.


\paragraph{Affine algebra $\gaffine$.}
The split real affine Kac-Moody algebra $\gaffine$ can be parametrized in terms of the centrally extended loop algebra of $\gfinite$ with an additional derivation. Henceforth, we use this parametrization for the affine Kac-Moody algebra. Therefore, if $\reals[t,t^{-1}]$ are the Laurent polynomials in $t$, $K$ the central element and $\drm$ the derivation, then the loop parametrization of $\gaffine$ is
\begin{align}
\gaffine\eq \gfinite \otimes \reals\left[t,t^{-1}\right] \,\oplus\, \reals K\,\oplus\, \reals \drm\, ,
\end{align}
where the first summand is the loop algebra. For the loop generators we use the notation $x_n = x\otimes t^n$
with $x\in\gfinite$ and $n\in\ints$. The index $n$ of the loop generators is called the \textit{loop index}. 
In this parametrization, the Lie bracket of the affine Kac-Moody algebra is
\begin{align}
\label{eq:Lie_Bracket_Affine}
\left[x_m,y_n\right] \aeq \left[x,y\right]\otimes t^{m+n} + m\delta_{m,-n} \kappa(x,y) K
\nn\\
\left[\drm,x_m\right] \aeq -m x_m\nn\\
\left[K,x_m\right] \aeq \left[K,\drm\right] \eq 0 \, .
\end{align}
The affine algebra $\gaffine$ has two subalgebras which are important for the rest of the paper. The first subalgebra is spanned by the loop generators of index $0$. This subalgebra is isomorphic to the finite Lie algebra $\gfinite$, which motivates to use the same symbol for it.

The second important subalgebra is the Heisenberg subalgebra. The \textit{Heisenberg subalgebra} is the centrally extended loop algebra of the Cartan subalgebra\footnote{The Heisenberg subalgebra is \textit{not} the Cartan subalgebra of $\gaffine$.}
\begin{align}
\hfinite\otimes\reals\left[t,t^{-1}\right]\, \oplus\, \reals K = \left\langle {H_n,\,K\forwhich n\in\ints,H\in\hfinite}\right\rangle.
\end{align}

The affine algebra has no non-trivial finite dimensional representations. However, it has standard highest and lowest-weight representations. The `simplest' highest-weight representation is the basic representation, which is the subject of the next paragraph.

\paragraph{Basic representation.} The basic representation $\basic$ of $\gaffine$ is the infinite dimensional highest-weight representation with fundamental weight $\Lambda_0$, where $0$ is the index of the affine node. A realization of $\basic$ is given in terms of vertex operators which is essential for the rest of this work. Therefore, this paragraph sets up the vertex operator representation and notation, following \cite{Kac:1990gs}. 

In the \textit{vertex operator realization} of the basic representation, the representation space is
\begin{align}
\basic \eq  \reals[\Qr ] \otimes S\big(\bigoplus_{j<0}(\hfinite\otimes t^j)\big)\, ,
\end{align}
where $S$ is the symmetric algebra and $\reals[\Qr ]$ is the group algebra of the root lattice with the embedding $\gamma\mapsto e^\gamma$ for $\gamma\in \Qr$. 
Thus, the elements of the basic representation are parametrized in a tensor product of the root lattice $\Qr$ and polynomials in roots with an additional negative subscript
\begin{align}
\basic = \left\langle
e^{\gamma} \otimes f \forwhich \gamma \in \Qr\, , \; f \in \reals \left[\{\alpha\in\Delta\}_{-\nats}\right] \right\rangle\; .
\end{align}
Here and in the following, using the notation in \eqref{eq:SetNotation}, $f=f(\{\alpha\in\Delta\})$ is always a polynomial of the roots with an additional negative index. 

To introduce the action of $\gaffine$ in the vertex operator realization of the basic representation we introduce a representation of the indexed roots. 
For each $\alpha\in\Delta$ and $n \in \nats$, the negative indexed root $\alpha_{-n}$ acts by multiplication on $f$ and the positive indexed root $\alpha_{n}$ acts as a derivative on $f$ with
\begin{align}
\alpha_{n}\cdot \beta_{-m} \eq n\delta_{n,-m}(\alpha|\beta)\,.
\end{align}
Using this action, the \textit{vertex operator} acts on the states in the basic representation by
\begin{align}
\label{eq:N_Def_VOA_Gamma}
\Gamma^\alpha (z)\, \left(e^{\gamma} \otimes 
f\right) \, = \, z^{(\alpha,\gamma)}\, \epsilon(\alpha,\gamma) \; e^{\gamma + \alpha} \otimes \left(e^{\sum_{k\geq 1}\frac{z^k}{k}\alpha_{-k}}\,e^{-\sum_{k\geq 1}\frac{z^{-k}}{k}\alpha_{k}}\right)
f\; .
\end{align}
This vertex operator action, allows to define the basic representation $\gaffine\rightarrow \mathrm{End}(\basic)$. For all $\alpha\in\Delta$, $\gamma\in Q$ and $n\in\nats$ the basic representations is given by 
\begin{align}
\label{eq:N_VOARepresentation}
E^\alpha_n &\;\mapsto\; \Gamma^\alpha(z)\Big|_{z^{-n-1}},\nn\\ 
H^\alpha_n\left(e^{\gamma} \otimes f\right) &\;\mapsto\;
\begin{cases}
(\alpha,\gamma) \left(e^{\gamma} \otimes f\right) & \text{for $n=0$}\, \\
e^{\gamma} \otimes \alpha_n\, f & \text{for $n\neq 0$}\, 
\end{cases}\, ,\nn\\
K& \;\mapsto\; 1\nn\\
\drm  & \;\mapsto\; \sum_{i,j=1}^r\left(\half H_0^{\alpha^i}{\Ar}^{-1}_{ij}H_0^{\alpha^j} + \sum_{n\geq 1} H_{-n}^{\alpha^i}\Ar^{-1}_{ij}H_n^{\alpha^j}\right)
\end{align}
where $|_{z^{n}}$ is the projection of a polynomial in $z$, $z^{-1}$ on the coefficient of $z^n$ in the monomial basis $({z^k})_{k\in\ints}$.
The central element multiplies with the central charge one, also known as level one, and the eigenvalue of the derivation $\drm$ on a state is the loop level of this state. 
This fixes our notation of the basic representation. However, at the end of the section, we add some useful comments and frequently used notation on the vertex operator action.
The vertex operator action takes a particular simple form in terms of Schur polynomials on the states $e^\gamma \otimes 1\in\basic$. Therefore, let us introduce the notion of these states and the conventions of Schur polynomials in this work. 

\subparagraph{Schur polynomials and maximal states.}
For each $N\in\ints$ and the set notation $\alphaSet \coloneqq\{\alpha_{-k}\forwhich k\in\nats\}$, the \textit{Schur polynomial} $S_N(\alphaSet)$ is defined by the expansion
\footnote{In most literature, the Schur polynomials $S_\lambda(y_1,y_2, \ldots )$ are symmetric polynomials in the variables $y_i$ and are defined in terms of a partition $\lambda$ of $N$. This is related to our definition of Schur polynomials. Setting $\lambda = (N)$ and rewriting the Schur polynomial $S_\lambda(y_1,y_2, \ldots )$ in terms of symmetric power polynomials $\alpha_{-j} = \sum_i y_i^j$ gives the  Schur polynomials $S_N(\alphaSet)$ .} 
\begin{align}
\label{eq:Schurdef}
&\sum_{N\geq 0} z^N S_{N}(\alphaSet) \, \eq \, e^{\sum_{n\geq 1}\frac{z^n}{n}\alpha_{-n}}\, , \nn\\
&S_{n\leq-1}(\{\alpha\}) \eq 0\, ,\qquad
S_0(\{\alpha\}) \eq 1,\qquad  S_{1}(\{\alpha\}) \eq \alpha_{-1},\ldots \quad
\end{align}
The first few Schur polynomials are provided in \eqref{eq:Schur_x_Examples}.
Two important identities for Schur polynomials derive from \eqref{eq:Schurdef} by differentiation. Differentiating \eqref{eq:Schurdef} w.r.t. $\alpha_{-n}$ and w.r.t. $z$ gives for all $n\in\nats$, $N\in\nats_0$ 
\begin{align}
\label{Schur1}
\frac{\partial S_N(\alphaSet)}{\partial \alpha_{-n}}\,=\, \frac1{n} S_{N-n}(\alphaSet)\,,\qquad
N S_N(\alphaSet) \,=\, \sum_{n\geq 1} \alpha_{-n} S_{N-n}(\alphaSet)\,.
\end{align}
The Schur polynomials substantially simplify the action of the vertex operators on maximal states. Therefore, let us define these states.

A state of the basic representation is called \textit{maximal state} if it vanishes under the action of the positive indexed Heisenberg generators. This set of maximal states is
\begin{align}
\left\langle e^\gamma \otimes 1\in\basic \forwhich \gamma\in \Qr \right\rangle .
\end{align}
By the definition of maximal states, the right exponential in the vertex operator acts trivial and therefore, for $m\in\ints$, the action of $E^\alpha_m$ on a maximal state $e^\gamma\otimes 1$ takes a closed simple form in terms of Schur polynomials
\begin{align}
E^\alpha_m\, e^\gamma\otimes 1 \aeq \epsilon(\alpha,\gamma)\,e^{\gamma+\alpha} \otimes S_{-m-(\gamma,\alpha)-1}(\alphaSet)\, .
\end{align}
On these states it is convenient to evaluate the action of the Heisenberg algebra. For each $n\in\nats$, two often used relations for the action of the Heisenberg algebra are 
\begin{align}
\label{eq:SchurId}
H^\alpha_n \left( \,e^{\gamma}\otimes S_{N}(\{\alpha\}) \right) &=  2\, e^{\gamma}\otimes S_{N-n}(\{\alpha\})\, ,\nn\\
e^{\gamma}\otimes S_{N}(\{\alpha\}) &= \frac{1}{N} \sum_{n=1}^N H^\alpha_{-n}\,e^{\gamma}\otimes S_{N-n}(\{\alpha\})\, ,
\end{align}
which derive straightforward from \eqref{eq:N_VOARepresentation} and \eqref{Schur1}.

In this section we introduced the affine algebra and the basic representation in terms of the vertex operator realization. We present the maximal compact subalgebra of the affine algebra in the next section.

\section{Maximal compact subalgebra and representations}
\label{sec:Maximal_Compact_Subalgebra}
Here, we introduce the maximal compact subalgebra of an affine Kac-Moody algebra and the parabolic algebra \cite{Kleinschmidt:2021agj}. The maximal compact subalgebra of an affine algebra is not of Kac-Moody type, for example it admits infinitely many ideals. These ideals allow for finite dimensional representations of the maximal compact subalgebra, which where developed in \cite{Nicolai:2004nv,Kleinschmidt:2021agj, Kleinschmidt:2006dy}. 
This section provides the necessary identities and notation to analyze the action of the maximal compact subalgebra on the basic representations. 

In Subsection \ref{subsec:Maximal_compact_subalgebra} we define the maximal compact subalgebra as the fixed point set of the Cartan-Chevalley involution. In Subsection \ref{subsec:Parabolic_algebra} we extend the results of \cite{Kleinschmidt:2021agj} to a double parabolic algebra and a Lie algebra homomorphism from the maximal compact subalgebra to the double parabolic algebra. This Lie algebra homomorphism allows us to prove a generating identity for these ideals and derive the action of a certain group element on the parabolic algebra.

\subsection{Maximal compact subalgebra}
\label{subsec:Maximal_compact_subalgebra}
In this section we introduce the maximal compact subalgebra of an affine Kac-Moody algebra and the notation of the maximal compact subalgebra for this paper.

The \textit{maximal compact subalgebra} of a split real Kac-Moody algebra is the fix point subalgebra of the Cartan-Chevalley Lie algebra involution. For the finite split real Lie algebra $\gfinite$ and $\alpha\in\Delta$, the Cartan-Chevalley involution $\taufinite$ acts by 
\begin{align}
\label{eq:N_ChavellyInvolutionA1fin}
\taufinite(H^\alpha) \eq H^{-\alpha}\eq -H^{\alpha}\,, \qquad
\taufinite(E^\alpha) \eq E^{-\alpha}\,. 
\end{align}
The maximal compact subalgebra $\kfinite\subset\gfinite$ is the $+1$ eigenspace of $\taufinite$ and has strictly negative signature of the killing form. The $-1$ eigenspace is the non-compact orthogonal complement $\pfinite\subset\gfinite$ with strictly positive signature of the killing form.

Let us turn to the maximal compact subalgebra of the affine Kac-Moody algebra $\gaffine$. 
The Cartan-Chevalley involution $\tau$ of the affine algebra acts on the loop generators with the Cartan-Chevalley involution of the finite algebra and on the loop index it acts by inversion. On $\gaffine$, the involution is given by
\begin{alignat}{6}
\label{eq:N_ChavellyInvolutionA1aff}
\tau(H^\alpha_n) \eq -H^{\alpha}_{-n}\,, \qquad
\tau(E^\alpha_n) \eq E^{-\alpha}_{-n}\,, \qquad
\tau(K) \eq -K\,,\qquad  
\tau(\drm) \eq -\drm\, . 
\end{alignat}
The maximal compact subalgebra $\kfrak \in \gaffine$ is the $+1$ eigenspace subalgebra of $\tau$ and the $-1$ eigenspace is called the non-compact orthogonal complement $\pfrak\subset\gaffine$.
In particular, the central element $K$ and the derivation $\drm$ are not in the maximal compact subalgebra, but for every $\alpha\in\Delta$ and $n\in\ints$, the generators  
\begin{align}
\Ec^\alpha_n \aeq E_n^\alpha+E_{-n}^{-\alpha}\, ,\
\Hc_n^\alpha \eq \half\left(H_n^\alpha-H_{-n}^\alpha\right)\,\in\kfrak\, .
\end{align}
are in the maximal compact subalgebra.
Not all of these elements are linearly independent, because $\Hc_n^\alpha = -\Hc_{-n}^\alpha$ and $\Ec^\alpha_n = \Ec_{-n}^{-\alpha}$ and for a fixed $n$ there are $r$ independent $\Hc_n^\alpha$. However, it is useful to have introduced the notation of $\Hc_n^\alpha$ and $\Ec_n^\alpha$ for all combinations of roots and integers, which also span the maximal compact subalgebra
\begin{align}
\label{eq:Generatingkfrakpar}
\kfrak \aeq \left\langle \Ec_n^\alpha,\, \Hc_n^\alpha\forwhich \alpha\in\Delta,\, n\in\ints\right\rangle\, 
\end{align}
The Lie bracket is induced from the Lie bracket of $\gaffine$ and for $\alpha,\beta \in\Delta$, $n,m\in\ints$ it is
\begin{align}
\label{eq:Compact_Subalg_Commu}
\left[\Hc^\alpha_m,\Hc^\beta_n \right]\aeq 0\nn\\
\left[\Hc^\alpha_m,\Ec^\beta_n\right] \aeq \half (\alpha|\beta)\left(\Ec^\beta_{n+m} -\Ec^\beta_{n-m}\right)\nn\\
\left[\Ec^\alpha_m,\Ec^\beta_n\right] \aeq - 2\left(\delta_{\alpha,-\beta}+\delta_{\alpha,\beta}\right)\Hc^\alpha_{m+n} + \epsilon(\alpha,\beta)\left(\Ec^{\alpha+\beta}_{m+n}+\Ec^{\alpha-\beta}_{m-n}\right). 
\end{align}
Two comments on the structure of the Lie bracket are appropriate. First, the projection of the Heisenberg algebra to the maximal compact subalgebra is an abelian subalgebra, called the \textit{compact Heisenberg algebra}
\begin{align}
\label{eq:Compact_Heisenberg_Def}
\cheisenberg \equiv \left\langle \Hc^\alpha_n\forwhich \alpha\in\Delta,\, n \in \nats\right\rangle\,.
\end{align}
By the Lie bracket relation \eqref{eq:Compact_Subalg_Commu} a product of compact Heisenberg generators on the right of $\Ec_m^\alpha$ is in the universal enveloping algebra equivalent to a finite sum of products of elements of the compact Heisenberg algebra $U_j\in\Uc(\cheisenberg)$ to the left of different elements $\Ec_{n_j}^\alpha$
\begin{align}
\label{eq:Commutation_Ecn_Heisenberg}
&\Ec_m^\alpha U = \sum_{j} U_j \Ec_{n_j}^\alpha \in \Uc(\kfrak)\,.
\end{align}
The explicit form of $U_j$ is not important for the purpose of the paper. However, we sketch the evaluation of $U_j$ in Appendix \ref{app:Appendix_Details}.

Second, the Lie bracket of the maximal compact subalgebra is not additive in the loop index of the generators and does not provide a Borel subalgebra, but admits a filtered structure.
For example, the maximal compact subalgebra $\kfrak$ is \textit{not} of Kac-Moody type but admits infinite many ideals. These ideals are essential to construct finite dimensional representations of the maximal compact subalgebra in terms of the parabolic algebra \cite{Kleinschmidt:2021agj}.

\subsection{Parabolic algebra} 
\label{subsec:Parabolic_algebra} 
In this section we extend the (single) parabolic algebra in \cite{Kleinschmidt:2021agj} to a double parabolic algebra and a Lie algebra homomorphism from the maximal compact subalgebra to this double parabolic algebra. We prove a proposition on the form of the ideals of the maximal compact subalgebra and elaborate on a certain group action on the parabolic algebra.

To construct representations of the maximal compact subalgebra of an affine algebra, \cite{Kleinschmidt:2021agj} constructs a (single) parabolic algebra together with two Lie algebra homomorphisms from the maximal compact subalgebra to this parabolic algebra. The parabolic algebra allows for an algorithmic construction of representations. 
By the pull back of the Lie algebra homomorphism these are also representations of the maximal compact subalgebra. 
However, these representations do not exhaust the set of all representations of the maximal compact subalgebra. In particular, the basic representation cannot be projected to these representations. To construct a larger set of representations of the maximal compact subalgebra, we generalize the (single) parabolic algebra of \cite{Kleinschmidt:2021agj} to a double parabolic algebra $\kfrakpar$ together with a Lie algebra homomorphism from the maximal compact subalgebra to the double parabolic algebra. The general construction of representations of the double parabolic algebra is in Appendix \ref{app:Reps}, these representations are pulled back with the Lie algebra homomorphism to representations of the maximal compact subalgebra.
For the double parabolic algebra we prove two important propositions.

Let us define the double parabolic algebra.  
For the finite Lie algebra $\gfinite$ with compact subalgebra $\kfinite$ and non-compact orthogonal complement $\pfinite$, for $\reals\llbracket u^2\rrbracket$ the formal power series in $u^2$ and for $\mathcal{C}_2=\{1,r\}$ the group of order $2$, the \textit{double parabolic algebra} $\kfrakpar$ is 
\begin{align}
\label{eq:Double_Paraboic_Algebra}
\kfrakpar \eq 
\left(\kfinite \otimes \reals  \llbracket u^2\rrbracket\otimes \mathcal{C}_2 \right) \oplus \left(\pfinite \otimes u \reals\llbracket u^2\rrbracket\otimes \mathcal{C}_2\right).
\end{align}
The Lie bracket is multiplicative in the second and third factor of the tensor product and induced from $\gfinite$ on the first factor
\begin{align}
\label{eq:Comm_PQ}
\left[x\otimes u^k\otimes r^m, y\otimes u^l \otimes r^n \right] \eq  \left[x,y\right]\otimes u^{k+l}\otimes r^{m+n} .
\end{align}
For $k\in\nats_0$, $n\in\ints$ and $\alpha\in\Delta$, a parametrization of the double parabolic algebra is in terms of the generators\footnote{The generators which differ only by an even number in $n$ are equal, however, this definition allows to drop many $\modulo{}{2}$ symbols.}
\begin{align}
\label{eq:CommPQ}
P_{k,n}^\alpha\aeq \left(E^\alpha+(-1)^k E^{-\alpha}\right)\otimes u^{k}\otimes r^{n}\, ,\nn\\
\Qp^\alpha_{2k+1,n} \aeq H^\alpha\otimes u^{2k+1}\otimes r^{n}\,
\end{align}
with the Lie bracket in \eqref{eq:Comm_PQ}.
The abelian subalgebra of the generators
\begin{align}
\pheisenberg  \equiv \left\langle \Qp^\alpha_{2k+1,n}\forwhich \alpha\in\Delta,\, k\in\nats_0,\,0\leq n\leq 1\right\rangle
\end{align}
is called the \textit{parabolic Heisenberg algebra}, but analogous to the compact Heisenberg algebra it is \textit{not} a Heisenberg algebra.

We give some comments on the single factors of the double parabolic algebra.
The $\mathcal{C}_2$ factor induces a natural $\ints_2$ grading on the double parabolic algebra. The level $0$ subalgebra with respect to this grading has elements $x\otimes u^m\otimes 1$, which is the \textit{single parabolic algebra} in \cite{Kleinschmidt:2021agj}. It allows to extend most results for the single parabolic algebra in \cite{Kleinschmidt:2021agj} to the double parabolic algebra.

The second factor $\reals\llbracket u^2\rrbracket$, $u\reals\llbracket u^2\rrbracket$ has a graded structure in the power of $u$. This graded structure extends to the parabolic algebra and its universal enveloping algebra
\begin{align}
\kfrakpar \aeq \bigoplus_{l=0}^{\infty} {\kfrakpar}_l \, ,
\qquad
\Uc(\kfrakpar) \eq \bigoplus_{l=0}^{\infty} \Uc(\kfrakpar)_l\,.
\end{align}
The graded structure allows to identify ideals of the double parabolic algebra. For all $N\in\nats$, the double parabolic algebra and its universal enveloping algebra have the ideals
\begin{align}
\label{eq:IdealsIN}
\ideal_N \aeq \bigoplus_{l=N+1}^{\infty} {\kfrakpar}_l \, ,
\qquad
\ideal^\Uc_N \eq \bigoplus_{l=N+1}^{\infty} \Uc(\kfrakpar)_l\,.
\end{align}
The cosets of the double parabolic algebra and its universal enveloping algebra by these ideals are the quotient parabolic algebras
\begin{align}
\label{eq:QuotientParALg}
\kfrakpar^N \eq \frac{\kfrakpar}{\ideal_N}\, ,
\qquad 
\Uc(\kfrakpar)^N \eq \frac{\Uc(\kfrakpar)}{\ideal^\Uc_N}\,.
\end{align}

Now, let us produce the Lie algebra homomorphism from the double parabolic algebra to the maximal compact subalgebra. In \cite{Kleinschmidt:2021agj} a Lie algebra homomorphism from the (single) parabolic algebra to the maximal compact subalgebra is constructed.
We extend this construction to a Lie algebra homomorphism from the maximal compact subalgebra to the double parabolic algebra. Therefore, we use that the generators of the maximal compact subalgebra \eqref{eq:Generatingkfrakpar} are a subset of the loop generators of $\gfinite$ with variable $t$ on which we define a Möbius transformation
\begin{align}
\label{eq:MoebiusTrans}
t(u) = \frac{1-u}{1+u} \in\reals\llbracket u\rrbracket.
\end{align}
This Möbius transformation extends to a Lie algebra homomorphism $\rho$ from the maximal compact subalgebra to the parabolic algebra \cite{Kleinschmidt:2021agj}.\footnote{\label{ftn:Singel_Double_Parabolic} In \cite{Kleinschmidt:2021agj} Lie algebra homomorphisms $\rho_{\pm}$ from the maximal compact subalgebra to the single parabolic algebra are constructed from the Möbius transformation $t_\pm (u) = \frac{1\mp u}{1\pm u}$. Both homomorphisms map the maximal compact subalgebra differently to the single parabolic algebra. A tensor product of two representation of the single parabolic algebra which are pulled back by the different Lie algebra homomorphisms $\rho_{+}$ and $\rho_-$ to $\kfrak$-representations, is not a $\kfrak$-representation. The double parabolic algebra together with one Lie algebra homomorphism $\rho$ solves this.} 

\begin{proposition}[Extension of Proposition 5 in \cite{Kleinschmidt:2021agj}]
\label{prop:LieAlgHomo_rho}
For each $n\in\ints$ and $\alpha\in\Delta$, the map $\rho\, :\; \kfrak\rightarrow\kfrakpar$
\begin{align}
\label{eq:LieHomDouble}
\rho\left(\Ec^\alpha_{n}\right)
\aeq  \half \sum_{k= 0}^{\infty} a^{(n)}_{k} P^\alpha_{k,n}\, ,
\qquad 
\rho\left(\Hc^\alpha_{n}\right) \eq  \half\sum_{k= 0}^{\infty} a^{(n)}_{2k+1} \Qp^\alpha_{2k+1,n}\, .
\end{align}
is an injective Lie algebra homomorphism.
The coefficients $a_k^{(n)}$ are the coefficients of the power series expansion in \eqref{eq:MoebiusTrans} and are evaluated in \cite{Kleinschmidt:2021agj}
\begin{align}
a_{2k}^{(n)}= 2\sum_{\ell=0}^n\binom{2n}{2l}\binom{k-\ell+n-1}{k-\ell},
\quad 
a_{2k+1}^{(n)}= -2\,\sign(n) \sum_{\ell=0}^{n-1}\binom{2n}{2l+1}\binom{k-\ell+n-1}{k-\ell}.
\end{align}
The homomorphism $\rho$ respects the even/odd grading of the loop level of $\kfrak$.
\end{proposition}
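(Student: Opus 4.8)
The plan is to verify directly that the map $\rho$ defined by \eqref{eq:LieHomDouble} is a Lie algebra homomorphism and then establish injectivity. For the homomorphism property, I would first observe that the generators $\Ec^\alpha_n$, $\Hc^\alpha_n$ of $\kfrak$ in \eqref{eq:Generatingkfrakpar} arise as images of loop generators $E^\alpha \otimes t^n + E^{-\alpha}\otimes t^{-n}$ and $\tfrac12(H^\alpha\otimes t^n - H^\alpha\otimes t^{-n})$ under the identification of $\kfrak$ as the fixed-point subalgebra. The key computational fact, already available from \cite{Kleinschmidt:2021agj}, is that the single-variable Möbius substitution $t \mapsto t(u) = \tfrac{1-u}{1+u}$ induces an algebra map on $\reals[t,t^{-1}]$ (extended to the appropriate completion), since $t(u)$ is invertible as a formal series with $t(u)^{-1} = \tfrac{1+u}{1-u}$ and $t(u)^n = \sum_{k\ge 0} a^{(n)}_k u^k$ defines the coefficients $a^{(n)}_k$. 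I would therefore reduce the homomorphism property to checking that $\rho$ is compatible with the three brackets in \eqref{eq:Compact_Subalg_Commu} when expanded against the bracket \eqref{eq:Comm_PQ}; each side becomes an identity among the $a^{(n)}_k$ expressing $t(u)^m t(u)^n = t(u)^{m+n}$, together with the sign bookkeeping from $\mathcal{C}_2$ (the $r$-grading tracks the parity of the loop index, matching the footnote after \eqref{eq:CommPQ}) and from the $(-1)^k$ in the definition of $P^\alpha_{k,n}$ (which tracks the $u$-parity and the accompanying swap $E^\alpha \leftrightarrow E^{-\alpha}$). Since this is precisely the extension from the single parabolic algebra of \cite{Kleinschmidt:2021agj} to the double one, most of these verifications are cited; the new content is that adjoining the $\mathcal{C}_2$ factor and keeping the full loop index $n\in\ints$ (rather than reducing mod $2$) is consistent, which follows because the $r^n$ exponent is additive under the bracket just like $n$ is additive in $t^n$.

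The even/odd grading claim is immediate from the structure of \eqref{eq:LieHomDouble}: $\rho(\Ec^\alpha_n)$ and $\rho(\Hc^\alpha_n)$ land in the $r^n$-component, so $\rho$ sends loop level $n$ to $r$-degree $n \bmod 2$, and since the loop-level parity is preserved by \eqref{eq:Compact_Subalg_Commu} (every bracket on the right has index $m+n$ or $m-n$, both of parity $m+n$) the grading is respected on the nose.

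For injectivity, I would argue by filtration on the $u$-degree. Suppose $x \in \ker\rho$ is a finite linear combination of the generators $\Ec^\alpha_n$, $\Hc^\alpha_n$; group terms by loop level $n$ and use that $\rho$ respects the $r$-grading so the images for distinct parities of $n$ are linearly independent, and then within a fixed parity use that the leading $u$-coefficient $a^{(n)}_{|n|}$ of $t(u)^n$ is nonzero (indeed $a^{(n)}_0 = 1$ for $n \ge 0$ and one reads off $a^{(n)}_{-n}$-type leading behaviour for $n<0$ from $t(u)^{-1} = \tfrac{1+u}{1-u}$), so that the lowest-degree term in $u$ of $\rho(x)$ cannot cancel unless the corresponding coefficient of $x$ vanishes; peeling off terms by increasing $|n|$ gives $x = 0$. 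Alternatively, and more cleanly, injectivity follows because $t \mapsto t(u)$ is an isomorphism of the relevant function algebras — it has the inverse Möbius transformation — so no nonzero element of $\kfrak$, realized via $t$-polynomials, can map to zero. I would present the second argument as the main line and the filtration argument as a remark.

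The main obstacle I anticipate is purely organizational rather than conceptual: making the correspondence between the $\mathcal{C}_2 = \{1,r\}$ grading and the loop-index parity fully precise, and checking that the sign factors $(-1)^k$ in $P^\alpha_{k,n}$ interact correctly with the $\delta_{\alpha,\pm\beta}$ terms and the $\epsilon(\alpha,\beta)$ cocycle when one expands $[\Ec^\alpha_m,\Ec^\beta_n]$ through $\rho$ — one must confirm that the identity $t(u)^m t(u)^n = t(u)^{m+n}$ reproduces both the $\Ec^{\alpha+\beta}_{m+n}$ and the $\Ec^{\alpha-\beta}_{m-n}$ terms on the right of \eqref{eq:Compact_Subalg_Commu} with the correct coefficients, i.e. that the two "halves" $m+n$ and $m-n$ both emerge from the single generating-function identity once the $(-1)^k$ signs are accounted for. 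This is exactly the point where the double (rather than single) parabolic structure is needed, and verifying it carefully is the heart of the proof; everything else reduces to the cited results of \cite{Kleinschmidt:2021agj}.
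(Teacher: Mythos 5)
Your proposal is correct in outline but takes a genuinely more computational route than the paper. The paper's proof is two lines: since $r^2=1$ but $r\neq 1$, the group algebra of $\mathcal{C}_2$ splits via the evaluations $r\mapsto\pm1$, so the double parabolic algebra decomposes into two copies of the single parabolic algebra and $\rho$ specializes to the pair $(\rho_+,\rho_-)$ of Footnote \ref{ftn:Singel_Double_Parabolic}; both the homomorphism property and injectivity are then inherited wholesale from the cited Proposition 5 of \cite{Kleinschmidt:2021agj}, and the grading statement follows from $r\neq1$, $r^2=1$. You instead propose to verify the three brackets of \eqref{eq:Compact_Subalg_Commu} directly against \eqref{eq:Comm_PQ} via the generating-function identity $t(u)^m t(u)^n=t(u)^{m+n}$, and to get injectivity from a $u$-degree filtration or the invertibility of the M\"obius substitution. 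That route would work, and your grading argument (parity of $m\pm n$ equals parity of $m+n$) is fine, but note two things. First, what you call ``the heart of the proof'' --- checking that the single identity $t(u)^mt(u)^n=t(u)^{m+n}$ reproduces both the $\Ec^{\alpha+\beta}_{m+n}$ and the $\Ec^{\alpha-\beta}_{m-n}$ terms with the correct $(-1)^k$ and cocycle signs --- is precisely the verification the paper's specialization trick renders unnecessary, since it is already contained in the cited fact that $\rho_\pm$ are homomorphisms; as written, your proposal defers this computation rather than performing it, so your main line is incomplete exactly where the paper's is effortless. Second, a small numerical slip: from \eqref{eq:LieHomDouble_Expl} one reads off $a^{(n)}_0=2$ (the prefactor $\tfrac12$ in \eqref{eq:LieHomDouble} absorbs it), not $a^{(n)}_0=1$; this does not affect your filtration argument, whose only need is nonvanishing of the leading coefficient.
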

\begin{rem}
\label{rem:Coef_ank_Polynomials}
The coefficients $a_k^{(n)}$ satisfy two identities. For fixed $n \in \nats$, the coefficients $a_k^{(n)}$ are polynomials in $k$ of degree $n-1$. For fixed $k\in\nats_0$, the coefficients $a_k^{(n)}$ are polynomials in $n$ of degree $k$. The expansions of the first few elements in $\Ec_n^\alpha$ is in \eqref{eq:LieHomDouble_Expl}
\end{rem}
\begin{rem}
For each $N\in\nats_0$, the codomain of $\rho$ can be quotiented by the ideal $\ideal_N$. This defines the surjective Lie algebra homomorphisms 
\begin{align}
\rho^N:\,\basic\,\rightarrow\, \kfrakpar^N
\end{align}
by \eqref{eq:LieHomDouble} but the summation from $k=0$ to $k=N$ resp. $k=\left\lfloor\frac{N}{2}\right\rfloor$ \cite{Kleinschmidt:2021agj}.
\end{rem}

\begin{proof}
The homomorphism $\rho$ respects the even-odd grading of the loop index of the maximal compact subalgebra because $r\neq1$, but $r^2=1$. 

Setting $r=\pm 1$ reduces the homomorphism $\rho$ to the injective Lie algebra  homomorphisms $\rho_{\pm}$ from the maximal compact subalgebra to the single parabolic algebra (see footnote \ref{ftn:Singel_Double_Parabolic}). Because $\rho_{\pm}$ are injective Lie algebra homomorphisms, also $\rho$ is an injective Lie algebra homomorphisms. 
\end{proof}

For each $N\in\nats_0$, the Lie algebra homomorphism $\rho$ allows to identify ideals in the maximal compact subalgebra $\kfrak$ as the inverse image of ideals $\ideal^N$ in the parabolic algebra. For these ideals, we use the same symbol $\ideal_N$.
While the ideals of the parabolic subalgebra derive immediately from the parabolic grading, the form of the ideals in the maximal compact subalgebra is more subtle. A generating identity for the ideals is subject of Proposition \ref{prop:IdealRelations}. In the proof of the proposition and also later in this work, we use an identity for sums of polynomials with binomial factors. Therefore, let us first give this identity globally and then use it to prove the proposition.

For $p\in\reals[m]_{N-1}$ a polynomial in $m$ of degree less then $N$, the binomial sum
\begin{align}
\label{eq:BinomialId}
\sum_{m=0}^{N}(-1)^m \binom{N}{m} p(m) \eq 0
\end{align}
vanishes. This follows by differentiating $k$ times for $1\leq k\leq N-1$ the binomial expansion of $(1+x)^N$ w.r.t. $x$ and then setting $x$ to $-1$.
With this identity, we prove the next proposition about ideals of the maximal compact subalgebra. A different formula for the same ideals is proposed in \cite{Kleinschmidt:2021eal}.

\begin{proposition}[Ideals]
\label{prop:IdealRelations}
For all $N\in\nats_0$, $\alpha\in\Delta$, $a\in\nats$, $b,n\in\ints$ the elements
\begin{align}
\label{eq:IdealRelations}
\sum_{m=0}^{N+a}(-1)^m\binom{N+a}{m}\,\Ec^\alpha_{n+2bm}\in \ideal_N\, ,\quad  \sum_{m=0}^{N+a}(-1)^m\binom{N+a}{m}\,\Hc^\alpha_{n+2bm} \in \ideal_N
\end{align}
of the filtered algebra are in the ideal $\mathcal{I}_N$.
\end{proposition}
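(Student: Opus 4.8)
The plan is to show that under the homomorphism $\rho$ of Proposition \ref{prop:LieAlgHomo_rho} the images of the displayed alternating sums land in the parabolic ideal $\ideal_N$, which by definition consists of all elements of $u$-degree strictly greater than $N$. Since $\ideal_N\subset\kfrak$ is \emph{defined} as $\rho^{-1}(\ideal_N\subset\kfrakpar)$, it suffices to verify the statement on the parabolic side. I will treat the $\Ec$-case in detail; the $\Hc$-case is identical after replacing $P^\alpha_{k,n}$ by $\Qp^\alpha_{2k+1,n}$ and the coefficients $a_k^{(n)}$ by their odd-index counterparts.

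First I would apply $\rho$ termwise, using \eqref{eq:LieHomDouble}:
\begin{align}
\label{eq:IdealProofStep1}
\rho\Big(\sum_{m=0}^{N+a}(-1)^m\binom{N+a}{m}\,\Ec^\alpha_{n+2bm}\Big)
\eq \half\sum_{k=0}^{\infty}\Big(\sum_{m=0}^{N+a}(-1)^m\binom{N+a}{m}\,a^{(n+2bm)}_{k}\Big)P^\alpha_{k,n+2bm}\,.
\end{align}
The key observation is that, because $r^2=1$, the $\mathcal{C}_2$-label $r^{n+2bm}$ appearing inside $P^\alpha_{k,n+2bm}$ equals $r^{n}$ for every $m$, so the generator $P^\alpha_{k,n+2bm}$ is literally the single generator $P^\alpha_{k,n}$ independent of $m$ (see the footnote after \eqref{eq:CommPQ}). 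Hence the right-hand side of \eqref{eq:IdealProofStep1} collapses to $\half\sum_{k\geq 0}c_k\,P^\alpha_{k,n}$ with $c_k=\sum_{m=0}^{N+a}(-1)^m\binom{N+a}{m}\,a^{(n+2bm)}_{k}$. Now I invoke Remark \ref{rem:Coef_ank_Polynomials}: for fixed $k$, the map $n\mapsto a^{(n)}_{k}$ is a polynomial in $n$ of degree $k$, hence $m\mapsto a^{(n+2bm)}_{k}$ is a polynomial in $m$ of degree $k$. Therefore, whenever $k\leq N+a-1$, i.e. $\deg_m\big(a^{(n+2bm)}_{k}\big)=k<N+a$, the binomial identity \eqref{eq:BinomialId} gives $c_k=0$. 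Consequently the only surviving terms have $k\geq N+a\geq N+1$, and these lie in $\ideal_N\subset\kfrakpar$ by the definition \eqref{eq:IdealsIN}. This proves $\rho$ of the alternating sum lies in $\ideal_N$, hence the alternating sum itself lies in $\ideal_N\subset\kfrak$.

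One point of care I expect to be the main (though mild) obstacle: Remark \ref{rem:Coef_ank_Polynomials} is stated for $n\in\nats$, whereas the proposition allows $n\in\ints$ and $b\in\ints$, so $n+2bm$ ranges over all integers and can be negative or zero. I would handle this by noting that $a^{(-n)}_{2k}=a^{(n)}_{2k}$ and $a^{(-n)}_{2k+1}=-a^{(n)}_{2k+1}$ from the explicit formulas (the $\sign(n)$ prefactor), and $a^{(0)}_{k}=2\delta_{k,0}$, so the polynomiality in $n$ of degree $k$ persists on all of $\ints$ up to an overall sign $\sign(\cdot)$ in the odd case — and a global sign does not affect the vanishing of the alternating sum. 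Alternatively, and more cleanly, one can argue directly from the Möbius transformation \eqref{eq:MoebiusTrans}: the generating function $\sum_k a^{(n)}_k u^k = 2\,t(u)^{n}/(\text{normalization})$ is, for each fixed $k$, extracted as a coefficient that depends polynomially on $n$ of degree $k$ for all $n\in\ints$ since $t(u)^n=\big(\tfrac{1-u}{1+u}\big)^n$ and its $u$-expansion coefficients are manifestly polynomial in $n$. With this in hand the degree count $k<N+a$ and \eqref{eq:BinomialId} finish the proof, and the $\Hc$-statement follows verbatim using the odd coefficients $a^{(n)}_{2k+1}$, whose degree in $n$ is again $2k+1$, so the surviving terms have $2k+1\geq N+a$, again landing in $\ideal_N$.
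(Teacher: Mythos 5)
Your proof is correct and follows essentially the same route as the paper: apply the homomorphism (the paper uses the truncation $\rho^N$ and shows the element lies in its kernel, which is the same as your statement that the image under $\rho$ lands in the parabolic ideal), use the collapse $P^\alpha_{k,n+2bm}=P^\alpha_{k,n}$, and kill every coefficient with $k\leq N$ via the polynomiality of $n\mapsto a^{(n)}_k$ of degree $k$ (Remark \ref{rem:Coef_ank_Polynomials}) combined with the alternating binomial identity \eqref{eq:BinomialId}. The only substantive addition is your care in extending that polynomiality from $n\in\nats$ to all of $\ints$ (the paper takes this for granted), and your generating-function argument via \eqref{eq:MoebiusTrans} is the clean way to settle it.
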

\begin{proof}
For every $N\in\nats_0$, $a\in\nats$, $b,n\in\ints$  by identity \eqref{eq:BinomialId} and Remark \ref{rem:Coef_ank_Polynomials}, the elements \eqref{eq:IdealRelations} are in the kernel of $\rho^N$
\begin{align}
\rho^N\left(\sum_{m=0}^{N+a}(-1)^m\binom{N+a}{m}\,\Ec^\alpha_{n+2bm}\right) 
\aeq 
\half  \sum_{k= 0}^{N} P^\alpha_{k,n}\sum_{m=0}^{N+a}(-1)^m\binom{N+a}{m}\,  a^{(n+2bm)}_{k} \eq 0
\end{align}
The same holds for the second element in \eqref{eq:IdealRelations}.
\end{proof}     

This identity of ideals in the maximal compact subalgebra is important to investigate finite dimensional $\kfrakpar$-representations. But before we can do so, let us prove a proposition about the action of certain group elements on the parabolic algebra at the end of this section.

\begin{proposition}
\label{prop:WeylGrou}
For all $N\in\nats$ and $\alpha,\beta\in\Delta$, the group elements 
\begin{align}
\label{eq:WeylGroupElements}
\omega_{\alpha,N} \aeq \exp(\sum_{k\geq 0}\tfrac{2}{2k+1}\Qp^\alpha_{2k+1,0}) \, \in\, \Uc(\kfrakpar)^N\, 
\end{align}
act on the parabolic algebra $\kfrakpar^N$ by conjugation and satisfy
\begin{alignat}{4}
\label{eq:WeylGroup}
\omega_{\alpha,N}\rho^N\left(\Ec^\beta_n\right)\omega_{\alpha,N}^{-1}\eq  \rho^N\left(\Ec^\beta_{n-(\alpha|\beta)}\right)\, ,
\qquad 
\omega_{\alpha,N} \rho^N\left(\Hc^\beta_n\right)\omega_{\alpha,N}^{-1} \eq \rho^N\left(\Hc^\beta_n\right)\, . 
\end{alignat} 
\end{proposition}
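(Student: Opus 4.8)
The plan is to recognise conjugation by $\omega_{\alpha,N}$ as the substitution of the loop variable that underlies the homomorphism $\rho$. Set $X_\alpha\coloneqq\sum_{k\geq 0}\tfrac{2}{2k+1}\Qp^\alpha_{2k+1,0}$, so $\omega_{\alpha,N}=\exp(X_\alpha)$. Since $\Qp^\alpha_{2k+1,0}=H^\alpha\otimes u^{2k+1}\otimes 1$, the element $X_\alpha$ is $H^\alpha$ tensored with the odd power series $\sum_{k\geq 0}\tfrac{2}{2k+1}u^{2k+1}=\log\tfrac{1+u}{1-u}=-\log t(u)$, with $t(u)$ the M\"obius transformation \eqref{eq:MoebiusTrans}; writing the middle tensor factor of $\kfrakpar$ as a formal power series, $X_\alpha=-\log t(u)\,H^\alpha$. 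Hence conjugation by $\exp(X_\alpha)$ ought to rescale a loop vector $E^{\pm\beta}\otimes t(u)^m$ by $t(u)^{\mp(\alpha|\beta)}$, i.e. shift its Laurent exponent by $\mp(\alpha|\beta)$, which is exactly what \eqref{eq:WeylGroup} asserts.

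To turn this into a proof I would first settle well-definedness. Each $\Qp^\alpha_{2k+1,0}$ has $u$-degree $\geq 1$, so $X_\alpha^m$ has $u$-degree $\geq m$ and lies in $\ideal^\Uc_N$ for $m>N$; thus in $\Uc(\kfrakpar)^N$ the exponential is a finite sum $\omega_{\alpha,N}=\sum_{m=0}^{N}\tfrac{1}{m!}X_\alpha^m$, with inverse $\exp(-X_\alpha)$, and $\adj_{X_\alpha}$ is nilpotent on $\kfrakpar^N$, so $\exp(\adj_{X_\alpha})=\omega_{\alpha,N}(\,\cdot\,)\omega_{\alpha,N}^{-1}$ is a well-defined automorphism of $\kfrakpar^N$. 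For the first equation of \eqref{eq:WeylGroup}, I would rewrite $\rho^N(\Ec^\beta_n)$ in loop-algebra form: the coefficients $a^{(n)}_k$ of Proposition \ref{prop:LieAlgHomo_rho} package the expansion of $t(u)^n$, so together with $P^\beta_{k,n}=(E^\beta+(-1)^kE^{-\beta})\otimes u^k\otimes r^n$ and $t(-u)=t(u)^{-1}$ one obtains, in $\kfrakpar^N$, $\rho^N(\Ec^\beta_n)=\bigl(E^\beta\otimes t(u)^n+E^{-\beta}\otimes t(u)^{-n}\bigr)\otimes r^n$. Using $[H^\alpha,E^{\pm\beta}]=\pm(\alpha|\beta)E^{\pm\beta}$ we get $\adj_{X_\alpha}\bigl(E^{\pm\beta}\otimes t(u)^{\pm n}\otimes r^n\bigr)=\mp(\alpha|\beta)\log t(u)\cdot\bigl(E^{\pm\beta}\otimes t(u)^{\pm n}\otimes r^n\bigr)$, so summing the exponential series replaces $t(u)^{\pm n}$ by $t(u)^{\pm n}\,t(u)^{\mp(\alpha|\beta)}=t(u)^{\pm(n-(\alpha|\beta))}$; adding the two summands reproduces $\rho^N(\Ec^\beta_{n-(\alpha|\beta)})$.

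The second equation is immediate, since $\rho^N(\Hc^\beta_n)$ is $H^\beta$ tensored with a power series in $u$ and $[H^\alpha,H^\beta]=0$, so $\adj_{X_\alpha}$ annihilates it. The main obstacle I anticipate is organisational rather than conceptual: carefully establishing the loop-algebra rewriting of $\rho^N$, keeping track of the $\mathcal C_2$-tag, and controlling the truncation of the $u$-series modulo $\ideal_N$ while exponentiating. Once $\rho^N(\Ec^\beta_n)=\bigl(E^\beta\otimes t(u)^n+E^{-\beta}\otimes t(u)^{-n}\bigr)\otimes r^n$ is in hand, the remaining content is the scalar identity $e^{-(\alpha|\beta)\log t(u)}=t(u)^{-(\alpha|\beta)}$.
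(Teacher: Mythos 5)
Your proposal is correct and reaches the conclusion by a genuinely different route than the paper. The paper's proof works coefficient-by-coefficient: it computes $\adj(\Qp^\alpha_{2k+1,0})^m P^\beta_{l,a}=(\alpha|\beta)^m P^\beta_{l+(2k+1)m,a}$, sums the exponential series into generating-function coefficients $c^{\alpha,\beta}_k$ for the base cases $n=0$ and $n=-1$, evaluates these separately for each value $(\alpha|\beta)=\pm1,\pm2$, matches against the explicit expansions \eqref{eq:LieHomDouble_Expl}, and then extends to all $n$ by induction using $\Ec^\beta_{N+1}=[\Hc^\beta_1,\Ec^\beta_N]+\Ec^\beta_{N-1}$ and the already-established invariance of $\rho^N(\Hc^\beta_1)$. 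Your argument replaces all of this with the single observation that $\omega_{\alpha,N}=\exp\bigl(H^\alpha\otimes(-\log t(u))\otimes 1\bigr)$, so that on the loop-form $\rho^N(\Ec^\beta_n)=(E^\beta\otimes t(u)^n+E^{-\beta}\otimes t(u)^{-n})\otimes r^n$ conjugation acts by the scalar $t(u)^{\mp(\alpha|\beta)}$ on each summand; this is uniform in $n$, needs no case analysis on $(\alpha|\beta)$ and no induction, and makes transparent \emph{why} the shift is by $(\alpha|\beta)$ (it is the change of loop variable that $\rho$ is built on). The price is the preliminary rewriting of $\rho^N$ in terms of $t(u)^{\pm n}$ (which follows from the paper's characterization of $a^{(n)}_k$ as the coefficients of $t(u)^n$ together with $t(-u)=t(u)^{-1}$, and which you correctly flag as the organizational core) and the need to work momentarily in the ambient algebra $\gfinite\otimes\reals\llbracket u\rrbracket\otimes\mathcal{C}_2$ where the individual summands $E^{\pm\beta}\otimes t(u)^{\pm n}$ live. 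One caveat you inherit from the statement itself rather than introduce: conjugation by $\omega_{\alpha,N}$ preserves the $\mathcal{C}_2$-grading, so your computation produces the tag $r^n$ while $\rho^N(\Ec^\beta_{n-(\alpha|\beta)})$ carries $r^{n-(\alpha|\beta)}$; for odd $(\alpha|\beta)$ these differ. The paper's own proof has exactly the same mismatch (its $c^{\alpha,\beta}_kP^\beta_{k,0}$ versus the $P^\beta_{k,1}$ in \eqref{eq:LieHomDouble_Expl}), and the proposition is only ever applied through $\omega_{\alpha,N}^2$, where the shift $2(\alpha|\beta)$ is even and the issue disappears — so this is a defect of the statement's bookkeeping, not of your argument.
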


\begin{rem}
First, the exponential series in \eqref{eq:WeylGroupElements} truncates at parabolic level $N$. Second, the conjugation group action is the exponentialized adjoint Lie algebra action $\adj$, which is used in the proof.
Third, the Lie algebra homomorphism $\rho$ resp. $\rho^N$ acts group like on the universal enveloping and thus on the group exponentiation of the maximal compact subalgebra. Then, the group element $\omega_{\alpha,\infty}$ (defined properly later) may be interpreted as the Lie algebra homomorphism acting on the group element $w_\alpha$ of $\gaffine$ which acts by
\begin{alignat}{4}
\label{eq:WeylGroup_Alg}
w_\alpha \Ec^\beta_nw_\alpha^{-1} \eq \,\Ec^\beta_{n-(\alpha|\beta)}\, ,
\qquad 
w_\alpha \Hc^\beta_n w_\alpha^{-1} \eq 
\Hc^\beta_n \, . 
\end{alignat} 
The relation of $\omega_{\alpha,N}$ to these group elements provides further inside in our construction but will not be used in this work.
\end{rem}
\begin{proof}
We use that conjugation is the group action associated to the adjoint representation of the Lie algebra. 
The parabolic Heisenberg algebra is abelian which proves the action of $\omega_{\alpha,N}$ on $\rho^N(\Hc_n^\beta)$. If $(\alpha|\beta) = 0$, $\Qp^\alpha_{2k+1,0}$ commutes with $P^\beta_{l,a}$ which proves the action of $\omega_{\alpha,N}$ on $\rho^N(\Ec^\beta_{n})$ for $(\alpha|\beta)=0$. 
 
For $(\alpha|\beta)\neq 0$ the adjoint map of the double parabolic algebra acts by
\begin{align}
\adj(\Qp_{2k+1,0}^\alpha)^m P_{l,a}^\beta = (\alpha|\beta)^m P_{l+(2k+1)m,a}^\beta\,.
\end{align}
For $n=0$ this gives
\begin{align}
\exp\left({\sum_{k\geq 0}\tfrac{2}{2k+1}\adj(\Qp^\alpha_{2k+1,0})}\right) P^\beta_{0,0}  \eq \sum_{k=0}^\infty c^{\alpha,\beta}_{k} P^j_{k,0} 
\end{align}
where
\begin{align}
c^{\alpha,\beta}_{k} = \frac{1}{k!}\left(\frac{\drm}{\drm x}\right)^k e^{\sum_{l\geq 0}\frac{2 (\alpha|\beta)}{2l+1}x^{2l+1}}\Big\vert_{x=0}\, . 
\end{align}
The coefficients $c^{\alpha,\beta}_{k}$ can be solved for $(\alpha|\beta) = \pm 1,\pm 2$. For $(\alpha|\beta) = \pm 2$ the coefficients are 
\begin{align}
c^{\alpha,\beta}_0 =1\qquad
c^{\alpha,\beta}_k = (\pm 1)^k 4k\, .
\end{align}
For $(\alpha,\beta) = \pm 1$ the coefficients $c_k^{\alpha,\beta}$ are
\begin{align}
c_0 = 1\qquad
c_k = (\pm 1)^k 2\, .
\end{align}
By \eqref{eq:LieHomDouble_Expl}, this proves the Proposition for $n=0$. For $n=-1$ and \eqref{eq:LieHomDouble_Expl}, the left hand side evaluates to
\begin{align}
\exp\left({\sum_{k\geq 0}\tfrac{2}{2k+1}\adj(\Qp^\alpha_{2k+1,0})}\right)\left(  P^\beta_{0,1} +2\sum_{k=0}^N (-1)^k  P^\beta_{k,1}\right) \eq \sum_{k=0}^\infty c^{\alpha,\beta}_k P^\beta_{k,1} 
\end{align}
with 
\begin{align}
c^{\alpha,\beta}_k = \frac{1}{k!}\left(\frac{\drm}{\drm x}\right)^k\left(1+2\sum_{l=1}^Nx^l\right) e^{\sum_{l\geq 0}\frac{2 (\alpha,\beta)}{2l+1}x^{2l+1}}\Big\vert_{x=0}\, . 
\end{align}
For $(\alpha,\beta) = \pm 2, \pm 1$ the coefficients are
\begin{align}
(\alpha,\beta) = + 2 \quad &\rightarrow \quad c^{\alpha,\beta}_0 = 1,\qquad c^{\alpha,\beta}_{k} =  2(1+2k^2),\nn\\
(\alpha,\beta) = + 1 \quad &\rightarrow \quad c^{\alpha,\beta}_0 = 1,\qquad c^{\alpha,\beta}_{k} =  4k, \nn\\
(\alpha,\beta) = - 1 \quad &\rightarrow \quad c^{\alpha,\beta}_0 = 1,\qquad
c_k = 0,\nn\\ 
(\alpha,\beta) = - 2 \quad &\rightarrow \quad c^{\alpha,\beta}_0 = 1,\qquad
c^{\alpha,\beta}_k = (- 1)^k 2\, .
\end{align}
With \eqref{eq:LieHomDouble_Expl}, this proves the Proposition for $n = 0$ and $n=-1$. 
Assuming the Proposition holds for $n\leq N\in\nats_0$, then by induction
\begin{align}
\exp\left({\sum_{k\geq 0}\tfrac{2}{2k+1}\adj(\Qp^\alpha_{2k+1,0})}\right) \Ec^\beta_{N+1} 
\aeq
\left[\Hc^\beta_1, \exp\left({\sum_{k\geq 0}\tfrac{2}{2k+1}\adj(\Qp^\alpha_{2k+1,0})}\right) \Ec^\beta_N\right]\nn\\
&+ \exp\left({\sum_{k\geq 0}\tfrac{2}{2k+1}\adj(\Qp^\alpha_{2k+1,0})}\right) \Ec^\beta_{N-1}
\nn\\
\aeq \Ec^\beta_{N+1-(\alpha|\beta)}
\end{align}
the proposition holds for all $N\in\nats_0$. 
The same induction argument applies to $-N\in\nats_0$. 
\end{proof}

In this section we introduced the maximal compact subalgebra, the double parabolic algebra and a Lie algebra homomorphism form the maximal compact subalgebra to the double parabolic algebra. Using the double parabolic algebra, we proved Proposition \ref{prop:IdealRelations} on the generating structure of the ideals $\ideal_N$ and for the double parabolic algebra we provided the action of a certain group element.

\section{Compact subalgebra on basic representation}
\label{sec:ActionOnBasic}
The previous two sections put us in a good place to discuss the action of the maximal compact subalgebra on the basic representation. Therefore, we prove a number of propositions in this section, which are important to analyze the $\kfrak$-structure of the basic representation in the next section. 
 
The maximal compact subalgebra acts on the basic representation by the induced action from the affine algebra. On maximal weights, the action of $\Ec_n^\alpha$ is given by
\begin{align}
\label{eq:Ec_nOnMaximalState}
\Ec_n^\alpha \, e^\gamma\otimes 1 \aeq \epsilon(\alpha,\gamma)\,e^{\gamma+\alpha} \otimes S_{-n-(\gamma,\alpha)-1}(\alphaSet)+\epsilon(\alpha,\gamma)\,e^{\gamma-\alpha} \otimes S_{n+(\gamma,\alpha)-1}(\{-\alpha\})\, .
\end{align}
Proposition \ref{prop:kOnMaximalStates} states two important properties for the action of the compact subalgebra on the basic representation. 

\begin{proposition}
\label{prop:kOnMaximalStates}
The action of the compact subalgebra $\kfrak$ on a maximal state generates the entire basic representation. The action of the compact Heisenberg algebra on a maximal state generates all states with the same $\gfinite$ weight:
\begin{itemize}
\item[a.)] $\qquad \forall\, X \in \basic,\, \gamma \in \Qr \;  \exists\, U\in\Uc(\kfrak)\,
: \; X = U\left( e^\gamma \otimes 1\right)$ 
\item[b.)] $\qquad \forall\, \gamma\in \Qr,\, f\in\reals[\{\alpha\in\Delta\}]\; \exists\, \Fc^{\Hc}_f\in\Uc(\cheisenberg)\, 
:
\; e^\gamma \otimes f= \Fc^{\Hc}_f\left( e^\gamma\otimes 1\right)$.
\end{itemize}
\end{proposition}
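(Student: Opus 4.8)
The plan is to prove part b first and then deduce part a from it, together with a short analysis of how the generators $\Ec^\alpha_n$ move the maximal states around. This ordering is convenient because acting with $\Ec^\alpha_n$ on a general state brings in the right-hand exponential of the vertex operator \eqref{eq:N_Def_VOA_Gamma}, whereas on maximal states, and for the compact Heisenberg generators acting on any state, everything stays inside the polynomial ring.

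For part b I would induct on the degree $d$ of the polynomial $f$. The case $d=0$ is trivial, since $e^\gamma\otimes f=f\cdot(e^\gamma\otimes1)$, so $\Fc^{\Hc}_f=f$ works. For the inductive step, note from \eqref{eq:N_VOARepresentation} that on a state $e^\gamma\otimes g$ the generator $H^\alpha_{-n}$ acts by multiplication with $\alpha_{-n}$ while $H^\alpha_{n}$ ($n\geq1$) acts as the degree-lowering derivative $\alpha_n\cdot$; since $\Hc^\alpha_{-n}=\half(H^\alpha_{-n}-H^\alpha_n)=-\Hc^\alpha_n\in\cheisenberg$ this gives
\begin{align}
2\,\Hc^\alpha_{-n}\bigl(e^\gamma\otimes g\bigr)\aeq e^\gamma\otimes\alpha_{-n}g\,-\,e^\gamma\otimes(\alpha_n\cdot g)\,.
\end{align}
Writing an arbitrary $f$ of degree $d>0$ as a finite sum $f=\sum_j\alpha^{(j)}_{-n_j}g_j$ with $\deg g_j<d$ — possible because $\{\alpha_{-n}\forwhich\alpha\in\Delta,\,n\geq1\}$ spans the degree-one part of $\reals[\{\alpha\in\Delta\}_{-\nats}]$, as $\Delta$ spans $\hfinite$ — the inductive hypothesis supplies operators $\Fc^{\Hc}_{g_j}$ and $\Fc^{\Hc}_{\alpha^{(j)}_{n_j}\cdot g_j}$ in $\Uc(\cheisenberg)$, the second argument having degree $<d$, and hence
\begin{align}
e^\gamma\otimes f\aeq \sum_j\Bigl(2\,\Hc^{\alpha^{(j)}}_{-n_j}\Fc^{\Hc}_{g_j}+\Fc^{\Hc}_{\alpha^{(j)}_{n_j}\cdot g_j}\Bigr)\bigl(e^\gamma\otimes1\bigr)\,.
\end{align}
One may take $\Fc^{\Hc}_f$ to be the bracketed element of $\Uc(\cheisenberg)$. (The same recursion can alternatively be run off the Schur identity \eqref{eq:SchurId}.)

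For part a, I would first show that all maximal states lie in a single $\Uc(\kfrak)$-orbit. In \eqref{eq:Ec_nOnMaximalState} the choice of loop index $n=-(\gamma,\alpha)-1$ makes the first Schur polynomial equal $S_0(\{\alpha\})=1$ and the second equal $S_{-2}(\{-\alpha\})=0$, so $\Ec^\alpha_{-(\gamma,\alpha)-1}\bigl(e^\gamma\otimes1\bigr)=\epsilon(\alpha,\gamma)\,e^{\gamma+\alpha}\otimes1$; since $-\alpha\in\Delta$ as well, this realizes addition or subtraction of an arbitrary root, and because the simple roots generate $\Qr$ one obtains, for any $\gamma,\gamma'\in\Qr$, an operator $U_0\in\Uc(\kfrak)$ (a product of such $\Ec$'s, with the loop index readjusted to the current weight at each step) with $e^{\gamma'}\otimes1=U_0(e^\gamma\otimes1)$. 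Combining with part b, every basis state satisfies $e^{\gamma'}\otimes f=\Fc^{\Hc}_f U_0(e^\gamma\otimes1)$ with $\Fc^{\Hc}_f U_0\in\Uc(\kfrak)$, and forming the corresponding finite linear combination of these operators produces the required $U$ for an arbitrary $X\in\basic$.

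All the computations are elementary; the point needing the most care is the degree induction in part b — making sure the correction term $e^\gamma\otimes(\alpha_n\cdot g)$ genuinely has lower degree so that the strong induction closes, and that one really obtains a single operator $\Fc^{\Hc}_f\in\Uc(\cheisenberg)$ rather than only a spanning statement. For part a the only thing to check is the vanishing of the unwanted Schur term at the chosen loop index, i.e.\ the index arithmetic displayed above.
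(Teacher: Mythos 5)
Your proof is correct, and it reaches the result by a route that is genuinely different from the paper's in part \textit{a}. The index arithmetic you flag as the delicate point checks out: with $n=-(\gamma,\alpha)-1$ the two Schur indices in \eqref{eq:Ec_nOnMaximalState} become $0$ and $-2$, so $\Ec^\alpha_{-(\gamma,\alpha)-1}(e^\gamma\otimes 1)=\epsilon(\alpha,\gamma)\,e^{\gamma+\alpha}\otimes 1$ as claimed, and your degree induction for \textit{b} closes because both $g_j$ and $\alpha^{(j)}_{n_j}\cdot g_j$ have degree strictly below $d$ (the latter is a derivative, hence of degree $\deg g_j-1$), while $\{\alpha_{-n}\}$ does span the degree-one part of each fiber since $\Delta$ spans $\hfinite^{\vee}$. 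The paper instead proves \textit{a} directly by induction on the loop level: it first lowers any maximal state to $e^0\otimes 1$ (the mirror image of your raising step, using $n=-(\gamma|\alpha^j)+1$), then uses that $\basic$ is generated from the highest-weight vector by negative loop generators $T_{-n}$ of the full algebra $\gaffine$ and replaces $T_{-n}=(T_{-n}+\tau(T_{-n}))-\taufinite(T)_n$, the second term strictly lowering the loop level; part \textit{b} is then obtained by restricting that same induction to the Heisenberg generators. You invert the logical order: you prove \textit{b} first by an explicit polynomial-degree induction on the compensator identity $2\Hc^\alpha_{-n}=H^\alpha_{-n}-H^\alpha_n$ (which is the Heisenberg special case of the paper's compensator trick), and then deduce \textit{a} from transitivity of the $\Ec$-action on maximal states together with \textit{b}, which lets you avoid the loop-level induction over arbitrary generators of $\gaffine$ entirely. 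What your route buys is self-containedness and an explicit recursive formula for a generating element $\Fc^{\Hc}_f$ (useful later, e.g.\ for \eqref{eq:HFf=FHf}); what the paper's route buys is a single uniform induction that delivers \textit{a} and \textit{b} simultaneously. Only a cosmetic remark: the product of $\Ec$'s connecting $e^\gamma\otimes 1$ to $e^{\gamma'}\otimes 1$ produces a sign from the cocycle factors, which should be absorbed into $U_0$; since $\epsilon=\pm1$ this is immediate.
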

\begin{proof}
First, we show that every maximal state can be mapped to the highest state by repeated action of $\kfrak$
\begin{align}
\label{eq:MaxState_ToHeighestState}
\forall\, \gamma\in \Qr \;  \exists\, U'\in\Uc(\kfrak): \; e^0\otimes 1 = U'\left(e^\gamma\otimes 1\right).
\end{align}
W.l.o.g. assume $\gamma = \sum_{i=1}^r k_i \alpha^i$ with $k_i\geq 0$. Choose $1\leq j\leq r$ s.t. $k_j\neq 0$ and $n\eq -(\gamma|\alpha^j)+1<0$. Such $j$ exists because $(\cdot|\cdot)$ is positive definite. 
Then, by equation \eqref{eq:Ec_nOnMaximalState} 
\begin{align}
\Ec_n^{\alpha^j}\,  e^\gamma\otimes 1 = \epsilon(\gamma,\alpha^j)  e^{\gamma-\alpha^j}\otimes 1. 
\end{align}
The procedure repeats for $\gamma-\alpha^j$ and iteratively maps $e^{\gamma}\otimes 1\in\basic$ onto the highest-weight $e^0\otimes 1$. It works analogous for $k_i\in\ints$. This shows \eqref{eq:MaxState_ToHeighestState}.

By induction in the loop level of a state $X\in\basic$, we show that the basic representation is generated by the action of $\kfrak$ on the highest-weight $\basic = \Uc(\kfrak)\,e^0\otimes 1$. Assume that for all $X\in\basic$ with $\drm X = N'X$ for $N'\leq N\in\nats$ holds $X\in\Uc(\kfrak)\, e^0\otimes 1$.

Let $Y\in\basic$ s.t. $\drm Y = (N+1)Y$. Because $\basic$ is generated by the action of $\{T_n\in\gaffine\forwhich n\leq -1\}$ on the highest state it exists an $Y'\in\basic$, $n\in\nats$ with $\drm Y'=(N-n+1)Y'$ s.t. $Y = T_{-n} Y'$. Then, by assumption
\begin{align}
Y \eq (T_{-n}+\tau(T_{-n})) Y' - \taufinite(T)_nY' \,\in\, \Uc(\kfrak)\,e^0\otimes 1
\end{align} 
This proves \textit{a}. To prove \textit{b}, we use that $\Set{H_n^\alpha\forwhich n\leq -1,\, \alpha\in\Delta}$ on a maximal state spans all states with the same $\gaffine_0$. Then, the same induction proves \textit{b}.
\end{proof}

By Proposition \ref{prop:kOnMaximalStates}, for a state $e^\gamma\otimes f(\{\alpha\in\Delta\})$ exists generating elements $\Fc^{\Hc}_f\in\Uc(\cheisenberg)$, which generates this state from the action on the maximal state $e^\gamma\otimes 1$. By the very definition of the generating elements, they satisfy for each $n\in\nats$ and $\alpha \in \Delta$ 
\begin{align}
\label{eq:HFf=FHf}
\Hc_n^\alpha \Fc^{\Hc}_f \eq \Fc^{\Hc}_{\Hc_n^\alpha f} \,.
\end{align}

Let us also introduce a short notation for the generating element $\Fc^{\Hc}_{S_n}$ of Schur polynomials, because the Schur polynomials are essential in the vertex operator algebra. The generating elements of the Schur polynomials are called \textit{$\Hc$-Schur polynomials} and are defined by
\begin{align}
\label{eq:H_SchurPolynomial_Def}
S^{\Hc}_n(\{\Hc^\alpha\}) = \Fc^{\Hc}_{S_n(\{\alpha\})} \in\Uc(\cheisenberg).
\end{align}
The action of a $\Hc$-Schur polynomial on a maximal state generates the state with the same $\gfinite$ weight and the respective Schur polynomial.
The $\Hc$-Schur polynomials satisfy a recursion identity
\begin{align}
\label{eq:SchurInductionFormula}
S^{\Hc^\alpha}_N(\{\Hc^\alpha\}) \eq  \frac{2}{N}\sum_{n=1}^N \left(\Sh_{N-2n}(\{\Hc^\alpha\})-\Hc_n S_{N-n}(\{\Hc^\alpha\}) \right)\,  
\end{align}
which derives from adding and subtracting $H^\alpha_{n}$ to
\eqref{eq:SchurId} and which is used for proofs by induction.
We prove the next lemma by induction on this identity. The lemma provides a closed expression for the $\Hc$-Schur polynomials and will be used to prove the last proposition in this section.
\begin{lemma}
\label{lem:SchurH}
For every $\alpha\in\Delta$, the $\Hc$-Schur polynomials are given in the expansion
\begin{align}
\label{eq:SchurH}
S_{n}^{\Hc}(\{\Hc^\alpha\}) \eq \sum_{m=0}^{\left\lfloor\frac{n}{2}\right\rfloor} 
\exp{\left(\sum_{l\geq 1} -\tfrac{2}{l}\Hc_l^\alpha x^l\right)}\Big|_{x^{n-2m}}\,.
\end{align}
\end{lemma}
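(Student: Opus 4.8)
The plan is to prove \eqref{eq:SchurH} by strong induction on $n$, using the recursion \eqref{eq:SchurInductionFormula} for the $\Hc$-Schur polynomials. First I would set up a generating-function reformulation of the claimed identity: writing $G^\alpha(x) = \exp\bigl(\sum_{l\geq 1}-\tfrac{2}{l}\Hc^\alpha_l x^l\bigr)$, the right-hand side of \eqref{eq:SchurH} is $\sum_{m=0}^{\lfloor n/2\rfloor} G^\alpha(x)\big|_{x^{n-2m}}$, i.e. the coefficient-extraction $\sum_{k\geq 0,\,k\equiv n\ (2),\ k\leq n} G^\alpha(x)|_{x^k}$. Equivalently, packaging all $n$ together, the claim is that $\sum_{n\geq 0} z^n S^{\Hc}_n(\{\Hc^\alpha\}) = \frac{1}{1-z^2}\,G^\alpha(z) = \frac{1}{1-z^2}\exp\bigl(\sum_{l\geq 1}-\tfrac{2}{l}\Hc^\alpha_l z^l\bigr)$. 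I expect this closed generating-function form to be the cleanest thing to verify, and I would state it as the real content of the lemma.

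The main step is then to check that the generating function $F(z) \coloneqq \frac{1}{1-z^2}G^\alpha(z)$ satisfies the same recursion as $\sum_n z^n S^{\Hc}_n$. Concretely, \eqref{eq:SchurInductionFormula} reads $N S^{\Hc}_N = 2\sum_{n=1}^N S^{\Hc}_{N-2n} - 2\sum_{n=1}^N \Hc^\alpha_n S^{\Hc}_{N-n}$; multiplying by $z^N$ and summing over $N\geq 1$ turns the left side into $z F'(z)$ and the right side into $\frac{2z^2}{1-z^2}F(z) - 2\bigl(\sum_{n\geq 1}\Hc^\alpha_n z^n\bigr)F(z)$ (using that the two convolution sums are products with $\sum_{n\geq 1}z^{2n}$ and $\sum_{n\geq 1}\Hc^\alpha_n z^n$ respectively, together with $S^\Hc_0 = 1$). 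So I must verify the ODE
\begin{align}
\label{eq:SchurH_ODE}
z F'(z) \eq \frac{2z^2}{1-z^2}F(z) - 2\Bigl(\sum_{n\geq 1}\Hc^\alpha_n z^n\Bigr)F(z)\,.
\end{align}
Plugging in $F = (1-z^2)^{-1}G^\alpha$ and using $zF'/F = \frac{2z^2}{1-z^2} + z(\log G^\alpha)'$ together with $z(\log G^\alpha(z))' = -2\sum_{l\geq 1}\Hc^\alpha_l z^l$ (immediate from the definition of $G^\alpha$), both sides of \eqref{eq:SchurH_ODE} match identically. Checking the base cases $n=0,1$ (where $S^\Hc_0 = 1$ and $S^\Hc_1 = \Hc^\alpha_1$, consistent with \eqref{eq:H_SchurPolynomial_Def} applied to $S_0 = 1$, $S_1 = \alpha_{-1}$ and the Heisenberg action \eqref{eq:SchurId}) pins down the unique solution, so $F(z) = \sum_n z^n S^\Hc_n$, which is exactly \eqref{eq:SchurH} after extracting the coefficient of $z^n$.

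The step I expect to be the main obstacle is purely bookkeeping rather than conceptual: one must be careful that the recursion \eqref{eq:SchurInductionFormula} is only asserted for $\alpha$-fixed $\Hc$-Schur polynomials and that products like $\Hc^\alpha_n S^{\Hc}_{N-n}(\{\Hc^\alpha\})$ are to be read inside $\Uc(\cheisenberg)$, where all the $\Hc^\alpha_l$ commute (this is part of \eqref{eq:Compact_Subalg_Commu}); only because this subalgebra is abelian does the naive generating-function manipulation of formal power series with operator coefficients go through without reordering issues. A secondary subtlety is the truncation of the sums at $n=N$ in \eqref{eq:SchurInductionFormula}: since $S^\Hc_{N-2n}$ and $S^\Hc_{N-n}$ vanish for negative index, extending the sums to $n\geq 1$ is harmless and is what makes the convolution identities clean. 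Once these points are noted, the argument is a few lines of formal-power-series algebra, and I would present it in that order: (i) restate the lemma as the generating-function identity $F(z) = \sum_n z^n S^\Hc_n$; (ii) derive the ODE \eqref{eq:SchurH_ODE} from \eqref{eq:SchurInductionFormula}; (iii) verify $F = (1-z^2)^{-1}G^\alpha$ solves it with the correct initial term; (iv) extract coefficients to recover \eqref{eq:SchurH}.
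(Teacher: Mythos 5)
Your proof is correct, and it takes a genuinely different route from the paper's. The paper proves the lemma by a direct strong induction on the coefficient expansion of Remark \ref{rem:SchurInHc}: it substitutes the inductive hypothesis into the recursion \eqref{eq:SchurInductionFormula} and tracks Heaviside and Kronecker-delta factors term by term (see also the expanded calculation in Appendix \ref{app:Appendix_Details}). You instead package the same recursion as a first-order ODE for the generating function $F(z)=\sum_n z^n S^{\Hc}_n$ and verify that $(1-z^2)^{-1}\exp\bigl(\sum_{l\geq 1}-\tfrac{2}{l}\Hc^\alpha_l z^l\bigr)$ solves it with the correct constant term; since the ODE determines $F_N$ for $N\geq 1$ from lower coefficients (both multipliers of $F$ on the right-hand side have vanishing constant term), uniqueness is immediate. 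Both arguments rest on the same two ingredients — the recursion \eqref{eq:SchurInductionFormula} and commutativity of $\cheisenberg$ — but your version replaces the combinatorial bookkeeping with a few lines of formal-power-series algebra and makes the prefactor $\sum_m(\cdot)|_{x^{n-2m}} \leftrightarrow (1-z^2)^{-1}$ transparent; the paper's version has the advantage of directly producing the explicit coefficient formula \eqref{eq:SchurInHc} that is used later. One small correction: your stated base value $S^{\Hc}_1=\Hc^\alpha_1$ is wrong — by \eqref{eq:SchurId} and $H^\alpha_{-1}=-2\Hc^\alpha_1$ on maximal states one gets $S^{\Hc}_1=-2\Hc^\alpha_1$, as in \eqref{eq:Schur_H_Examples} — but this does not affect your argument, since only the $n=0$ initial condition is needed (the recursion with its $1/N$ prefactor already determines $S^{\Hc}_1$ from $S^{\Hc}_0$, and your closed form reproduces $-2\Hc^\alpha_1$ correctly).
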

\begin{rem}
\label{rem:SchurInHc}
Evaluating the generating series in Lemma \ref{lem:SchurH} gives the explicit form in terms of the sets $\mathcal{K}_n \eq \Set{(k_1,\ldots k_n) \in \nats^n_0\forwhich \sum_{l=1}^n lk_l = n}$ 
\begin{align}
\label{eq:SchurInHc}
S_{n}^{\Hc}(\{\Hc^\alpha\}) \aeq \sum_{m=0}^{\left\lfloor\frac{n}{2}\right\rfloor} \sum_{k \in \mathcal{K}_{n-2m}} \prod_{l=1}^{n-2m}\frac{1}{k_l!}\left(-\frac{2}{l}\Hc^\alpha_l\right)^{k_l}\\
\aeq\sum_{M=0}^n 
\sum_{m_1=1}^{n}\sum_{m_2=1}^{n-m_1}\ldots 
\sum_{m_{M}=1}^{n-m_1-\dots-m_{M-1}}
r_n(m_1,\ldots,m_M)\,\prod_{i=1}^{M}\left(-\frac{2}{m_i}\Hc_{m_i}^\alpha\right)\nn
\end{align}
where $r_n(m_1,\dots,m_M)$ is the factorial factor from the exponential. If $k_n$ is the multiplicity of the integer $n$ appearing in the set $\{m_1,\dots, m_M\}$ then, $r$ is given by
\begin{align}
r_n(m_1,\dots,m_M)\aeq
\begin{cases}
\prod_{i\geq 1}\frac{1}{k_i!} & \textrm{if }n+\sum_{i=1}^M m_i \ \textrm{is even}\\
0 & \textrm{if }n+\sum_{i=1}^M m_i \ \textrm{is odd}\, .
\end{cases}
\end{align}
\end{rem}

\begin{proof}
We prove the expansion in \eqref{eq:SchurInHc} by induction using \eqref{eq:SchurInductionFormula}.
For $n = 0,1$ the identity holds since for $\gamma \in \Qr$
\begin{align}
S_0^{\Hc}(\{\Hc^\alpha\})e^{\gamma}\otimes 1 = e^{\gamma}\otimes 1\qquad
S_1^{\Hc}(\{\Hc^\alpha\})e^{\gamma}\otimes 1 = -2\Hc_1^\alpha e^{\gamma}\otimes 1 = e^{\gamma}\otimes S_1(\{\alpha\})\,. 
\end{align}
For $k\in\Kc_M$ set $d_k = \prod_{l=1}^{M} \frac{1}{k_l!}(-\tfrac{2}{l})^{k_l}$ and assume \eqref{eq:SchurInHc} is true for $N'\leq N\in\nats_0$. We find
\begin{align}
\label{eq:ProofLemSH}
\Sh_{N+1}(\{\Hc^\alpha&\}) 
\eq \frac{2}{N+1}\sum_{n=1}^{N+1} \left(\Sh_{N-2n+1}(\{\Hc^\alpha\})-\Hc^\alpha_n\Sh_{N-n+1}(\{\Hc^\alpha\}) \right)\nn\\
\aeq\sum_{p\geq 0}\sum_{k\in\Kc_{p}}\delta_{\modulo{(p+N+1)}{2},0}\sum_{n=1}^{N+1} \frac{2}{N+1}\left(\Theta(N-2n+1-p) 
 -\frac{nk_n}{2}\right)d_k\prod_{l=1}^p(\Hc^\alpha_l)^{k_l}
\nn\\
\aeq
\sum_{p\geq 0}\sum_{k\in\Kc_{p}}\delta_{\modulo{(p+N+1)}{2},0}\Theta(N+1-p) d_k\prod_{l=1}^p(\Hc^\alpha_l)^{k_l}\, ,
\end{align}
where $\Theta$ is the Heaviside function.
This proves the Lemma. A detailed calculation is in Appendix \ref{app:Appendix_Details}. 
\end{proof}
Lemma \ref{lem:SchurH} provides an explicit formula for the $\Hc$-Schur polynomials. The Lie algebra homomorphism $\rho^N$ maps the $\Hc$-Schur polynomials to elements in the universal enveloping of the parabolic Heisenberg algebra. We call these elements the $\Qp$-Schur polynomials. For the $\Qp$-Schur polynomials we prove an important proposition. However, in order to do so, we first prove a lemma and then define the $\Qp$-Schur polynomials properly.
\begin{lemma}
\label{lem:p_Weyl}
For each $N\in\nats_0$, the elements $p^\alpha_{N,0}$ and $p^\alpha_{N,1}$ defined by 
\begin{multicols}{2}
\noindent
\begin{alignat}{4}
&p^\alpha_{N,0}(n)\;\colon\ &&2\ints\ &&\rightarrow \quad  &&\Uc(\pheisenberg)\nn\\
& && \ n&& \mapsto && -\frac{2}{n}\rho^N(\Hc^\alpha_n)\, ,\nn
\end{alignat}
\begin{alignat}{4}
\label{eq:p(n)_Definition}
&p^\alpha_{N,1}\;\colon\ &&2\ints+1\ &&\rightarrow \quad  &&\Uc(\pheisenberg)\nn\\
& && \quad n&& \mapsto && -\frac{2}{n}\rho^N(\Hc^\alpha_{n})\, ,\nn
\end{alignat}
\end{multicols}
\noindent
are polynomials in $n$ taking values in the parabolic Heisenberg algebra. At $n=0$, the polynomial $p^\alpha_{N,0}(0)$ satisfies
\begin{align}
p^\alpha_{N,0}(0) \eq \prod_{l=0}^{\left\lfloor\frac{N}{2}\right\rfloor} \frac{4}{2l+1}\Qp_{2l+1,0}^\alpha\, .
\end{align}
\end{lemma}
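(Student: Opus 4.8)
The plan is to read off both claims directly from the explicit form of the Lie algebra homomorphism $\rho$ in Proposition \ref{prop:LieAlgHomo_rho}. Recall $\rho^N(\Hc^\alpha_n) = \half\sum_{k=0}^{\lfloor N/2\rfloor} a^{(n)}_{2k+1}\,\Qp^\alpha_{2k+1,n}$, and that by the definition \eqref{eq:CommPQ} one has $\Qp^\alpha_{2k+1,n} = \Qp^\alpha_{2k+1,\,\modulo{n}{2}}$ since the third tensor factor is $r^n$ with $r^2=1$. Hence for $n$ even, $-\tfrac2n\rho^N(\Hc^\alpha_n) = -\tfrac1n\sum_{k=0}^{\lfloor N/2\rfloor} a^{(n)}_{2k+1}\,\Qp^\alpha_{2k+1,0}$, and for $n$ odd the same expression with $\Qp^\alpha_{2k+1,1}$. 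By Remark \ref{rem:Coef_ank_Polynomials}, for each fixed $k$ the coefficient $a^{(n)}_{2k+1}$ is a polynomial in $n$ of degree $2k+1$; moreover the formula for $a^{(n)}_{2k+1}$ carries the factor $\sign(n)$ together with the sum $\sum_{\ell=0}^{n-1}\binom{2n}{2\ell+1}\binom{k-\ell+n-1}{k-\ell}$, and one checks that this whole expression is an odd polynomial in $n$ (the odd-polynomial property is exactly what makes $a^{(n)}_{2k+1}/n$ a genuine polynomial rather than merely a rational function, absorbing the apparent pole at $n=0$). Dividing by $n$ therefore yields a polynomial in $n$ of degree $2k$, so $p^\alpha_{N,0}$ and $p^\alpha_{N,1}$ are polynomial-valued in $n$, taking values in the span of the $\Qp^\alpha_{2l+1,0}$ resp. $\Qp^\alpha_{2l+1,1}$, i.e.\ in $\Uc(\pheisenberg)$. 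This settles the first assertion.

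For the value at $n=0$, I would evaluate $\lim_{n\to 0} -\tfrac1n a^{(n)}_{2k+1}$ for each $k$. Writing $a^{(n)}_{2k+1} = n\,b_k(n)$ with $b_k$ a polynomial of degree $2k$, the value $p^\alpha_{N,0}(0)$ is $\tfrac12\sum_{k=0}^{\lfloor N/2\rfloor} b_k(0)\,\Qp^\alpha_{2k+1,0}$. The coefficient $b_k(0)$ is computed from the leading term of $a^{(n)}_{2k+1}$ near $n=0$: only the $\ell=0$ summand of $\sum_{\ell=0}^{n-1}$ contributes to linear order in $n$, giving $a^{(n)}_{2k+1}\approx -2\sign(n)\binom{2n}{1}\binom{k+n-1}{k}\big|_{\text{linear in }n} = -2\sign(n)\cdot 2n\cdot\binom{k-1}{k}\cdots$; I would instead argue more cleanly by going back to the generating function. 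Indeed, the $a^{(n)}_j$ are the Taylor coefficients of $t(u)^n = \big(\tfrac{1-u}{1+u}\big)^n$, so $\sum_{j}a^{(n)}_j u^j = e^{n\log\frac{1-u}{1+u}}$, and differentiating in $n$ at $n=0$ gives $\sum_j \partial_n a^{(n)}_j\big|_{n=0}\,u^j = \log\frac{1-u}{1+u} = -2\sum_{l\ge0}\tfrac{1}{2l+1}u^{2l+1}$. Hence $\partial_n a^{(n)}_{2k+1}\big|_{n=0} = -\tfrac{2}{2k+1}$ and $a^{(0)}_j=\delta_{j,0}$, so $b_k(0) = \partial_n a^{(n)}_{2k+1}\big|_{n=0} = -\tfrac{2}{2k+1}$. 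Therefore
\begin{align}
p^\alpha_{N,0}(0) \eq -\tfrac12\sum_{k=0}^{\left\lfloor\frac{N}{2}\right\rfloor} \tfrac{2}{2k+1}\,\Qp^\alpha_{2k+1,0}\, .
\end{align}

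This differs from the product form $\prod_{l=0}^{\lfloor N/2\rfloor}\tfrac{4}{2l+1}\Qp^\alpha_{2l+1,0}$ asserted in the statement, so the remaining — and genuinely delicate — step is to reconcile the two: since the $\Qp^\alpha_{2l+1,0}$ all commute (the parabolic Heisenberg algebra is abelian), I would interpret $p^\alpha_{N,0}(0)$ inside $\Uc(\pheisenberg)$ and use the recursion for $\Hc$-Schur polynomials together with Lemma \ref{lem:SchurH}: applying $\rho^N$ to $\Sh_n(\{\Hc^\alpha\})$ produces the $\Qp$-Schur polynomials, and the $n=0$ specialization of $p^\alpha_{N,0}$ is precisely the object appearing as the top-degree piece in that expansion, where the exponential generating function $\exp(\sum_{l\ge1}-\tfrac2l\Hc^\alpha_l x^l)$ of Lemma \ref{lem:SchurH} maps under $\rho^N$ to $\exp(\sum_{l\ge 0}-\tfrac{2}{2l+1}\Qp^\alpha_{2l+1,0}x^{2l+1})$, and it is the exponential of the linear expression above — whose expansion, term by term in the commuting generators, reproduces the claimed product-like normalization once one tracks the factorial factors $r_n$. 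The main obstacle is thus bookkeeping: correctly matching the factors of $2$, the $\half$ in $\rho$, and the truncation at parabolic level $N$ (i.e.\ $k\le\lfloor N/2\rfloor$) between the generating-function computation and the product expression, and verifying that the apparent pole of $a^{(n)}_{2k+1}/n$ at $n=0$ is genuinely removable, which is guaranteed by $a^{(0)}_{2k+1}=0$ for $k\ge 0$.
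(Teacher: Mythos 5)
Your overall strategy is the same as the paper's: polynomiality of $n\mapsto a^{(n)}_{2k+1}$ (Remark \ref{rem:Coef_ank_Polynomials}) together with $a^{(0)}_{2k+1}=0$ makes the apparent pole of $-\tfrac2n\rho^N(\Hc^\alpha_n)$ removable, and the value at $n=0$ is read off from the M\"obius generating function. That part is sound. The problem is that your evaluation at $n=0$ carries a normalization error and you then explicitly leave the reconciliation with the stated formula open, so the proof is not complete. The correct normalization is $\sum_k a^{(n)}_k u^k = 2\,t(u)^n$, not $t(u)^n$: at $n=0$ the explicit formula gives $a^{(0)}_{2k}=2\delta_{k,0}$ (consistent with the prefactor $\tfrac12$ in \eqref{eq:LieHomDouble} and $\rho(\Ec^\alpha_0)=P^\alpha_{0,0}$), so equivalently $\sum_k a^{(n)}_{2k+1}u^{2k+1}=t(u)^n-t(u)^{-n}$. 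Hence $\partial_n a^{(n)}_{2k+1}\big|_{n=0}=2\log t(u)\big|_{u^{2k+1}}=-\tfrac{4}{2k+1}$, not $-\tfrac{2}{2k+1}$. Moreover, writing $a^{(n)}_{2k+1}=n\,b_k(n)$, the definition gives $p^\alpha_{N,0}(n)=-\tfrac2n\cdot\tfrac12\sum_k a^{(n)}_{2k+1}\Qp^\alpha_{2k+1,0}=-\sum_k b_k(n)\,\Qp^\alpha_{2k+1,0}$, so the value at zero is $-\sum_k b_k(0)\,\Qp^\alpha_{2k+1,0}=\sum_{k=0}^{\lfloor N/2\rfloor}\tfrac{4}{2k+1}\Qp^\alpha_{2k+1,0}$, not $\tfrac12\sum_k b_k(0)\,\Qp^\alpha_{2k+1,0}$ as you assembled it; your final expression is off by a factor of $-4$.

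Once that is corrected, your method lands exactly on the intended answer, because the $\prod$ in the statement is a typo for a sum. This is visible from the paper's own proof, which computes only the coefficient of each individual generator, $p^\alpha_{N,0}(0)\big|_{\Qp^\alpha_{2l+1,0}}=\tfrac{4}{2l+1}$, and from Remark \ref{rem:p_Weyl} together with \eqref{eq:WeylGroupElements}, which force $p^\alpha_{N,0}(0)=\log\bigl((\omega_{\alpha,N})^2\bigr)=\sum_{k}\tfrac{4}{2k+1}\Qp^\alpha_{2k+1,0}$; a genuine product of all the $\Qp^\alpha_{2l+1,0}$ would be a single monomial of parabolic level $\sum_l(2l+1)$ and could not arise as a limit of the elements $-\tfrac2n\rho^N(\Hc^\alpha_n)$, which are linear in the generators. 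Your closing paragraph therefore chases a typographical discrepancy rather than a mathematical one, and as written it does not close the argument --- you yourself label it an unresolved ``delicate step.'' Fix the factor of $2$ in the generating function and the assembly of the $\tfrac12$ and $\tfrac2n$, read the product as a sum, and nothing further is needed.
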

\begin{rem}
\label{rem:p_Weyl}
Therefore, for every $N\in\nats_0$, the group element $(\omega_{\alpha,N})^2$ in \eqref{eq:WeylGroupElements} is
\begin{align}
(\omega_{\alpha,N})^2\eq \sum_{m\geq 0}\frac{1}{m!} \left(p^\alpha_{N,0}(0)\right)^m\, . 
\end{align}
\end{rem}
\begin{proof}
By Remark \ref{rem:Coef_ank_Polynomials}, $\rho^N(\Hc^\alpha_m)$ is a polynomial on the domain $m\in\nats$ taking values in the parabolic algebra. Because $\Hc^\alpha_0=0$, the polynomial $\rho^N(\Hc^\alpha_{2m})$ has a zero at $m=0$. Therefore \eqref{eq:p(n)_Definition} are polynomials as well. 
To evaluate $p^\alpha_{N,0}$ at zero, the Möbius transformation \eqref{eq:MoebiusTrans} is useful 
\begin{align}
\label{eq:proof_pl0}
p^\alpha_{N,0}(0)\Big|_{\Qp_{2l+1,0}^\alpha}= -\frac{1}{m}\rho^N(\Hc_{2m})\Big|_{\Qp_l} \eq - \frac{1}{m} \left(t(u)^{2m}-t(u)^{-2m}\right) \Bigg|_{m=0}\Bigg|_{u^{2l+1}} = \frac{4}{2l+1}\, .
\end{align}
\end{proof}
Now, let us define the $\Qp$-Schur polynomials and the generating elements $\Fc_f^{N,\Qp}$.
For each $N\in\nats_0$ and $f\in\reals[\{\alpha\in\Delta\}]$, the generating elements $\Fc_f^{\Hc}$ are mapped by the Lie algebra homomorphism $\rho^N$ group like to the quotient universal enveloping of the parabolic algebra defined  analogous to \eqref{eq:QuotientParALg} 
\begin{align}
\Fc^{N,\Qp}_f(\QSet) = \rho^N(\Fc^{\Hc}_f(\HSet)) \in \Uc(\pheisenberg)^N\, .
\end{align}
These elements are important to understand $\kfrakpar$-subrepresentation of the basic representation which justifies the additional notation. 
The image of $\rho^N$ on the $\Hc$-Schur polynomials are called \textit{$\Qp$-Schur polynomial}
\begin{align} 
S^{N,\Qp}_n(\{\Qp^\alpha\}) = \rho^N(\Sh_n(\{\Hc^\alpha\})))\in \Uc(\pheisenberg)^N\, .
\end{align} 
By Remark \ref{rem:Coef_ank_Polynomials}, Lemma \ref{lem:SchurH} and Remark \ref{rem:SchurInHc}, the coefficients of $\Uc(\pheisenberg)^N$ in $S^{N,\Qp}_{2n}$ and $S^{N,\Qp}_{2n+1}$ are polynomials in $n\in\nats_0$ of degree equal to the parabolic level of the product of $\Qp^\alpha_{l,a}$. In particular, the maps
\begin{multicols}{2}
\noindent
\begin{alignat}{4}
&\bar S_{\alpha,0}^{N,\Qp}\;\colon\ &&2\nats_0\ &&\rightarrow \quad  &&\Uc(\pheisenberg)^N \nn\\
& && \ n&& \mapsto && S^{N,\Qp}_{n}(\{\Qp^\alpha\})\, ,\nn
\end{alignat}
\begin{alignat}{4}
&\bar S_{\alpha,1}^{N,\Qp}\;\colon\ &&2\nats_0+1\ &&\rightarrow \quad  &&\Uc(\pheisenberg)^N\nn\\
& && \qquad n&& \mapsto && S^{N,\Qp}_{n}(\{\Qp^\alpha\})
\end{alignat}
\end{multicols}
\noindent
are polynomials in $n$ of degree $N$. These polynomials extend naturally to polynomials with the domain extended from $\nats_0$ to $\ints$. While for $n\in\nats$ and $a\in\{0,1\}$, the $\Qp$-Schur polynomials with negative index $S^{N,\Qp}_{-n}=0$ vanish, the extension $\bar S^{N,\Qp}_{a}({-n})$ can be non zero.

We use this definition of \textit{$\Qp$-Schur polynomials} together with Lemma \ref{lem:SchurH} and Lemma \ref{lem:p_Weyl} to prove the next proposition about the extended $\Qp$-Schur polynomials.

\begin{proposition}
\label{prop:ExtendedSchurPoly}
For all $n,N\geq 0$ and $\alpha\in\Delta$, $a\in{0,1}$, the extended $\Qp$-Schur polynomials $\bar S_{\alpha,a}^{N,-\Qp} \in\reals[n]$ satisfy 
\begin{align}
\bar S_{\alpha,a}^{N,-\Qp}(-n) = \bar S_{\alpha,a}^{N,\Qp}(n-2) \omega^2_{\alpha}\, .
\end{align}
\end{proposition}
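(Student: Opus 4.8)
The plan is to prove the identity at the level of each monomial in the $\Qp^\alpha_{l,a}$ that can appear in a $\Qp$-Schur polynomial, and to reduce everything to the two polynomial identities already available: the recursion for $\Hc$-Schur polynomials in \eqref{eq:SchurInductionFormula} (equivalently the closed form of Lemma \ref{lem:SchurH}), and the evaluation of $p^\alpha_{N,0}(0)$ in Lemma \ref{lem:p_Weyl}. The starting observation is that $\omega_\alpha^2$, by Remark \ref{rem:p_Weyl}, is the exponential of $p^\alpha_{N,0}(0) = \prod_{l=0}^{\lfloor N/2\rfloor}\tfrac{4}{2l+1}\Qp^\alpha_{2l+1,0}$ (read as a single product running over the relevant range; more precisely $\omega_\alpha^2 = \sum_{m\ge 0}\tfrac1{m!}(p^\alpha_{N,0}(0))^m$), so conjugating or multiplying by $\omega_\alpha^2$ is a controlled shift inside $\Uc(\pheisenberg)^N$. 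The key structural input is that, by Remark \ref{rem:Coef_ank_Polynomials} and Lemma \ref{lem:SchurH}, for fixed parabolic level the coefficient of a given monomial $\prod_i \Qp^\alpha_{m_i,a}$ in $S^{N,\Qp}_n$ is a genuine polynomial in $n$ of degree $N$, so it suffices to prove the identity for all sufficiently many values of $n$ (then polynomial equality in $n$ forces it everywhere, which in particular gives the extension to negative $n$).

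First I would fix $N$ and $\alpha$, write $\Sh_n$ in the closed form of Lemma \ref{lem:SchurH}, apply $\rho^N$ termwise using $\rho^N(\Hc^\alpha_l) = -\tfrac{l}{2}\,p^\alpha_{N,a}(l)$ (the defining relation \eqref{eq:p(n)_Definition}), and thereby express $\bar S^{N,\Qp}_{\alpha,a}(n)$ as a sum over $m$ from $0$ to $\lfloor n/2\rfloor$ of products of the $p^\alpha_{N,\cdot}(l_i)$ with the factorial weights $r_n(\cdots)$. The heart of the argument is then to push the substitution $n\mapsto -n$ through this sum: the range $0\le m\le \lfloor n/2\rfloor$ becomes, after reindexing, a range that reaches down past $0$, and the terms with $m$ large produce the generator $p^\alpha_{N,0}(0)$ (via the $l=0$ evaluation of Lemma \ref{lem:p_Weyl}) — these are exactly the terms that, resummed, build up the exponential factor $\omega_\alpha^2$, while the remaining terms reorganize into $\bar S^{N,\Qp}_{\alpha,a}(n-2)$. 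Concretely, I expect the cleanest route is induction on $n$ using the recursion \eqref{eq:SchurInductionFormula}: assuming $\bar S^{N,-\Qp}_{\alpha,a}(-n') = \bar S^{N,\Qp}_{\alpha,a}(n'-2)\,\omega^2_\alpha$ for $n' < n$, rewrite the recursion for $\bar S^{N,-\Qp}_{\alpha,a}(-n)$ in terms of $\bar S^{N,-\Qp}_{\alpha,a}(-n+1)$, $\bar S^{N,-\Qp}_{\alpha,a}(-n-1)$ and a $\rho^N(\Hc^\alpha_n)$-term, substitute the inductive hypothesis, and check that the identity for $\bar S^{N,\Qp}_{\alpha,a}(n-2)$ (the same recursion shifted by $2$) matches after the $\omega^2_\alpha$ is commuted past — which is harmless since $\pheisenberg$ is abelian and the shift affects only the loop index $n$, not the $r=0$ sector, so $\omega_\alpha^2$ commutes with everything appearing.

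The main obstacle will be the bookkeeping of the summation ranges under $n\mapsto -n$: one must show that the "extra" terms generated when the naive upper limit $\lfloor n/2\rfloor$ is analytically continued to negative $n$ assemble precisely into the geometric-type series defining $\omega_\alpha^2$, and not into some spurious remainder. This is where Lemma \ref{lem:p_Weyl} does the real work — the value $p^\alpha_{N,0}(0) = \prod_l \tfrac{4}{2l+1}\Qp^\alpha_{2l+1,0}$ is exactly the generator of $\omega_\alpha^2$, so the telescoping of the continued sum is forced by that evaluation. A secondary point to handle carefully is parity: the $r_n$ weights vanish unless $n + \sum_i m_i$ is even, and one must confirm that replacing $n$ by $-n$ (and, on the right, by $n-2$) preserves the parity constraint, which it does since $-n \equiv n \equiv n-2 \pmod 2$. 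Once these two combinatorial checks are in place, polynomiality in $n$ (degree $N$, from Remark \ref{rem:Coef_ank_Polynomials}) upgrades the identity from "all large $n$" to "all $n\in\ints$", completing the proof.
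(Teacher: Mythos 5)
Your outline names the right ingredients (the closed form of Lemma \ref{lem:SchurH}, the evaluation $p^\alpha_{N,0}(0)$ of Lemma \ref{lem:p_Weyl} as the generator of $\omega_\alpha^2$, polynomiality of the coefficients in $n$, the parity constraint), and your first-described route --- push $n\mapsto -n$ through the nested sums, let the boundary terms assemble into $\omega_\alpha^2$ and the rest into a shifted Schur polynomial --- is indeed the shape of the paper's argument. But the proposal stops exactly where the proof lives. The sentence ``the extra terms generated when the upper limit is analytically continued assemble precisely into the series defining $\omega_\alpha^2$'' \emph{is} the proposition; nothing in your write-up makes it precise. The paper does this with the generalized-summation identity $\gsum{k=1}{-n}p(k)=-\sum_{k=0}^{n-1}p(-k)$ of Appendix \ref{app:PolySum}: each of the $s$ even-index nested sums is replaced by an honest sum \emph{plus} a Kronecker delta $\delta_{m_i,0}$; the odd-index sums must be replaced pairwise (this is where the $\lfloor j/2\rfloor$ offsets enter and where one term drops out of the range); the $\delta$-terms are then collected with multiplicity $\binom{s}{s'}$ and a factor $1/s'!$ into $\exp\left(p(0)\right)=\omega_\alpha^2$; and the surviving sums re-index to upper limit $n-1$, which is precisely where the ``$-2$'' in $\bar S(n-2)$ comes from. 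None of this bookkeeping appears in your proposal, and in particular you never account for why the shift is by $2$ rather than $0$ or $1$.

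The route you say you actually prefer --- induction on $n$ via a recursion expressing $\bar S(-n)$ through $\bar S(-n+1)$, $\bar S(-n-1)$ and a single $\rho^N(\Hc^\alpha_n)$ term --- is not available: the only recursion in the paper, \eqref{eq:SchurInductionFormula}, is the full-history relation $S^{\Hc}_N=\tfrac2N\sum_{n=1}^N\bigl(S^{\Hc}_{N-2n}-\Hc_nS^{\Hc}_{N-n}\bigr)$, not a three-term one, and applying any such recursion at negative arguments requires exactly the generalized-sum machinery you are trying to bypass (besides which $\bar S(-n-1)$ is not covered by an induction hypothesis on $n'<n$). Finally, the polynomiality argument cannot close the gap: $\bar S^{N,-\Qp}_{\alpha,a}(-n)$ coincides with an actual $\Qp$-Schur polynomial only for $n\le 0$, while $\bar S^{N,\Qp}_{\alpha,a}(n-2)$ does so only for $n\ge 2$, so there is no range of $n$ on which both sides can be checked without first computing the polynomial extension explicitly --- and that computation is the one the proposal omits.
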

\begin{proof}
The statement is independent of $\alpha\in\Delta$, $a\in\{0,1\}$ and $N\in\nats_0$. Therefore, for the proof, let us drop these indices on the elements of the parabolic Heisenberg algebra, on the $\Qp$-Schur polynomials and on the group element $\omega_{\alpha,N}$. 

Here, let us prove the statement in detail for $n$ even, the proof works the same for $n$ odd and is described at the end.
For the proof, we evaluate the coefficients of elements of $\Uc(\pheisenberg)$ in $\bar{S}^{-\Qp}(-2n)$. To specify these elements in $\Uc(\pheisenberg)$ let $L,L'$ be two finite sets of odd natural numbers and let
\begin{align}
\label{eq:QLQtLDefinition}
\Qp^L = \prod_{l\in L} \Qp_{l',0}\qquad
\Qt^{L'} = \prod_{l\in L'} \Qp_{l',1}\, .
\end{align}
In this notation, we evaluate the coefficient of the product $\Qp^L\Qt^{L'}$ in $\bar S(-2n)$. The element $\bar S(-2n)$ is the polynomial extension from the $\Qp$-Schur polynomials with $n\in\nats$ to $n\in\ints$.
Therefore, by the identity for generalized sums in Appendix \ref{app:PolySum}, $\bar S(-2n)$ has the form of $S(2n)$ but in terms of the generalized sums $\sum \rightarrow \gsum{}{}$ and $n\rightarrow -n$. Therefore, with Lemma \ref{lem:SchurH} and Lemma \ref{lem:p_Weyl} and $s = |L|$, $t=|L'|$ where $t$ is even since $n$ is even, the coefficient of $\Qp^L\Qt^{L'}$ in $\bar S(-2n)^{-\Qp}$ is
\begin{align}
\label{eq:PropCompleteSum}
\bar S^{-\Qp}(-2n)&\Big|_{\Qp^L\Qt^{L'}} 
\eq
(-1)^{s} 
\gsum{m_1=1}{-n}
\gsum{m_2=1}{-n-m_1}
\ldots
\gsum{m_{s+j}=1}{-n-m_1-\dots-m_{s+j-1} + \left\lfloor\frac{j}{2}\right\rfloor}
\ldots
\gsum{m_{s+t}=1}{-n-m_1-\dots-m_{s+t-1} + \frac{t}{2}}
\nn\\ 
&\ r(2m_1,\dots,2m_s,2m_{s+1}+1,\dots,2m_{s+t}+1)\,\prod_{i=1}^{s}p(2m_i)\prod_{i=s+1}^{s+t}p(2m_i-1)\, .
\end{align}
In the summation $\left\lfloor\frac{j}{2}\right\rfloor$ accounts for the $j-1$ number of sums with $s+1\leq i\leq s+t-1$.

Next, we replace the generalized sum with upper limit higher than the lower limit by \eqref{eq:Generalized_Sum_Id}. Therefore, first we focus on the sums $1\leq i\leq s$. The replacement also inverts $m_i\rightarrow -m_i$, but because $p(-2m_i) = p(2m_i)$ it is sufficient to replace the sums by
\begin{align}
\label{eq:Prf_Prop_GenS_Repl_Even}
\gsum{m_i=1}{-n-m_1-\dots -m_{i-1}} \ \rightarrow \ - 
\sum_{m_i=1}^{n-m_1-\dots -m_{i-1}-1} \; -\; \delta_{m_i,0}\,.
\end{align}

Second, we replace the sums $s+1\leq i\leq t$ pairwise with the pairs $i=s+2j-1$ and $i=s+2j$ by \eqref{eq:Generalized_Sum_Id}. Doing this for all sums simultaneously implies also $m_i\rightarrow -m_i$ in the summands and provides 
\begin{align}
\label{eq:Prf_Prop_GenS_Repl_Odd_1}
&\gsum{m_{s+2j-1}=1}{-n-m_1-\dots -m_{s+2j-2}+j-1}\quad
\gsum{m_{s+2j}=1}{-n-m_1-\dots -m_{s+2j-1}+j} 
p(2m_{s+2j-1}-1)p(2m_{s+2j}-1)
\nn\\[2pt]
&\ \rightarrow \ 
\sum_{m_{s+2j-1}=0}^{n-m_1-\dots -m_{s+2j-2}-j+1}
\quad 
\sum_{m_{s+2j}=0}^{n-m_1-\dots -m_{s+2j-1}-j} 
p(-2m_{s+2j-1}-1)p(-2m_{s+2j}-1)\,.
\end{align}
In the first sum in the last expressions, the summand $m_{s+2j-1} = n-m_1-\dots-m_{2+2j-1}-j+1$ gives for the second sum the limits $0\leq m_{s+2j} \leq -1$ which vanishes by \eqref{eq:Generalized_Sum_Id_0} and this term can therefore be removed from the first sum. Shifting all sums with $s+1\leq i\leq s+t$ by $m_i\rightarrow m_i -1$ to start at $m_{i}=1$ and using $p(-m) = p(m)$ results in the replacement of \eqref{eq:Prf_Prop_GenS_Repl_Odd_1} by
\begin{align}
\label{eq:Prf_Prop_GenS_Repl_Odd}
\sum_{m_{s+2j-1}=1}^{n-m_1-\dots -m_{s+2j-1}+j-2}\
\sum_{m_{s+2j}=1}^{n-m_1-\dots -m_{s+2j}+j-1} 
p(2m_{s+2j-1}-1)p(2m_{s+2j}-1)\,.
\end{align}

We insert the replacements in \eqref{eq:Prf_Prop_GenS_Repl_Even} and \eqref{eq:Prf_Prop_GenS_Repl_Odd} in \eqref{eq:PropCompleteSum}. The sum in \eqref{eq:Prf_Prop_GenS_Repl_Even} and the sum in \eqref{eq:Prf_Prop_GenS_Repl_Odd} will contribute to $\bar S^{\Qp}(2N-2)$, the Kronecker delta terms in \eqref{eq:Prf_Prop_GenS_Repl_Even} sum up to $\omega^2$.
To see this, we collect all terms with the same number of Kronecker delta terms $\delta_{0,m_i}$. The terms with $s'\leq s$ number of Kronecker delta terms $\delta_{m_i,0}$ for $i\leq s$ appear with multiplicity choose $s'$ out of $s$. 
Therefore, this rearrangement of sums provides us with 
\begin{align}
\bar S^{-\Qp}(-2n)\Big|_{\Qp^L\Qt^{L'}}
\aeq
\sum_{s'=1}^s\binom{s}{s'}
\sum_{m_{s'+1}=1}^{n-1}\ldots
\sum_{m_{s+j}=1}^{n-1-\dots-m_{s'+j-1}+\left\lfloor\frac{j}{2}\right\rfloor}
\ldots 
\sum_{m_{s+t}=1}^{n-1-\dots-m_{s'+t-1}+\frac{t}{2}}
\nn\\
& r(m_{s'+1},\dots,m_{s+t})\prod_{i=s'+1}^{s}p(2m_i)
\prod_{i=p+1}^{s+t}p(2m_i-1) 
\frac{1}{s'!}\prod_{i=1}^{s'}p(0)\Bigg|_{\Qp^L\Qt^{L'}} 
\end{align}
where a second factor of $(-1)^{s}$ appears from \eqref{eq:Prf_Prop_GenS_Repl_Even}.
The factorial $\frac{1}{s!}$ comes from the $s'$ number of arguments with same $m=0$ in the argument of $r$. The overall projection to $\Qp^L\Qt^{L'}$ is the sum of all possible projections of the different factors. Because $p(2m)$ is a series in $\Qp_{l,0}$ and $p(2m+1)$ is a series in $\Qp_{l,1}$, the sum over all different projections is the sum over all subset decompositions of $L = L(s')\cup L(s-s')$, where $L(s')$ has $s'$ number of elements fixed
\begin{align}
&\prod_{i=s'+1}^{s}p(2m_i)\prod_{i=s+1}^{s+t}p(2m_i-1) \frac{1}{s'!}\prod_{i=1}^{s'}p(0)\Bigg|_{\Qp^L\Qt^{L'}} \nn\\
&\eq
\sum_{L(s-s')\cup L(s')}\left(\prod_{i=s'+1}^{s}p(2m_i)\right)\Big|_{\Qp^{L(s-s')}}\left(\prod_{i=s+1}^{s+t}p(2m_i-1)\right)\Big|_{\Qt^{L'}} \frac{1}{s'!}\left(p(0)\right)^{s'}\Big|_{\Qp^{L(s')}}.
\end{align}
By Lemma \ref{lem:p_Weyl} and Remark \ref{rem:p_Weyl}, the last product with factors $p(0)$ projected on $\Qt^{L'}$ is $\omega^2$ projected on $\Qt^{L'}$. For the other factors of $p(m)$ with $m\neq 0$ we use Lemma \ref{lem:p_Weyl} and pull $\rho$ in front of the entire expression. Then, by \eqref{eq:SchurInHc} equation \eqref{eq:PropCompleteSum} becomes
\begin{align}
\bar S^{-\Qp}&(-2n)\Big|_{\Qp^L\Qt^{L'}} \nn\\
\aeq
\sum_{s'=1}^p \binom{s}{s'}
\sum_{L(s-s'),\, L(s')}
\rho^N\Bigg[
\sum_{m_{s'+1}=1}^{n-1}
\dots
\sum_{m_{s+j}=1}^{n-1-m_{s+j-1}-\left\lfloor\frac{j}{2}\right\rfloor}
\ldots 
\sum_{m_{s+t}=1}^{n-1-m_1-\dots -m_{s+t-1}+\frac{t}{2}}
\nn\\[2pt]
&\quad r(m_{s'+1},\dots,m_{s+t})\left(\prod_{i=s'+1}^{p}\frac{2}{2m_i}\Hc_{2m_i}\prod_{i=s+1}^{s+t}\frac{2}{2m_i-1}\Hc_{2m_i-1}\right) \Big|_{\Qp^{L(s')}\Qt^{L'}}\Bigg] \omega^2 \Big|_{\Qp^{L(s')}}\nn\\[2pt]
\aeq \sum_{s'=1}^p\binom{s}{s'}\sum_{L(s-s'),\, L(s')}\bar S(2(n-1))\Big|_{\Qp^L(s-s')\Qt^{L'}}\omega^2\Big|_{\Qp^L(s')}\eq \bar S(2(n-1))\omega^2\Big|_{\Qp^L\Qt^{L'}}\,.
\end{align}
The proof works analogously for $\bar S^{-\Qp}(-2n-1)$. Then $t$ is odd, and one uses the pair wise replacement of all sums in \eqref{eq:Prf_Prop_GenS_Repl_Odd} for $s+2\leq i\leq s+t$. The sum $s+1$ is replaced individually by \eqref{eq:Prf_Prop_GenS_Repl_Odd} where the upper limit of the sum reduces by $1$ because of \eqref{eq:Generalized_Sum_Id_0}. Then the same steps as for the even case provides $\bar S^{-\Qp}(-2n-1) =\bar S^{-\Qp}(2n-1)\omega^2$. 
\end{proof}

\begin{corollary}
\label{cor:S(-1)}
The extended $\Qp$-Schur polynomials $\bar S^{N,\Qp}_\alpha(n)$ vanish at $n=-1$
\begin{align}
\bar S^{N,\Qp}_\alpha (-1) = 0\,.
\end{align}
\end{corollary}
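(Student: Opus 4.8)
The plan is to show, for odd $n$, that every $\Uc(\pheisenberg)^N$-coefficient of the polynomial $\bar S^{N,\Qp}_{\alpha,1}(n)$ is a nested generalized sum whose polynomial continuation to $n=-1$ vanishes. First I would record the generating identity obtained by summing Lemma \ref{lem:SchurH} over $n$,
\begin{align}
\sum_{n\geq 0} z^n\, \Sh_n(\{\Hc^\alpha\}) \eq \frac{1}{1-z^2}\,\exp\!\Big(-\sum_{l\geq 1}\tfrac{2}{l}\,\Hc^\alpha_l\, z^l\Big)\, ,
\end{align}
so that the part of $\Sh_n(\{\Hc^\alpha\})$ homogeneous of degree $d$ in the $\Hc^\alpha_l$ is the $z^n$-coefficient of $\tfrac{1}{1-z^2}\,\tfrac{1}{d!}\big(-\sum_l\tfrac{2}{l}\Hc^\alpha_l z^l\big)^d$, that is, a sum over $l_1,\dots,l_d\geq 1$ and $p\geq 0$ with $l_1+\dots+l_d+2p=n$.

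Next I would apply $\rho^N$. By Remark \ref{rem:Coef_ank_Polynomials}, $\rho^N(\Hc^\alpha_l)$ is a polynomial in $l$, linear in the generators $\Qp^\alpha_{\bullet,a}$, taking values in $\Qp^\alpha_{\bullet,0}$ for $l$ even and in $\Qp^\alpha_{\bullet,1}$ for $l$ odd. Hence, fixing a monomial $\Qp^L\Qt^{L'}\in\Uc(\pheisenberg)^N$ with $s=|L|$ factors of the first kind and $t=|L'|$ of the second, only $d=s+t$ contributes, with exactly $s$ of the $l_i$ even and $t$ of them odd; for $n$ odd this forces $t$ odd, in particular $t\geq 1$. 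Writing $l_i=2a_i$ for the $s$ even indices (with $a_i\geq 1$) and $l_i=2a_i+1$ for the $t$ odd ones (with $a_i\geq 0$) and eliminating $p$, the coefficient of $\Qp^L\Qt^{L'}$ in $S^{N,\Qp}_n$ becomes an iterated generalized sum in the $s+t$ indices $a_i$, each running from $0$, with the remaining index $p\geq 0$ enforcing $a_1+\dots+a_{s+t}+p=\tfrac{1}{2}(n-t)-s$. Invoking the rules for generalized sums from Appendix \ref{app:PolySum}, peeling these sums off one at a time shows that such an $(s+t+1)$-variable simplex sum is identically zero whenever the value $\tfrac{1}{2}(n-t)-s$ lies in $\{-1,\dots,-(s+t)\}$, since at some stage an empty generalized sum of the type $\gsum{m=0}{-1}$ of \eqref{eq:Generalized_Sum_Id_0} is produced. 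At $n=-1$ this value equals $-\big(\tfrac{t+1}{2}+s\big)$, which indeed lies in $\{-1,\dots,-(s+t)\}$ precisely because $t\geq 1$ is odd — this is the elementary inequality $1\leq \tfrac{t+1}{2}+s\leq s+t$. Together with the fact that the degree-$0$ part of $\Sh_n(\{\Hc^\alpha\})$ equals $1$ for $n$ even and $0$ for $n$ odd, this shows every monomial coefficient of $\bar S^{N,\Qp}_{\alpha,1}$ vanishes at $n=-1$, hence $\bar S^{N,\Qp}_{\alpha,1}(-1)=0$.

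The main obstacle is the index bookkeeping needed to pin down that value: in Proposition \ref{prop:ExtendedSchurPoly} the same manipulation at a generic negative argument leaves the group element $\omega^2_\alpha$ behind — this is exactly the boundary contribution one gets when the value hits $-(s+t+1)$ rather than falling strictly inside the window $\{-1,\dots,-(s+t)\}$ — so one must check that $n=-1$ is not that boundary case, which is where the parity of $t$ enters. An equivalent and slightly shorter route, if preferred, is to rerun the computation in the proof of Proposition \ref{prop:ExtendedSchurPoly} verbatim with the left-hand argument specialized to $-1$ and observe that the outermost generalized sum then reads $\gsum{m=1}{0}$, which vanishes by \eqref{eq:Generalized_Sum_Id_0}.
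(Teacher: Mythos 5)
Your argument is correct and takes essentially the same route as the paper: the paper's proof simply invokes the nested generalized-sum formula \eqref{eq:PropCompleteSum} for odd argument together with the empty-sum identity \eqref{eq:Generalized_Sum_Id_0}, which is precisely the ``shorter route'' you name at the end. Your main derivation is a self-contained expansion of that same mechanism, with the useful explicit observation that oddness of the argument forces $t\geq 1$ (so there is an outermost sum to annihilate) and that the degree-zero term vanishes by parity.
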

\begin{proof}
This follows from \eqref{eq:PropCompleteSum} for odd $n$ and \eqref{eq:Generalized_Sum_Id_0}. It is consistent with Proposition \ref{prop:ExtendedSchurPoly}.
\end{proof}

With the results of this section we are well equipped to analyze the $\kfrak$-structure of the basic representation, which is the subject of the next section.

\section{$\kfrak$-structure of basic representation}
\label{sec:CompleteDecomposition}
In this section we prove Theorem \ref{the:CompleteDecomposition}. For instance we give all $\kfrak$-subrepresentations in the basic representation such that the quotient of the basic representation by the subrepresentation is a finite dimensional $\kfrakpar$-representation. This is the finest $\kfrak$-structure of the basic representations and it allows us to provide the cosocle filtration as an infinite composition series of infinite dimensional $\kfrak$-subrepresentations of the basic representation.

First, we provide an infinite set of finite dimensional $\kfrak$-representations $\verQ(N)$\footnote{We chose the name $\verQ$ since the modules are truncated \textit{Verma} module of the parabolic Heisenberg algebra with generators $\Qp$.} and surjective projections $\Gmap_N :\,\basic \to \verQ(N) \simeq \basic/W_N$, which commute with the action of $\kfrak$ and project the basic representation on the finite dimensional $\kfrak$-representations $\verQ(N)$. This provides an infinite set of $\kfrak$-subrepresentation $W_N=\mathrm{Ker}(G_N)$ of the basic representation as the kernel of the projections.\footnote{\label{ftn:Embeding_Projection}\textbf{Embedding and projection.} For two vector spaces $V,W$ and a homomorphism $\Gmap':\,V\to W$ we say that $V$ is \textit{embedded }in $W$ if $\Gmap'$ is injective and if $\Gmap'$ is surjective we say that $W \simeq V/(\mathrm{Ker}(\Gmap')$ is a \textit{projection} of $V$. If the vector spaces are representations of a Lie algebra $\mathfrak{l}$ and $\Gmap'$ respects the action of $\mathfrak{l}$ we call the embedding (resp. projection) also a $\mathfrak{l}$-embedding (resp. $\mathfrak{l}$-projection). If it is clear from the context we also avoid the symbol $\mathfrak{l}$ and simply write embedding (resp. projection).}

Second, we prove that if the basic representation projects $\kfrak$-covariant on a finite dimensional $\kfrakpar$-representation, there exists an $N\in\nats_0$ such that the $\kfrakpar$-representations is equivalent to $\verQ(N)$ or to a quotient of $\verQ(N)$ by a subrepresentation of $\verQ(N)$.
This is equivalent to the statement that every $\kfrak$-subrepresentation $W\subset \basic$, for which the quotient $\basic/W$ is a finite dimensional $\kfrakpar$-representation by $\rho$, is $W_N$ for some $N\in\nats_0$ or a subrepresentation of $W_N$. Therefore, we may call this set of subrepresentations the finest $\kfrakpar$-structure (and probably the finest $\kfrak$-structure).

Third, we show that the infinite family of $\kfrak$-subrepresentations $(W_N)_{N\in\nats_0}$ is an infinite composition series with cosocle filtration (see Appendix \ref{app:Cosocle_Filtration}). In particular, the finite dimensional quotients $W_{N}/W_{N+1}$ are maximal semisimple resp. $W_{N+1}$ is the radical of $W_N$ and the inverse limit of the chain of $\kfrak$-subrepresentations is trivial.

In Subsection \ref{subsec:Embedding_k_rep} we derive the infinite chain of $\kfrak$-subrepresentations with finite co-dimension which is the cosocle filtration of $\basic$ and we derive the $\kfrak$-projection of the basic representation onto infinitely many finite dimensional $\kfrak$-representations. In Subsection \ref{subsec:Infinite_Limit} we show that the inverse limit of the $\kfrak$-subrepresentations is trivial and therefore the cosocle filtration is an infinite composition series of $\basic$ under $\kfrak$.

\subsection{$\kfrak$-subrepresentations in basic representation}
\label{subsec:Embedding_k_rep}
In this section we provide \textit{all} infinitely many projections of the basic representation onto finite dimensional $\kfrakpar$-representations (See Footnote \ref{ftn:Embeding_Projection} or Appendix \ref{app:Cosocle_Filtration}). Equivalently, we provide \textit{all} infinitely many $\kfrak$-subrepresentations of $\basic$, for which the quotient of $\basic$ by the subrepresentation is a finite dimensional $\kfrakpar$-representation. This set of $\kfrak$-subrepresentations also contains the cosocle filtration of the basic representation.

First, for all $N\in\nats_0$ we construct a finite dimensional $\kfrakpar$-representation $\verQ(N)$, which is also a representations of $\kfrak$ by pulling back with the Lie algebra homomorphism $\rho^N$. Second, we prove that the basic representation can be projected $\kfrak$-covariant onto $\verQ(N)$ for each $N\in\nats_0$ and we give the explicit $\kfrak$-projection map $\Gmap_N$. Third, we show that this infinite set of $\kfrak$-representations and the quotient of these representations by subrepresentations are all finite dimensional $\kfrakpar$-representations on which the basic representation projects with the pull back of $\rho$.  
Fourth, in a corollary we give the infinite descending chain of $\kfrak$-subrepresentations in the basic representation with cosocle filtration.

For each $N\in\nats_0$, we define a representation $\verQfull(N)$ as the tensor product of the universal enveloping algebra of the parabolic Heisenberg algebra $\Uc(\pheisenberg)$ quotiented by the ideal $\ideal^{\Uc}_N$ in \eqref{eq:IdealsIN} and the group algebra of the root lattice with the inclusion $\alpha\mapsto e_\alpha$  
\begin{align}
\label{eq:verQfull}
\verQfull(N) \eq  \frac{\Uc(\pheisenberg)}{\ideal^{\Uc}_N}\otimes\reals[\Qr ]\,.
\end{align}
This definition is reminiscent to the definition of the basic representation but in parabolic generators. In particular, for a fixed $\be_\alpha\in\reals[\Qr ]$, the representation $\verQfull(N)$ restricts to a finite dimensional quotient of a Verma module of the parabolic Heisenberg algebra.

The exponential series $\omega_{\alpha,N}$ in Proposition \ref{prop:WeylGrou} acts by multiplication on $\verQfull(N)$. This is well-defined, because first, only finitely many terms in the exponential series in \eqref{eq:WeylGroupElements} act non-trivially and second, they are given in terms of the parabolic Heisenberg algebra. Next, we consider the group which is generated by $\omega_{\alpha,N}^2$ for $\alpha\in\Delta$. This induces an equivalence relation on $\verQfull(N)$ which is generated by
\begin{align}
\label{eq:Equi_Relation}
(\omega_{\alpha,N})^2 \, \be_\beta \sim \be_{\beta+2\alpha}\, 
\end{align}
and extends straightforward to all other elements of $\verQfull(N)$, because the parabolic Heisenberg algebra commutes with $\omega_{\alpha,N}$.
For each $N\in\nats_0$, this equivalence relation defines a quotient space $\verQ(N)$ from $\verQfull(N)$ by
\begin{align}
\label{eq:erQ}
\verQ(N) \eq \verQfull(N)\big/ \sim\,.
\end{align}

From here on, we always work with the quotient space $\verQ(N)$. The construction of $\verQ(N)$ as a quotient space is particularly fruitful to extend $\verQ(N)$ to a representation of the parabolic algebra $\kfrakpar$. This is subject of the next Proposition.

\begin{proposition}
\label{prop:VerQRepresentation}
For all $N\in\nats_0$, the representation $\verQ(N)$ extends to a representation of the parabolic algebra $\kfrakpar^N$. The element $\Ec_n^\alpha\in\kfrak$ acts on $\be_\gamma$ for $\gamma\in Q$ by
\begin{align}
\label{eq:Action_Ec_on_be_alpha}
\rho^N(\Ec_n^\alpha)\,\be_\gamma 
\aeq \epsilon(\alpha,\gamma) 
\bar S_\alpha^{N,\Qp}\left(-n-(\alpha|\gamma)-1\right)
\be_{\alpha+\gamma}
\end{align}
and the action extends to $\verQ(N)$ by the commutation relations of $\kfrakpar$.
\end{proposition}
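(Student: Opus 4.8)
The plan is to verify that the prescription \eqref{eq:Action_Ec_on_be_alpha}, together with the commutation relations of $\kfrakpar^N$, is consistent on the quotient space $\verQ(N)$, and that it assembles into a well-defined representation. First I would check that the formula respects the equivalence relation \eqref{eq:Equi_Relation}. This is exactly where Proposition \ref{prop:ExtendedSchurPoly} enters: acting with $\rho^N(\Ec_n^\alpha)$ on $\be_{\gamma+2\beta}$ versus on $(\omega_{\beta,N})^2\be_\gamma$ should agree. The shift $\gamma\to\gamma+2\beta$ changes the $\Qp$-Schur argument by $-(\alpha|\beta)$ on the integer $-n-(\alpha|\gamma)-1$ (times $2$, matching the parity), and Proposition \ref{prop:WeylGrou} says that conjugating by $\omega_{\beta,N}$ shifts the loop index of $\rho^N(\Ec^\alpha_n)$ by $-(\alpha|\beta)$; so one must match $\bar S^{N,\Qp}_\alpha$ evaluated at the shifted argument against $\omega_{\beta,N}^{\pm2}$ times $\bar S^{N,\Qp}_\alpha$ at the original argument. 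Proposition \ref{prop:ExtendedSchurPoly} (with Corollary \ref{cor:S(-1)} handling the boundary case) is precisely the identity needed to close this compatibility check.

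Second, I would promote \eqref{eq:Action_Ec_on_be_alpha} from an action on maximal vectors $\be_\gamma$ to an action on all of $\verQ(N)=\Uc(\pheisenberg)^N\otimes\reals[\Qr]/\!\sim$ by declaring that the parabolic Heisenberg generators $\Qp^\alpha_{2k+1,n}$ act by left multiplication in $\Uc(\pheisenberg)^N$ (killing the $\ideal^\Uc_N$ part), and that $\rho^N(\Ec^\alpha_n)$ on a general element $U\be_\gamma$ is computed by commuting $\rho^N(\Ec^\alpha_n)$ past $U$ using the $\kfrakpar^N$ relations until it hits $\be_\gamma$, then applying \eqref{eq:Action_Ec_on_be_alpha}; the analogue of \eqref{eq:Commutation_Ecn_Heisenberg}/\eqref{eq:Comm_PQ} guarantees this rewriting terminates in $\Uc(\pheisenberg)^N$. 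The remaining task is to verify that this definition satisfies the defining relations of $\kfrakpar^N$ — i.e. that $[\rho^N(X),\rho^N(Y)]$ acts as $\rho^N([X,Y])$ — and it suffices to check this on the generating maximal vectors $\be_\gamma$, since the $\Qp$-part is handled by associativity of $\Uc(\pheisenberg)^N$. Concretely one checks: (i) $[\rho^N(\Hc^\beta_m),\rho^N(\Ec^\alpha_n)]$ on $\be_\gamma$ reproduces $\tfrac12(\alpha|\beta)(\rho^N(\Ec^\alpha_{n+m})-\rho^N(\Ec^\alpha_{n-m}))$, which should follow from the recursion \eqref{Schur1}/\eqref{eq:SchurId} satisfied by the (extended) Schur polynomials together with the explicit $\Hc$-Schur formula of Lemma \ref{lem:SchurH}; and (ii) $[\rho^N(\Ec^\alpha_m),\rho^N(\Ec^\beta_n)]$ on $\be_\gamma$ reproduces the right-hand side of the third line of \eqref{eq:Compact_Subalg_Commu}. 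A cleaner route for (i) and (ii): since $\rho^N$ is already known to be a Lie algebra homomorphism into $\kfrakpar^N$ (Proposition \ref{prop:LieAlgHomo_rho}), it is enough to show that \eqref{eq:Action_Ec_on_be_alpha} defines a $\kfrakpar^N$-module structure, and for this one can pull \eqref{eq:Action_Ec_on_be_alpha} back through $\rho^N$ to recognise it as the image under $\rho^N$ of the $\kfrak$-action on maximal states \eqref{eq:Ec_nOnMaximalState}, with the negative-index Schur polynomials on the $e^{\gamma-\alpha}$ term replaced by their polynomial extensions $\bar S^{N,\Qp}_\alpha$ — the two terms of \eqref{eq:Ec_nOnMaximalState} merging into the single term of \eqref{eq:Action_Ec_on_be_alpha} precisely because on the quotient by $\sim$ the state $e^{\gamma-\alpha}$ is identified with $\omega^2$ acting on $e^{\gamma+\alpha}$, which is exactly the content of Proposition \ref{prop:ExtendedSchurPoly}.

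The main obstacle I expect is the bookkeeping in step one: showing that the single closed formula \eqref{eq:Action_Ec_on_be_alpha}, built from the \emph{extended} polynomial $\bar S^{N,\Qp}_\alpha$, simultaneously encodes both terms of the genuine $\kfrak$-action \eqref{eq:Ec_nOnMaximalState} after passing to the quotient $\verQ(N)$, and that the identification used ($e^{\gamma-\alpha}\sim\omega_{\alpha,N}^{2}e^{\gamma+\alpha}$-type moves) is consistent with the group generated by all the $\omega_{\beta,N}^2$. Everything else is a matter of propagating the Heisenberg commutation relations and invoking Lemma \ref{lem:SchurH}, Lemma \ref{lem:p_Weyl}, Proposition \ref{prop:WeylGrou} and Proposition \ref{prop:ExtendedSchurPoly}; the real conceptual input is that Proposition \ref{prop:ExtendedSchurPoly} is exactly the compatibility statement that makes $\verQ(N)$ — rather than merely $\verQfull(N)$ — carry a $\kfrakpar^N$-action.
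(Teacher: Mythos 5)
Your overall strategy coincides with the paper's: (i) compatibility with the equivalence relation via Proposition \ref{prop:WeylGrou}, (ii) extension from the vectors $\be_\gamma$ to all of $\verQ(N)$ through the Heisenberg commutation relations, and (iii) verification of the Lie bracket by recognising \eqref{eq:Action_Ec_on_be_alpha} as the $\rho^N$-image of the genuine action \eqref{eq:Ec_nOnMaximalState} on maximal states, with Proposition \ref{prop:ExtendedSchurPoly} merging the two terms over $e^{\gamma+\alpha}$ and $e^{\gamma-\alpha}$ into the single term $\be_{\gamma+\alpha}$ --- your ``cleaner route'' is exactly the paper's third step. One small correction to step (i): the equivalence-relation check closes using only Proposition \ref{prop:WeylGrou} together with $(\omega_{\beta,N})^2\be_{\alpha+\gamma}\sim\be_{\alpha+\gamma+2\beta}$ and the fact that $\bar S^{N,\Qp}_\alpha$ commutes with $\omega_{\beta,N}^2$; Proposition \ref{prop:ExtendedSchurPoly} is not what closes that check, it is the key input only in step (iii).

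There is, however, one step you omit that is needed for the statement as claimed, namely that $\verQ(N)$ carries a representation of $\kfrakpar^N$ and not merely of $\kfrak$. Formula \eqref{eq:Action_Ec_on_be_alpha} assigns an operator to $\rho^N(\Ec^\alpha_n)$ for every $n\in\ints$, but $\rho^N$ is far from injective on $\kfrak$ modulo $\ideal_N$: infinitely many distinct $\Ec^\alpha_n$ have the same image in $\kfrakpar^N$. To obtain a well-defined action of the individual parabolic generators $P^\alpha_{k,a}$ (by inverting the finitely many relations \eqref{eq:LieHomDouble} for $0\le n\le 2N+1$), one must check that the assignment is constant on the fibres of $\rho^N$, i.e.\ that the elements \eqref{eq:IdealRelations} of Proposition \ref{prop:IdealRelations} act as zero. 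The paper does this explicitly: the binomial sum $\sum_{m=0}^{N+1}(-1)^m\binom{N+1}{m}\,\bar S^{N,\Qp}_\alpha\left(-n-2m-(\alpha|\gamma)-1\right)$ vanishes by \eqref{eq:BinomialId} because $\bar S^{N,\Qp}_\alpha$ is a polynomial of degree $N$ in its argument (up to terms in $\ideal^{\Uc}_N$, which act trivially). Without this check you have only produced a $\kfrak$-module structure; the tools to close the gap are already in your hands, but the step must be stated.
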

\begin{rem}
By this proposition $\verQ(N)$ extends from a representation of $\pheisenberg$ to a representation of the parabolic algebra $\kfrakpar^N$, for which we use the same name $\verQ(N)$. The pull back of $\rho^N$ maps this representation to a representation of $\kfrak$, for which we still use the same symbol $\verQ(N)$ and therefore, we simply write $\Ec_n^\alpha$ for $\rho^N(\Ec_n^\alpha)$, when $\Ec_n^\alpha$ acts on a parabolic representation. 
\end{rem}
\begin{rem}
The state $\be_0\in\verQ(N)$ is a singlet of the maximal compact subalgebra $\kfinite$ of the Lie algebra $\gfinite$ because $\bar S_{\alpha}(-1)=0$ by Corollary \ref{cor:S(-1)}.
\end{rem}
\begin{proof}
The proof contains three parts. The first part shows, that the action \eqref{eq:Action_Ec_on_be_alpha} is well defined on the coset space. The second part shows that \eqref{eq:Action_Ec_on_be_alpha} defines a representation of the parabolic algebra. For example, that $\rho^N(\Ec^\alpha_n)$ can be written in terms of the parabolic generators and $\rho^N$. The third part shows that \eqref{eq:Action_Ec_on_be_alpha} obeys the Lie bracket relation. Then, $\verQ(N)$ extends to a representation of the parabolic algebra.

We show that \eqref{eq:Action_Ec_on_be_alpha} is well defined on the quotient space. By Proposition \ref{prop:WeylGrou} and $\beta \in\Delta$
\begin{align}
\rho^N(\Ec_n^\alpha)(\omega_\beta)^2 \be_{\gamma} \aeq (\omega_\beta)^2\rho^N (\Ec_{n+2(\alpha|\gamma)}^\alpha)\, \be_{\gamma} \eq \epsilon(\alpha,\gamma)\bar S_\alpha^{N,\Qp}(-n-(\alpha|2\beta+\gamma)-1)\be_{\alpha+2\beta+\gamma}\nn\\
\aeq  S_\alpha^{N,\Qp}(-n-(\alpha|2\beta+\gamma))\,\be_{\alpha+2\beta+\gamma} \eq \rho^N(\Ec_n^\alpha)\,\be_{\gamma+2\beta}\,.
\end{align}
Therefore, the action respects the equivalence relation.

Second, we show that \eqref{eq:Action_Ec_on_be_alpha} defines an action of the parabolic algebra $\kfrakpar^N$ consistently. For all $\alpha\in\Delta$ and $0 \leq n \leq 2N+1$ the equation \eqref{eq:Action_Ec_on_be_alpha} defines the action of $\kfrakpar^N$ uniquely. Then, the action of all generators $\Ec_m^\alpha$ for $m\in\ints$ is given in terms of $\rho^N$ and the action of $\kfrakpar^N$. Therefore the action of $\Ec_m^\alpha$ is uniquely given by the action of the elements $\Ec_n^\alpha$ with $0 \leq n\leq 2N+1$ and the ideal relations in Proposition \ref{prop:IdealRelations}. Thus, it remains to show that the ideal relations hold
\begin{align}
\sum_{k=0}^{N+1}&(-1)^k\binom{N+1}{k}\rho^N(\Ec_{m+2k}^\alpha)\,\be_\gamma  \nn\\
\aeq \epsilon(\alpha,\gamma) 
\sum_{k=0}^{N+1}(-1)^k\binom{N+1}{k}\bar S_\alpha^{N,\Qp}\left(-m-2k-(\alpha|\gamma)-1\right)\,
\be_{\alpha+\gamma} \eq 0\,.
\end{align}
The last equation holds because $S_\alpha^{N,\Qp}$ is a polynomial of degree $N$ up to elements in $\ideal_N^{\Uc}$ which act trivial by definition of $\verQ(N)$. This shows that \eqref{eq:Action_Ec_on_be_alpha} defines an action of the parabolic algebra on the states $\be_\beta$ for $\beta \in \Qr$.

Finally, we show that this action of $\kfrakpar$ is indeed a representation and respects the Lie bracket. For all $m,n\in\ints$, $\alpha,\beta\in \Delta$, $\gamma\in \Qr$ and $\Uc(\pheisenberg)$ the action of the generator $\Ec_m^\alpha$ on $U_\Qp\be_\beta$ is uniquely defined by the action of $\Ec_m^\alpha$ on $\be_\gamma$ and the commutation relations of $\Ec^\alpha_m$ with the compact Heisenberg algebra. Therefore, the action of the compact Heisenberg algebra and $\Ec^\alpha_m$ respect the commutation relations. It remains to show that $\Ec^\alpha_m$ and $\Ec^\beta_n$ respect the commutation relations on $\be_\gamma$. 
Inserting Proposition \ref{prop:ExtendedSchurPoly} in \eqref{eq:Action_Ec_on_be_alpha} we find
\begin{align}
\rho^N(\Ec_m^\alpha) \be_\gamma 
\aeq 
\epsilon(\alpha,\gamma)S^{N,\Qp}_{-n-(\alpha|\beta)-1}(\{\Qp^\alpha\})\,\be_{\gamma+\alpha} 
+ 
\epsilon(\alpha,\gamma)S^{N,\Qp}_{n+(\alpha|\beta)-1}(\{-\Qp^\alpha\})\,\be_{\gamma-\alpha}
\nn\\
\aeq 
\epsilon(\alpha,\gamma)S^{\Hc}_{-n-(\alpha|\beta)-1}(\{\Hc^\alpha\})\,\be_{\gamma+\alpha} 
+ 
\epsilon(\alpha,\gamma)S^{\Hc}_{n+(\alpha|\beta)-1}(\{-\Hc^\alpha\})\,\be_{\gamma-\alpha}\, .
\end{align}
But this action is exactly the action on the maximal states of the basic representation and because $\Ec_m^\alpha$ and the compact Heisenberg algebra satisfy the commutation relations, this action necessarily satisfies the commutation relations between $\Ec_m^\alpha$ and $\Ec_n^\beta$. This is illustrated in detail in Appendix \ref{app:Appendix_Details}.
\end{proof}

Now, we are equipped with infinitely many $\kfrak$-representations $\verQ(N)$ and the quotients of $\verQ(N)$ by subrepresentations of $\verQ(N)$. In the theorem below we show that the basic representation can be projected onto these finite dimensional $\kfrak$-representations.

\begin{theorem}
\label{theo:VerQ_In_Basic}
For all $N\in\nats_0$, there exists a $\kfrak$-homomorphism $\Gmap_N:\basic\to\verQ(N)$ given by
\begin{align}
\label{eq:GMap}
\Gmap_N\left(e^{\gamma}\otimes f(\{\alpha\in\Delta\})\right)
\eq 
\Fc_f^{N,\Qp}(\QSet) \be_{\gamma}
\end{align}
which is surjective and commutes with $\kfrak$.
\end{theorem}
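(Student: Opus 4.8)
The plan is to verify three things in order: (1) that $\Gmap_N$ is well defined as a linear map into $\verQ(N)$, (2) that it is surjective, and (3) that it intertwines the $\kfrak$-action. For well-definedness, recall from Proposition \ref{prop:kOnMaximalStates}b that every state $e^\gamma\otimes f$ arises uniquely as $\Fc^{\Hc}_f(e^\gamma\otimes 1)$ with $\Fc^{\Hc}_f\in\Uc(\cheisenberg)$ determined by $f$. Since $\rho^N$ is a fixed Lie algebra homomorphism, $\Fc^{N,\Qp}_f=\rho^N(\Fc^{\Hc}_f)$ is a well-defined element of $\Uc(\pheisenberg)^N$, and it acts on $\be_\gamma\in\verQ(N)$ by the representation of Proposition \ref{prop:VerQRepresentation}. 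Because $\verQ(N)$ already has the ideal $\ideal_N^{\Uc}$ quotiented out and the equivalence relation \eqref{eq:Equi_Relation} imposed, all the ambiguity in writing $f$ in terms of Heisenberg generators is automatically killed, so $\eqref{eq:GMap}$ gives a genuine linear map. (A small point to check: the expression should not depend on the representative of $f$ as a polynomial, but since the basic representation literally \emph{is} the polynomial algebra, $f$ \emph{is} the state, so there is nothing to check beyond linearity.)

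For surjectivity, observe that $\be_\gamma=\Gmap_N(e^\gamma\otimes 1)$ for each $\gamma\in\Qr$, so the image contains all the ``maximal'' states $\be_\gamma$. By construction $\verQ(N)$ is spanned (as a quotient of $\verQfull(N)$) by elements $U_\Qp\,\be_\gamma$ with $U_\Qp\in\Uc(\pheisenberg)^N$; and by the analogue of Proposition \ref{prop:kOnMaximalStates}b on the parabolic side — equivalently, by the surjectivity of $\rho^N$ onto $\Uc(\pheisenberg)^N$ applied to the identity $e^\gamma\otimes g=\Fc^{\Hc}_g(e^\gamma\otimes 1)$ — every such $U_\Qp\be_\gamma$ equals $\Gmap_N(e^\gamma\otimes g)$ for a suitable $g$. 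Hence $\Gmap_N$ is onto.

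The $\kfrak$-equivariance is the main obstacle and the part deserving real care. It suffices to check $\Gmap_N(X\cdot(e^\gamma\otimes f))=\rho^N(X)\cdot\Gmap_N(e^\gamma\otimes f)$ for $X$ ranging over the generators $\Hc^\alpha_n$ and $\Ec^\alpha_n$, and by Proposition \ref{prop:kOnMaximalStates}a it is enough to do this on maximal states $e^\gamma\otimes 1$, provided one also tracks how both sides interact with the Heisenberg generators used to build general states. For $\Hc^\alpha_n$ this is immediate from \eqref{eq:HFf=FHf}: $\Gmap_N(\Hc^\alpha_n(e^\gamma\otimes f))=\Gmap_N(e^\gamma\otimes\Hc^\alpha_n f)=\Fc^{N,\Qp}_{\Hc^\alpha_n f}\be_\gamma=\rho^N(\Hc^\alpha_n)\Fc^{N,\Qp}_f\be_\gamma$, using that $\rho^N$ is an algebra map and the $\Hc$-Schur relation \eqref{eq:H_SchurPolynomial_Def}. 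For $\Ec^\alpha_n$ on a maximal state, the left side is computed from \eqref{eq:Ec_nOnMaximalState}, which expresses $\Ec^\alpha_n(e^\gamma\otimes 1)$ in terms of Schur polynomials $S_N(\alphaSet)$ and $S_N(\{-\alpha\})$; applying $\Gmap_N$ turns these into $\Qp$-Schur polynomials $S^{N,\Qp}_N$ and $S^{N,\Qp}_N(\{-\Qp^\alpha\})$ times $\be_{\gamma\pm\alpha}$. The right side is $\rho^N(\Ec^\alpha_n)\be_\gamma$, which by Proposition \ref{prop:VerQRepresentation} (together with Proposition \ref{prop:ExtendedSchurPoly} and Corollary \ref{cor:S(-1)}, as in the last display of its proof) equals exactly $\epsilon(\alpha,\gamma)S^{\Hc}_{-n-(\alpha|\gamma)-1}\be_{\gamma+\alpha}+\epsilon(\alpha,\gamma)S^{\Hc}_{n+(\alpha|\gamma)-1}(\{-\Hc^\alpha\})\be_{\gamma-\alpha}$ after applying $\rho^N$ — i.e.\ the two sides match term by term. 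To promote this from maximal states to all of $\basic$, one uses that $\Ec^\alpha_m$ and $\cheisenberg$ satisfy fixed commutation relations \eqref{eq:Compact_Subalg_Commu}–\eqref{eq:Commutation_Ecn_Heisenberg} which are preserved by $\rho^N$: writing a general state as $\Fc^{\Hc}_f(e^\gamma\otimes 1)$, commute $\Ec^\alpha_m$ past $\Fc^{\Hc}_f$ using \eqref{eq:Commutation_Ecn_Heisenberg}, apply the maximal-state case, and then use that $\rho^N$ maps this rewriting consistently because it is an algebra homomorphism. The genuine subtlety here — the one I expect to cost the most work — is checking that the \emph{extended} $\Qp$-Schur polynomials appearing when $-n-(\alpha|\gamma)-1<0$ behave correctly; this is precisely what Proposition \ref{prop:ExtendedSchurPoly} and Corollary \ref{cor:S(-1)} were set up to control, so the argument should reduce to invoking them, with the $\omega^2_\alpha$ factor being absorbed by the equivalence relation \eqref{eq:Equi_Relation} defining $\verQ(N)$.
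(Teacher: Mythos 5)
Your proposal is correct and follows essentially the same route as the paper's proof: equivariance for $\Hc^\alpha_n$ via \eqref{eq:HFf=FHf}, then for $\Ec^\alpha_n$ on maximal states by converting \eqref{eq:Ec_nOnMaximalState} into $\Hc$-Schur polynomials and matching against Proposition \ref{prop:VerQRepresentation} via Proposition \ref{prop:ExtendedSchurPoly}, then extending to all states with the commutation identity \eqref{eq:Commutation_Ecn_Heisenberg}, with surjectivity from that of $\rho^N$. The added remarks on well-definedness are a harmless elaboration (though note Proposition \ref{prop:kOnMaximalStates}b gives existence of $\Fc^{\Hc}_f$, not uniqueness), and do not change the argument.
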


\begin{rem}
By this theorem, for all $N\in\nats_0$, the map $\Gmap_N$ projects the basic representation onto $\verQ(N)$ and on quotients of $\verQ(N)$ by subrepresentations of $\verQ(N)$. This is equivalent, to the fact that the kernel of $\Gmap_N$ is an $\kfrak$-invariant subspace 
\begin{align}
W_N = \mathrm{Ker}(\Gmap_N)\subset \basic \,.
\end{align}
\end{rem}
\begin{rem}
The map $\Gmap_N$ together with the action $\Ec_m^\alpha$ on the basic representation, allows to conveniently evaluate the action of $\Ec_m^\alpha$ on the states of $\verQ(N)$ by evaluating $\Ec_m^\alpha$ on the basic representation and then applying $\Gmap_N$. Proposition \ref{prop:Eval_Fc_Q} gives an efficient formula to evaluate $\Gmap_N$ on arbitrary states of the basic representation.
\end{rem}

\begin{proof}
First, we show commutation of $\Gmap_N$ with the compact Heisenberg algebra.
For all $n\in\nats$, $\alpha\in\Delta$, $\gamma \in \Qr$, $f\in\reals[\{\beta\in\Delta\}]$ holds by \eqref{eq:HFf=FHf}
\begin{align}
\Gmap_N\left(\Hc_n^\alpha\, e^\gamma \otimes f \right) 
\aeq  
\Fc_{\Hc_n^\alpha f}^{\Qp}(\QSet)  \be_{\gamma}
\eq 
\Hc_n^\alpha\Fc_{f}^{\Qp}(\QSet) \be_{\gamma} \eq \Hc_n^\alpha\, \Gmap_N\left( e^\beta \otimes f\right) \,.
\end{align}
Therefore, the homomorphism $\Gmap_N$ commutes with the compact Heisenberg algebra. 

Next, we show commutation of $\Gmap_N$ with $\Ec_n^\alpha$ for all $n\in\ints$ on the maximal states. 
Therefore, let us evaluate $\Gmap_N\circ \Ec^\alpha_n$ on a maximal state by using \eqref{eq:Ec_nOnMaximalState} 
\begin{align}
\label{eq:GEcN_on_MaximalStates}
\Gmap_N(\Ec_n^\alpha e^\gamma\otimes 1) 
\aeq 
\Gmap_N\left(\epsilon(\alpha,\gamma)\,e^{\gamma+\alpha} \otimes S_{-n-(\alpha,\gamma)-1}(\alphaSet)+\epsilon(\alpha,\gamma)\,e^{\gamma-\alpha} \otimes S_{n+(\gamma,\alpha)-1}(\{-\alpha\})\right)
\end{align}
By Proposition \ref{prop:kOnMaximalStates} the Schur polynomials $S_n(\{\alpha\})$ are generated by the $\Hc$-Schur polynomials, which allows to replace $S_n(\{\alpha\})$ by $S^{\Hc}_n(\{\Hc^\alpha\})$. Because the compact Heisenberg algebra commutes with $\Gmap_N$, the $\Hc$-Schur polynomials commute with $\Gmap_N$. Then, from \eqref{eq:GEcN_on_MaximalStates} follows 
\begin{align}
\Gmap_N(\Ec_m^\alpha e^\gamma\otimes 1) 
\aeq 
\epsilon(\alpha,\gamma) S^{\Hc}_{-m-(\alpha,\gamma)-1}(\alphaSet)\, \be_{\gamma+\alpha}
\,+\, 
\epsilon(\alpha,\gamma) S^{\Hc}_{m+(\gamma,\alpha)-1}(-\alphaSet)\,\be_{\gamma-\alpha}
\nn\\
\aeq 
\epsilon(\alpha,\gamma) 
\bar S^{N,\Qp}_\alpha({-m-(\alpha,\gamma)-1})\, \be_{\gamma+\alpha} \eq \Ec_m^\alpha\, \be_\gamma
\end{align}
by Proposition \ref{prop:VerQRepresentation}. This proves commutation of $\Ec^\alpha_n$ with $\Gmap_N$ on all maximal states.

It remains to show commutation of $\Ec_m^\alpha$ and $\Gmap_N$ on all states.  
Therefore, we write a state $e^\gamma\otimes f$ for a polynomial $f\in\reals[\{\beta\in\Delta\}]$ as generated from the maximal state by $\Fc_f^{\Hc}$. For $U=\Fc_f^{\Hc}$ we apply the commutation identity \eqref{eq:Commutation_Ecn_Heisenberg}, which provides the appropriate sets of $U_j$ and $n_j$ to commute $\Ec_n^\alpha$ with $\Fc_f^{\Hc}$. It allows us to derive
\begin{align}
\Ec_n^\alpha \Gmap_N\left(e^\gamma\otimes f\right) 
\aeq \Ec_n^\alpha \Gmap_N\left(\Fc_f^{\Hc} e^\gamma\otimes 1\right)
\eq \Ec_n^\alpha \Fc_f^{\Hc}\Gmap_N\left(e^\gamma\otimes 1\right)  
\nn\\
\aeq \sum_{j=1}^{J}U_j \Ec_{n_j}^r \Gmap_N\left( e^\gamma\otimes 1\right)
\eq \sum_{j=1}^{J}U_j \Gmap_N\left(\Ec_{n_j}^\alpha e^\beta\otimes 1\right) \nn\\
\aeq \sum_{j=1}^{J}\Gmap_N\left(U_j \Ec_{n_j}^r e^\gamma\otimes 1\right)
\eq \Gmap_N\left(\Ec_{n_j}^\alpha \Fc_f^{\Hc} e^\gamma\otimes 1\right) 
\eq \Gmap_N\left(\Ec_n^\alpha e^\gamma\otimes f\right)\, , 
\end{align}
which proves commutation of $\kfrak$ and $\Gmap_N$. The homomorphism $\Gmap_N$ is surjective since $\rho^N$ is surjective.
\end{proof}

This theorem shows that for every $N\in\nats$ the surjective homomorphism $\Gmap_N$ projects the basic representation onto $\verQ(N)$ and quotient of $\verQ(N)$ by subrepresentation of $\verQ(N)$. The theorem also implies the existence of infinitely many filtered invariant subspaces as the kernel of $\Gmap_N$, which we give in a corollary after the next theorem.
In the next theorem we show that if and only if the basic representation projects onto a finite dimensional $\kfrak$-representation, which is also a representation of $\kfrakpar$, then the representation is equivalent to $\verQ(N)$ or to the quotient of $\verQ(N)$ by a subrepresentation.

\begin{theorem}
\label{theo:Decomp_Complete}
If for an $N\in\nats_0$ the basic representation projects by $\rho^N$ onto a finite dimensional representation $\Psi$ of $\kfrakpar^N$, then $\Psi$ is 
equivalent to $\verQ(N)$ or to the quotient of $\verQ(N)$ by a subrepresentation of $\verQ(N)$.
\end{theorem}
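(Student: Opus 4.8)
The plan is to produce a surjective $\kfrak$-homomorphism $\phi\colon\verQ(N)\twoheadrightarrow\Psi$ with $\phi\circ\Gmap_N=\pi$, where $\pi\colon\basic\twoheadrightarrow\Psi$ is the given projection; then $\Psi\cong\verQ(N)/\ker\phi$ with $\ker\phi\subset\verQ(N)$ a $\kfrak$-subrepresentation, which is exactly the assertion (with $\ker\phi=0$ giving $\Psi\cong\verQ(N)$). If $\pi(e^0\otimes1)=0$ then $\Psi=\Uc(\kfrak)\,\pi(e^0\otimes1)=0$ by Proposition~\ref{prop:kOnMaximalStates}\emph{a}, a trivial quotient, so assume $\pi(e^0\otimes1)\neq0$.

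I would define $\phi$ on the generating states of $\verQ(N)$ by $\phi(\be_\gamma)\coloneqq\pi(e^\gamma\otimes1)$ for $\gamma\in\Qr$ and extend it by $\phi(U\be_\gamma)\coloneqq U\,\pi(e^\gamma\otimes1)$ for $U\in\Uc(\pheisenberg)^N$. Since $\verQ(N)=\verQfull(N)/\!\sim$ with $\verQfull(N)=(\Uc(\pheisenberg)/\ideal_N^{\Uc})\otimes\reals[\Qr]$, well-definedness of $\phi$ amounts to: (i) $\ideal_N^{\Uc}$ acts by zero on $\Psi$, which is automatic because $\Psi$ is a $\kfrakpar^N$-representation; and (ii) the identification $\omega_{\alpha,N}^2\,\be_\gamma\sim\be_{\gamma+2\alpha}$ of \eqref{eq:Equi_Relation} is respected, i.e. $\omega_{\alpha,N}^2\,\pi(e^\gamma\otimes1)=\pi(e^{\gamma+2\alpha}\otimes1)$ in $\Psi$ for all $\alpha\in\Delta$, $\gamma\in\Qr$. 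Granting this, $\phi$ is a morphism of $\Uc(\pheisenberg)^N$-modules; it is moreover $\kfrak$-equivariant, since by the proof of Proposition~\ref{prop:VerQRepresentation} the action of $\Ec^\alpha_n$ on the states $\be_\gamma$ coincides formally with its action on the maximal states $e^\gamma\otimes1$ (Schur polynomials replaced by $\Hc$-Schur polynomials and pushed through $\rho^N$), and both $\Gmap_N$ and $\pi$ intertwine this action by Theorem~\ref{theo:VerQ_In_Basic}; surjectivity of $\phi$ is clear because $\pi$ is surjective and $\pi(e^0\otimes1)$ generates $\Psi$.

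The crux is therefore statement (ii). I would prove it by realizing the lattice shift $\gamma\mapsto\gamma+2\alpha$ inside $\basic$ through two compact generators: from \eqref{eq:Ec_nOnMaximalState}, using $S_0(\alphaSet)=1$ and $S_{-2}(\{-\alpha\})=0$, one obtains $\Ec^\alpha_{-(\alpha|\gamma)-1}(e^\gamma\otimes1)=\epsilon(\alpha,\gamma)\,e^{\gamma+\alpha}\otimes1$ and then $\Ec^\alpha_{-(\alpha|\gamma)-3}(e^{\gamma+\alpha}\otimes1)=\epsilon(\alpha,\gamma+\alpha)\,e^{\gamma+2\alpha}\otimes1$, hence $e^{\gamma+2\alpha}\otimes1=\epsilon(\alpha,\gamma)\epsilon(\alpha,\gamma+\alpha)\,\Ec^\alpha_{-(\alpha|\gamma)-3}\Ec^\alpha_{-(\alpha|\gamma)-1}(e^\gamma\otimes1)$ in $\basic$. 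Applying $\pi$ and then (separately) $\Gmap_N$ to this identity, and using $\Gmap_N(e^\mu\otimes1)=\be_\mu$ together with the intertwining property, gives $\pi(e^{\gamma+2\alpha}\otimes1)=\epsilon(\alpha,\gamma)\epsilon(\alpha,\gamma+\alpha)\,\rho^N(\Ec^\alpha_{-(\alpha|\gamma)-3})\rho^N(\Ec^\alpha_{-(\alpha|\gamma)-1})\,\pi(e^\gamma\otimes1)$ in $\Psi$, and the same expression with $\pi(e^\gamma\otimes1)$ and $\pi(e^{\gamma+2\alpha}\otimes1)$ replaced by $\be_\gamma$ and $\be_{\gamma+2\alpha}$ in $\verQ(N)$. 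But in $\verQ(N)$ we also have $\be_{\gamma+2\alpha}=\omega_{\alpha,N}^2\,\be_\gamma$ by construction, so the element $D_{\alpha,\gamma}\coloneqq\omega_{\alpha,N}^2-\epsilon(\alpha,\gamma)\epsilon(\alpha,\gamma+\alpha)\,\rho^N(\Ec^\alpha_{-(\alpha|\gamma)-3})\rho^N(\Ec^\alpha_{-(\alpha|\gamma)-1})\in\Uc(\kfrakpar^N)$ annihilates $\be_\gamma$, and (ii) is precisely the statement $D_{\alpha,\gamma}\,\pi(e^\gamma\otimes1)=0$. To close the gap I would bring $D_{\alpha,\gamma}$ to a normal form in $\Uc(\kfrakpar^N)$ using Proposition~\ref{prop:WeylGrou} (which conjugates the $\Ec^\alpha_n$ by the translation $n\mapsto n-(\alpha|\alpha)$) and Lemma~\ref{lem:p_Weyl}–Remark~\ref{rem:p_Weyl} (identifying $\omega_{\alpha,N}^2$ with the Heisenberg element $\sum_m\frac{1}{m!}(p^\alpha_{N,0}(0))^m$), and then reduce $D_{\alpha,\gamma}\,\pi(e^\gamma\otimes1)=0$ to the $\Qp$-Schur identities of Proposition~\ref{prop:ExtendedSchurPoly} and Corollary~\ref{cor:S(-1)}, invoking the finite-dimensionality of $\Psi$ to exclude the potentially larger, infinite-dimensional $\kfrakpar^N$-quotient $\basic/(\ker\rho^N)\basic$ and thereby force $\mathrm{Ann}_{\Uc(\kfrakpar^N)}(\pi(e^\gamma\otimes1))\supseteq\mathrm{Ann}_{\Uc(\kfrakpar^N)}(\be_\gamma)$.

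I expect this last reduction — verifying that $D_{\alpha,\gamma}$ annihilates the cyclic vector of $\Psi$, i.e. that finiteness of $\Psi$ forces exactly the relations cutting $\verQ(N)$ out of the universal $\kfrakpar^N$-quotient of $\basic$ — to be the main obstacle; it is the single place where the finite-dimensionality hypothesis is genuinely used, and it relies on the full strength of the $\Hc$- and $\Qp$-Schur machinery of Section~\ref{sec:ActionOnBasic}.
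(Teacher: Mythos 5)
Your overall architecture — build a surjective $\kfrakpar^N$-morphism $\phi\colon\verQ(N)\to\Psi$ with $\phi(\be_\gamma)=\pi(e^\gamma\otimes 1)$, reduce everything to well-definedness, and isolate compatibility with the identification $\omega_{\alpha,N}^2\,\be_\gamma\sim\be_{\gamma+2\alpha}$ as the crux — is exactly the paper's strategy, and your handling of the trivial case, of condition (i), and of $\kfrak$-equivariance is fine. But the crux itself is not closed, and the route you sketch for it does not work. Observing that $D_{\alpha,\gamma}=\omega_{\alpha,N}^2-\epsilon(\alpha,\gamma)\epsilon(\alpha,\gamma+\alpha)\,\rho^N(\Ec^\alpha_{-(\alpha|\gamma)-3})\rho^N(\Ec^\alpha_{-(\alpha|\gamma)-1})$ annihilates $\be_\gamma$ in $\verQ(N)$ gives you nothing about its action on $\pi(e^\gamma\otimes 1)$, and the step you propose to bridge this — ``force $\mathrm{Ann}_{\Uc(\kfrakpar^N)}(\pi(e^\gamma\otimes1))\supseteq\mathrm{Ann}_{\Uc(\kfrakpar^N)}(\be_\gamma)$'' — is precisely the assertion that $\Psi$ is a quotient of the cyclic module generated by $\be_\gamma$, i.e.\ the theorem itself; invoking finite-dimensionality at that point is circular. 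Moreover, the tools you name for the reduction (Proposition \ref{prop:WeylGrou}, Lemma \ref{lem:p_Weyl}) do not produce the needed identity: conjugation by $\omega_{\alpha,N}^2$ merely shifts loop indices and leaves $D_{\alpha,\gamma}$ in an equally opaque form.

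The missing ingredient is Proposition \ref{prop:IdealRelations}, which you never invoke. Since $\Psi$ carries an action of $\kfrakpar^N$, the element $\sum_{m=0}^{N+1}(-1)^m\binom{N+1}{m}\Ec^\alpha_{-(\alpha|\gamma)-1+2m}\in\ideal_N$ annihilates $\Psi$. Applying this to $\pi(e^\gamma\otimes 1)$ and using equivariance of $\pi$, the $m=0$ term is $\epsilon(\alpha,\gamma)\,\pi(e^{\gamma+\alpha}\otimes 1)$ (only the upper Schur branch $S_0=1$ survives), while for each $m\geq 1$ only the lower branch survives and yields $\epsilon(\alpha,\gamma)\,S_{2m-2}(\{-\Qp^\alpha\})\,\pi(e^{\gamma-\alpha}\otimes 1)$; the resulting binomial sum of $\Qp$-Schur polynomials is the polynomial extension $\bar S^{N,-\Qp}_\alpha(-2)$, which equals $\omega_{\alpha,N}^2$ by Proposition \ref{prop:ExtendedSchurPoly} at $n=2$. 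This gives $\pi(e^{\gamma+\alpha}\otimes 1)=\omega_{\alpha,N}^2\,\pi(e^{\gamma-\alpha}\otimes 1)$ directly and non-circularly, which is your condition (ii). In short: the ideal relation is the only mechanism by which the hypothesis that the action factors through $\kfrakpar^N$ actually enters, and without it your proof does not go through.
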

\begin{rem}
The theorem also implies that a $\kfrak$-invariant subspace in the basic representation for which the quotient of the basic representation by the subspace is a finite dimensional $\kfrakpar$-representation, the subspace is equivalent to $W_N$ or to a subrepresentation of $W_N$ for some $N\in\nats_0$.
\end{rem}

\begin{rem}
\label{rem:k_rep_is_P_rep}
All known finite dimensional $\kfrak$-representations are also representations of the double parabolic algebra $\kfrakpar$. Yet, it is not proven, but we expect that indeed all finite dimensional $\kfrak$-representations are also $\kfrakpar$-representations. Then, for all finite dimensional $\kfrak$-representations which embed in the basic representation, there exists an $N\in\nats_0$ such that the representation is equivalent to $W_N$ or to a subrepresentation of $W_N$. Then, this is the finest $\kfrak$-structure of the basic representation.
\end{rem}

\begin{proof}
If $\Psi$ is the trivial representation, it is equivalent to the quotient of $\verQ(0)$ by $\verQ(0)$. Let us assume $\Psi$ is non-trivial.
Because $\Psi$ is a projection of the basic representation, there exists a surjective homomorphism $\Gmap':\basic \rightarrow \Psi$ which commutes with $\kfrak$.
By Proposition \ref{prop:kOnMaximalStates} and because $\Psi$ is non-trivial by assumption and $\Gmap'$ is surjective, $\Gmap'$ does not vanish on the maximal states. By commutation of $\Gmap'$ and the Heisenberg algebra necessarily $\Psi$ and $\Gmap'$ satisfy
\begin{align}
\Gmap'(e^{\gamma}\otimes f)\eq \Fc_f^{N,\Qp} \Gmap'(e^{\gamma}\otimes f)\, .
\end{align}
Therefore, $\Psi$ as a representation of the parabolic Heisenberg algebra is equivalent to $\verQfull(N)$ or to the quotient of $\verQfull(N)$ with a subrepresentation. 

To show that $\Psi$ is a quotient space of $\verQfull(N)$ w.r.t. the equivalence relation in \eqref{eq:Equi_Relation} and that $\Gmap'=\Gmap_N$, we perform several derivations explained below
\begin{align}
\Gmap'(e^{\gamma+\alpha}\otimes 1) 
\aeq
\epsilon(\alpha,\gamma)\Ec^\alpha_{-(\alpha|\gamma)-1} \Gmap'( e^{\gamma} \otimes 1)
\nn\\ 
\aeq
-\epsilon(\alpha,\gamma)\sum_{m=1}^{N+1}(-1)^m \binom{N+1}{m}\Gmap'\left(\Ec^\alpha_{-(\alpha|\gamma)-1+2m}  e^{\gamma} \otimes 1\right) 
\nn\\
\aeq
-\sum_{m=1}^{N+1}(-1)^m \binom{N+1}{m} S_{2m-2}(\{-\Qp^\alpha\}) \Gmap'( e^{\gamma-\alpha} \otimes 1)  
\nn\\
\aeq
\bar S^{N,-\Qp}_\alpha(-2) \Gmap'( e^{\gamma-\alpha} \otimes 1)  
\eq
\omega^2_\alpha \Gmap'( e^{\gamma-\alpha} \otimes 1)\,.  
\end{align}
In the first line, we used that $\Gmap'$ commutes with  $\Ec^\alpha_{-(\alpha|\gamma)-1}$. In the second line, we used that $\Psi$ has parabolic level $N$, then we applied the ideal relation in Proposition \ref{prop:IdealRelations} and used again commutation of $\Gmap'$ and $\Ec_m^\alpha$. 
For the third line, we evaluate $\Ec_m^\alpha$ on the maximal state with weight $\gamma$, then we used commutation of the $\Gmap'$ with the compact Heisenberg algebra and applied the Lie algebra homomorphism which provides the $\Qp$-Schur polynomials. 
In the fourth line, we applied \eqref{eq:BinomialId} and the fact that $\bar S^{N,\Qp}_\alpha(n)$ are polynomials for the first equation. The last equation derives from Proposition \ref{prop:ExtendedSchurPoly} for $n=2$.
Therefore $\Psi$ is a quotient space w.r.t. the equivalence relation in \eqref{eq:Equi_Relation}. Commutation of $\Gmap'$ and $\Ec_m^\alpha$ implies by Proposition \ref{prop:ExtendedSchurPoly}, that $\Ec_m^\alpha$ acts on $\Psi$ by \eqref{eq:Action_Ec_on_be_alpha}. Thus, $\Psi$ is equivalent to $\verQ(N)$ or to the quotient of $\verQfull(N)$ with a subrepresentation. 
\end{proof}

By these two theorems, we can derive the cosocle filtration of the basic representation, but therefore, it is beneficial to add some notation. For $k\leq N$ let $\verQ(N)_k$ be the projection of $\verQ(N)$ onto the $\kfinite$-representation at parabolic level $k$
\begin{align}
\verQ(N) \eq \bigoplus_{k=0}^N \verQ(N)_k\, .
\end{align}
Then, due to the parabolic grading, for $M\in\nats_0$ also
\begin{align}
\bigoplus_{k=M}^N \verQ(N)_k\,
\end{align}
are $\kfrak$-representations. Using this notation, we provide the cosocle filtration of the basic representation in the next corollary. 
\begin{corollary}
\label{cor:Invariant_Subspace}
For all $N\in\nats_0$, with $\verQ(N)$ and $\Gmap_N:\basic\to\verQ(N)$ as in Proposition \ref{prop:VerQRepresentation} and Theorem \ref{theo:VerQ_In_Basic}, the following identities hold.
\begin{enumerate}[label=\alph*.)]
\item The kernel of $W_N \eq \mathrm{Ker}(\Gmap_N)\subset\basic$ is a non-trivial proper $\kfrak$-invariant subspace and we set $W_{-1}=\basic$.
\item The $\kfrak$-invariant subspaces of $W_N$, for which the quotient of $W_N$ by the subrepresentation is a finite dimensional $\kfrakpar$-representation are $W_{M}$ and subrepresentations of $W_M$ for $M>N$. Equivalently, if $W_N$ projects onto  
a finite dimensional $\kfrakpar$-representations, which is also a $\kfrak$-representation by $\rho$, then there exists a $M\in\nats$, $M\geq N+1$ such that the $\kfrakpar$-representation is equivalent to
\begin{align}
\bigoplus_{k=N+1}^M \verQ(M)_k\, 
\end{align} 
or to a quotient of this representation by a subrepresentation.
\item For $M\in\ints_{\geq -1}$, the quotients of the invariant subspaces
\begin{align}
\frac{W_M}{W_{M+1}} \simeq \verQ(M+1)_{M+1}
\end{align}
are finite-dimensional semisimple $\kfrak$-representations, on which the generators with positive parabolic level act trivial.
\item For $M\in\ints_{\geq -1}$, the cosocle and the radical 
(see Appendix \ref{app:Cosocle_Filtration}) 
of $W_M$ are
\begin{alignat}{6}
&\mathrm{rad}(W_M) &&\eq &&W_{M+1}\, , 
\qquad \mathrm{cosoc}(W_M) &&\simeq &&\verQ(M+1)_{(M+1)}\, .
\end{alignat}
\item The $\kfrak$-cosocle filtration (see Appendix \ref{app:Cosocle_Filtration}) of $\basic$ is $(W_N)_{N\in\nats}$
\begin{align}
\label{eq:CosocleFiltration}
\basic \supset W_0\supset W_1\supset W_2\supset\ldots
\end{align} 
\end{enumerate}
\end{corollary}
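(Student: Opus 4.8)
The plan is to read off items (a)--(e) from Theorems~\ref{theo:VerQ_In_Basic} and~\ref{theo:Decomp_Complete}, once two structural facts are in place: that $\verQ(M)$ is the quotient of $\verQ(M+1)$ by its top parabolic level, compatibly with the maps $\Gmap_N$, and that this top parabolic level is a semisimple $\kfrak$-module. Part~(a) is then immediate: $\Gmap_N$ is a surjective $\kfrak$-homomorphism (Theorem~\ref{theo:VerQ_In_Basic}) onto $\verQ(N)$, which is nonzero ($\be_0\neq0$) and finite dimensional (the space $\Uc(\pheisenberg)^N$ is spanned by bounded monomials in finitely many generators, and the relation~\eqref{eq:Equi_Relation} cuts $\reals[\Qr]$ down to $\reals[\Qr/2\Qr]$), so since $\dim\basic=\infty$ the kernel $W_N=\mathrm{Ker}(\Gmap_N)$ is a proper nontrivial $\kfrak$-invariant subspace. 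For~(c) I would use that $\ideal^{\Uc}_{M+1}\subset\ideal^{\Uc}_M$ with quotient $\Uc(\pheisenberg)_{M+1}$, so that $\verQ(M)\cong\verQ(M+1)/\verQ(M+1)_{M+1}$, and that $\rho^M$ equals $\rho^{M+1}$ followed by $\kfrakpar^{M+1}\to\kfrakpar^M$, giving $\Gmap_M=\pi\circ\Gmap_{M+1}$ with $\pi:\verQ(M+1)\twoheadrightarrow\verQ(M)$ of kernel $\verQ(M+1)_{M+1}$; hence $W_{M+1}\subset W_M$, and restricting $\Gmap_{M+1}$ to $W_M$ (image $\mathrm{Ker}\,\pi$, kernel $W_{M+1}$) yields $W_M/W_{M+1}\cong\verQ(M+1)_{M+1}$. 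Since every generator of $\kfrakpar$ of positive parabolic level raises the level into $\ideal_{M+1}$ and so annihilates $\verQ(M+1)_{M+1}$, the $\kfrak$-action on the latter factors through $\kfrakpar_0=\kfinite\otimes\mathcal{C}_2$, which integrates to a compact Lie group, so that module is completely reducible; this gives~(c).

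Part~(b) I would prove as the exact analogue of Theorem~\ref{theo:Decomp_Complete} with $W_N$ in place of $\basic$: a surjective $\kfrak$-homomorphism $\Gmap':W_N\to\Psi$ onto a finite dimensional $\kfrakpar$-module has kernel of finite codimension, and one shows it factors through $W_N/W_M\cong\bigoplus_{k=N+1}^{M}\verQ(M)_k$ for some $M$ by rerunning the argument of Theorem~\ref{theo:Decomp_Complete}, using Proposition~\ref{prop:kOnMaximalStates}, commutation with the compact Heisenberg algebra, the extended Schur identity of Proposition~\ref{prop:ExtendedSchurPoly}, and the ideal relations of Proposition~\ref{prop:IdealRelations}. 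These together force any such $\Psi$ to be supported in the parabolic levels $N+1,\dots,M$ and to be a quotient of $\bigoplus_{k=N+1}^{M}\verQ(M)_k$, which is the content of~(b).

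For~(d) the inclusion $\mathrm{rad}(W_M)\subset W_{M+1}$ is immediate from~(c): $W_M/W_{M+1}$ is semisimple and nonzero, so $W_M$ has maximal submodules and the ones containing $W_{M+1}$ already intersect in $W_{M+1}$. The content is the reverse inclusion, namely that \emph{every} maximal submodule $U$ of $W_M$ contains $W_{M+1}$; if not, then $U+W_{M+1}=W_M$ and $S:=W_M/U$ is simultaneously simple and a quotient of $W_{M+1}$, and I would invoke~(b) (equivalently, Theorem~\ref{theo:Decomp_Complete} applied to $W_M$) to see that $S$ must be a finite dimensional $\kfrakpar$-module supported in parabolic levels $\geq M+1$, whereas $W_{M+1}=\mathrm{Ker}(\Gmap_{M+1})$, being the kernel of the level-$(\leq M+1)$ projection, carries no content in parabolic levels $\leq M+1$; comparing parabolic degrees on $S$ then yields a contradiction. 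With $\mathrm{rad}(W_M)=W_{M+1}$ for all $M\geq-1$ (and $W_{-1}=\basic$), $\mathrm{cosoc}(W_M)=W_M/W_{M+1}\cong\verQ(M+1)_{M+1}$ follows from~(c), which finishes~(d); then~(e) is formal: $\basic\supset W_0\supset W_1\supset\cdots$ is by construction the iterated-radical (cosocle) filtration, its successive quotients $\verQ(N)_N$ are nonzero, finite dimensional and semisimple, and it is an infinite composition series in the sense of Appendix~\ref{app:Cosocle_Filtration} because $\bigcap_N W_N=0$, which is proven in Subsection~\ref{subsec:Infinite_Limit}.

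The main obstacle is the reverse inclusion in~(d): ruling out ``exotic'' maximal $\kfrak$-submodules of $W_M$ that do not respect the parabolic grading. Making precise that a simple subquotient $S$ is ``supported in parabolic levels $\geq M+1$'' requires either promoting the parabolic level to a genuine grading on the relevant subquotients of $\basic$, or arguing directly that every simple $\kfrak$-module arising as such a subquotient is a finite dimensional $\kfrakpar$-module; this is exactly where Theorem~\ref{theo:Decomp_Complete} and Proposition~\ref{prop:IdealRelations} do the real work, and the triviality of $\bigcap_N W_N$ from Subsection~\ref{subsec:Infinite_Limit} is needed in addition to exclude the possibility that $S$ is infinite dimensional.
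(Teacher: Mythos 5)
Your derivation follows the route the paper intends: the corollary is stated there without a separate proof, as a direct consequence of Theorems \ref{theo:VerQ_In_Basic} and \ref{theo:Decomp_Complete} together with the parabolic grading of $\verQ(N)$, and your reductions --- $\verQ(M)\cong\verQ(M+1)/\verQ(M+1)_{M+1}$ compatibly with $\Gmap_M=\pi\circ\Gmap_{M+1}$, semisimplicity of the top parabolic level as a module over $\kfrakpar^0\simeq\kfinite\otimes\mathcal{C}_2$, and rerunning Theorem \ref{theo:Decomp_Complete} with $W_N$ in place of $\basic$ for part (b) --- are exactly the intended steps, worked out in more detail than the paper supplies. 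The one place you flag a gap, the inclusion $W_{M+1}\subseteq\mathrm{rad}(W_M)$ in part (d), is genuinely the weak point: excluding a maximal submodule $U\subset W_M$ whose simple quotient is either infinite dimensional or a finite dimensional $\kfrak$-module that is \emph{not} a $\kfrakpar$-module via $\rho$ is outside the scope of Theorem \ref{theo:Decomp_Complete}, and the paper itself only covers this through the unproven expectation recorded in Remark \ref{rem:k_rep_is_P_rep} that every finite dimensional $\kfrak$-representation is a $\kfrakpar$-representation. So your proposal is at least as complete as the paper's own treatment, and your identification of where $\mathrm{rad}(W_M)=W_{M+1}$ still depends on that assumption is the correct diagnosis rather than a flaw in your approach.
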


This corollary provides the cosocle filtration of the basic representation under $\kfrak$. In the next subsection, we show that the cosocle filtration is indeed an \textit{infinite composition series} (see Appendix \ref{app:Cosocle_Filtration}) of the basic representation.

\subsection{Infinite composition series of basic representation}
\label{subsec:Infinite_Limit}
We prove the quite remarkable fact that the basic representation has an infinite composition series with cosocle filtration under $\kfrak$. That means, we prove that the basic representation has a descending chain of infinitely many invariant subspaces such that the quotient of two adjacent subspaces in the chain is finite dimensional semisimple. As vector spaces, the sum of all these finite dimensional quotients is equivalent to the basic representation. 

First, we derive the infinite completion $\verQ(\infty)$ of the family $(\verQ(N))_N$ and of the cosocle filtration of the basic representation \eqref{eq:CosocleFiltration}. Conversely to $\verQ(N)$, which is a projection of the basic representation, the infinite completion is not a projection of the basic representation but the basic representation embeds in the infinite completion. This will be used to show that the cosocle filtration \eqref{eq:CosocleFiltration} is indeed also an infinite composition series (define in Appendix \ref{app:Cosocle_Filtration}). Clearly, these statements must be properly defined, so first let us define the infinite completion $\verQ(\infty)$ and then show that the basic representation embeds in it. 

For each $N\in\nats_0$, the quotient universal enveloping algebra of the parabolic Heisenberg algebra $\Uc(\pheisenberg)^N$ is defined analogous to $\Uc(\kfrakpar)^N$. The inverse limit of the family $(\Uc(\pheisenberg)^N)_{N\in\nats}$ and $(\verQfull(N))_N$ with the morphism given by the natural projection are called
\begin{align}
\Uc(\pheisenberg)^{\infty}\aeq\lim_{\substack{\longleftarrow \\ N\to \infty}} \Uc(\pheisenberg)^N\, ,\nn\\
\verQfull(\infty) \aeq \lim_{\substack{\longleftarrow \\ N\to \infty}} \verQfull(N) \,\in\, \Uc(\pheisenberg)^\infty \otimes \reals[\Qr] \,.
\end{align}
The map $\omega_{\alpha,\infty}$ on the space $\verQfull(\infty)$ is defined uniquely from $\omega_{\alpha,N}$ and the universal property of the inverse limit 
\begin{align}
\label{eq:WeylGroupElements_infty}
\omega_{\alpha,\infty} \aeq \exp\left(\sum_{k\geq 0}\tfrac{2}{2k+1}\Qp^\alpha_{2k+1,0}\right) \, \in\,  \Uc(\pheisenberg)^\infty \,\,. 
\end{align}
The equivalence relation \eqref{eq:Equi_Relation} extends to an equivalence relation of the completions. For every $\alpha\in\Delta$, and $\be_\gamma^{\infty}\in\verQfull(\infty)$
\begin{align}
(\omega_{\alpha,\infty})^2\be_\gamma^{\infty} \simeq \be_{\gamma+2\alpha}^{\infty} \in \verQfull(\infty)\, ,
\end{align}
which defines the quotient space
\begin{align}
\verQ(\infty) \eq \left(\verQfull(\infty)\big/\sim\right)\,\equiv \lim_{\substack{\longleftarrow \\ N\to \infty}} \verQ(N) \, . 
\end{align}
The action of $\kfrak$ and the generating elements is uniquely given by the universal property of the inverse limit and \eqref{eq:Action_Ec_on_be_alpha} 
\begin{align}
\Fc_f^{\infty,R} = \rho(\Fc_f^{\Hc})\,\in\,\Uc(\pheisenberg)^{\infty}\, .
\end{align}
These elements allow to give the map $\Gmap_{\infty}$ explicitly, which embeds the basic representation in $\verQ(\infty)$. 
By the universal property of the inverse limit, the map
\begin{alignat}{4}
\Gmap_{\infty}\, \colon\; &\quad \basic &&\rightarrow &&\verQ(\infty)\\
 &e^{\gamma}\otimes f(\{\alpha\in\Delta\})\quad 
&&\mapsto
 \quad &&\Fc_f^{\infty,\Qp}(\QSet) \be_{\gamma}\,,
\end{alignat}  
is uniquely defined, such that for every $N\in\nats_0$ the projection of the codomain $\Uc(\pheisenberg)^\infty$ of $\Gmap_{\infty}$ to $\Uc(\pheisenberg)^N$ is the map $\Gmap_N$. 

\begin{proposition}
\label{prop:G_infty_Injective}
The map $\Gmap_{\infty}\colon \basic \rightarrow \verQ(\infty)$ is injective but not surjective. 
\end{proposition}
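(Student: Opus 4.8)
The plan is to establish injectivity by a filtration/induction argument on the loop level (equivalently, on the parabolic level $N$), and to establish non-surjectivity by exhibiting an element of $\verQ(\infty)$ not hit by any state of $\basic$, using a counting/dimension mismatch between the finite-dimensional graded pieces of $\basic$ and those of $\verQ(\infty)$.

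\medskip

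\noindent\textbf{Injectivity.} First I would observe that $\Gmap_\infty$ is, by construction, a $\kfrak$-homomorphism whose composition with the natural projection $\verQ(\infty)\to\verQ(N)$ equals $\Gmap_N$ for every $N$. Hence $\mathrm{Ker}(\Gmap_\infty)=\bigcap_{N\in\nats_0}\mathrm{Ker}(\Gmap_N)=\bigcap_{N\in\nats_0}W_N$, so it suffices to show this intersection is trivial. By Corollary \ref{cor:Invariant_Subspace}, the $(W_N)$ form a descending chain with $W_M/W_{M+1}\simeq\verQ(M+1)_{M+1}$, and crucially the quotient $\basic/W_N\simeq\verQ(N)$ is the sum of the parabolic-level pieces up to $N$. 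The key point to extract is that a state of $\basic$ at a given loop level $\ell$ is detected already at some finite parabolic level: by Proposition \ref{prop:kOnMaximalStates}, every state is obtained from a maximal state by the action of the compact Heisenberg algebra, and by the explicit formula \eqref{eq:GMap} together with Lemma \ref{lem:SchurH} and Remark \ref{rem:SchurInHc} the image $\Fc_f^{N,\Qp}\be_\gamma$ determines $f$ once $N$ is large enough relative to the degree of $f$ (the $\Hc$-Schur / $\Qp$-Schur polynomials of degree $\le N$ reproduce, under $\rho^N$, exactly the information of Schur polynomials up to that degree, since the $a_k^{(n)}$ are polynomials and no collapse occurs below parabolic level $N$). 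Concretely: if $X=e^\gamma\otimes f\neq 0$ with $f$ of degree $d$, then for $N\ge d$ one has $\Gmap_N(X)=\Fc_f^{N,\Qp}\be_\gamma\neq 0$, because the map $f\mapsto \Fc_f^{N,\Qp}$ on polynomials of degree $\le N$ is injective (it is the composition of $f\mapsto\Fc_f^{\Hc}$, which is injective by Proposition \ref{prop:kOnMaximalStates}, with $\rho^N$ restricted to $\Uc(\cheisenberg)$ of parabolic degree $\le N$, which is injective by Proposition \ref{prop:LieAlgHomo_rho} and Remark \ref{rem:Coef_ank_Polynomials}). A general $X\in\basic$ is a finite sum $\sum_\gamma e^\gamma\otimes f_\gamma$; choosing $N$ larger than all the $\deg f_\gamma$ and using that the $\be_\gamma$ for inequivalent $\gamma\bmod 2\Qr$ are independent in $\verQ(N)$ gives $\Gmap_N(X)\neq 0$. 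Hence $X\notin W_N$, so $\bigcap_N W_N=0$ and $\Gmap_\infty$ is injective.

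\medskip

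\noindent\textbf{Non-surjectivity.} Here the idea is that $\verQ(\infty)$, as an inverse limit, contains ``infinite'' elements — formal series in the $\Qp$-generators with unboundedly many nonzero parabolic components — that cannot be the image of any single state of $\basic$, since $\Gmap_\infty(e^\gamma\otimes f)=\Fc_f^{\infty,\Qp}\be_\gamma$ has a fixed finite polynomial degree in the $\Hc$-generators (hence in the $\Qp$-generators), and more generally $\Gmap_\infty(X)$ for $X\in\basic$ has bounded parabolic degree. Concretely I would take, for a fixed $\gamma$, the element $\xi=\omega_{\gamma,\infty}\,\be_\gamma\in\verQ(\infty)$ — or more simply $\sum_{k\ge0}\Qp^{\alpha}_{2k+1,0}\be_\gamma$ type series realized in the inverse limit — whose image in each $\verQ(N)$ is nonzero but whose parabolic-degree support is unbounded. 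If $\xi=\Gmap_\infty(X)$ for some $X\in\basic$, then $X$ is a finite combination of states $e^{\gamma'}\otimes f_{\gamma'}$ with $\max\deg f_{\gamma'}=d<\infty$; but then $\Gmap_\infty(X)$ lies in the subspace of $\verQ(\infty)$ spanned by parabolic-level-$\le d$ generators acting on the $\be_{\gamma'}$, contradicting unbounded support of $\xi$. One must check $\xi$ genuinely lies in the inverse limit, i.e. is compatible under the projections $\verQ(N+1)\to\verQ(N)$, which is immediate because these projections just kill parabolic level $N+1$ and $\xi$'s lower components are stable; and one must check $\xi\neq 0$, which follows since e.g. its parabolic-level-$1$ component is nonzero.

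\medskip

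\noindent\textbf{Main obstacle.} The delicate part is the injectivity step, specifically pinning down that the map $f\mapsto\Fc_f^{N,\Qp}$ on polynomials of degree $\le N$ is injective — i.e. that $\rho^N$ does not collapse the relevant part of $\Uc(\cheisenberg)$. This needs the polynomiality of the $a_k^{(n)}$ (Remark \ref{rem:Coef_ank_Polynomials}) and the fact that the ideal $\ideal_N$ only affects parabolic levels $>N$, so that restricting to parabolic degree $\le N$ loses nothing; making this precise may require tracking how the degree of $f$ as a polynomial in the $\alpha_{-k}$ translates into parabolic degree under $\rho$ (via the structure of $\Sh_n$ in Lemma \ref{lem:SchurH}), and ensuring the bound $N\ge\deg f$ is actually what is needed rather than some larger function of the data. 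The non-surjectivity, by contrast, is essentially a soft statement about inverse limits of nested finite-dimensional spaces versus their (non-complete) union, and should go through cleanly once a concrete unbounded-support element is written down.
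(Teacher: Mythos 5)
Both halves of your argument contain a genuine gap. For injectivity, reducing to $\bigcap_N W_N=\{0\}$ is fine, and your observation that $f\mapsto\Fc^{N,\Qp}_f$ is injective on polynomials of bounded degree handles a single summand $e^\gamma\otimes f$. But for a general finite sum $\sum_\gamma e^\gamma\otimes f_\gamma$ you only invoke independence of the $\be_\gamma$ for \emph{inequivalent} $\gamma$ modulo $2\Qr$; the dangerous case is precisely two summands $e^{\gamma}\otimes f$ and $e^{\gamma+2l\alpha}\otimes f'$ with equivalent labels, whose images $\Fc^{\infty,\Qp}_f\be_\gamma$ and $\Fc^{\infty,\Qp}_{f'}(\omega_{\alpha,\infty})^{2l}\be_\gamma$ land on the same coset representative and could a priori cancel (recall $W_N\neq\{0\}$ for every finite $N$, so cancellations of this kind do occur at each finite level). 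The paper's proof is devoted almost entirely to excluding this: one must show that $\Fc^{\infty,\Qp}_f$ is never equal to $\Fc^{\infty,\Qp}_{f'}(\omega_{\alpha,\infty})^{2l}$ with $l\neq 0$, which reduces to showing that $\sum_{k\geq 0}\tfrac{4l}{2k+1}\Qp^\alpha_{2k+1,0}$ is not in $\rho(\cheisenberg)$ --- the coefficients $\tfrac{1}{2k+1}$ are not polynomial in $k$, whereas by Remark \ref{rem:Coef_ank_Polynomials} every element of $\rho(\cheisenberg)$ has coefficients polynomial in $k$. Your proof never touches this point.

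For non-surjectivity your premise is false: $\Gmap_\infty(e^\gamma\otimes f)=\rho(\Fc^{\Hc}_f)\,\be_\gamma$ does \emph{not} have bounded parabolic degree, because $\rho$ sends each single generator $\Hc^\alpha_n$ to an infinite formal series $\tfrac12\sum_{k\geq 0}a^{(n)}_{2k+1}\Qp^\alpha_{2k+1,n}$ ranging over all parabolic levels (see \eqref{eq:LieHomDouble} and \eqref{eq:LieHomDouble_Expl}), so essentially every nonzero image already has unbounded parabolic support. Consequently an element of unbounded support such as your $\xi$ is not excluded from the image by a degree count, and your argument collapses. The situation is the reverse of what you describe: the elements that fail to be hit are those with only \emph{finitely many} nonzero parabolic components, e.g.\ $\Qp^\alpha_{1,0}\be_0$, because finite series are not in the image of $\rho$ (Proposition 9 of \cite{Kleinschmidt:2021agj}, which is exactly the one-line witness the paper uses).
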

\begin{rem}
By this proposition the map $\Gmap_N$ and $\Gmap_\infty$ between the modules $\basic$ as a $\kfrak$-module and $\verQ(N)$, $\verQ(\infty)$ as a $\kfrakpar$-module behave similar to the Lie algebra homomorphism $\rho^N$ and $\rho$ from $\kfrak$ to $\kfrakpar^N$ resp. $\kfrakpar$. 
By Proposition 9 and 11 in  \cite{Kleinschmidt:2021agj}, the map $\rho^N$ is surjective but not injective, while its completion $\rho$ is injective but not surjective. In particular the elements in $\kfrakpar$ which contain only finitely many terms in the formal power series are not in the image of $\rho$. The basic representation does not project onto the infinite completion $\verQ(\infty)$, but rather embeds in it.
\end{rem}

\begin{proof}
First, we show that $\Gmap_{\infty}$ is injective. 
Because the map $\rho$ is injective, the generating element, as a function $\Fc^{\infty,\Qp}_f: f\in\reals[\{\alpha\in\Delta\}]\mapsto\Uc(\pheisenberg)$ is injective. Therefore, for $\gamma\in Q$, the map $\Gmap_\infty$ is injective on the states $\{e^\gamma\otimes f\forwhich f\in\reals[\{\alpha\in\Delta\}]\}$. 

The elements $\be_{\gamma}$ and $\be_{\gamma'}$ are independent if $\gamma-\gamma'\notin 2\ints\Delta$ and therefore it remains to show that the images of $\Gmap_\infty$ on the states 
\begin{align}
\label{eq:help_Prop_G_infty_3}
\{e^{\gamma+2l\alpha}\otimes f\forwhich f\in\reals[\{\beta\in\Delta\},\,\alpha\in\Delta,\, l\in\nats]\}
\end{align}
are linearly independent. The image of $\Gmap_\infty$ on $e^{\gamma+2l\alpha}\otimes f$ and on $e^{\gamma+2k\alpha}\otimes f$ differ by a factor of $(\omega_{\alpha,\infty})^{l-k}$. Therefore, it remains to show that the images of $\Gmap_{\infty}$ on the states 
\begin{align}
\label{eq:help_Prop_G_infty_4}
\{e^{\gamma}\otimes f\forwhich f\in\reals[\{\beta\in\Delta\}]\}
\end{align}
are linearly independent to the images of $\Gmap_{\infty}$ on the states
\begin{align}
\label{eq:help_Prop_G_infty_5}
\{e^{\gamma+2l\alpha}\otimes f\forwhich f\in\reals[\{\beta\in\Delta\},\,\alpha\in\Delta]\}
\end{align}
for $l\in\nats$. The remaining independents relations of the image of the states in \eqref{eq:help_Prop_G_infty_3} follow by division with $\omega_{\alpha,\infty}$. 
To show that the image on the states in \eqref{eq:help_Prop_G_infty_4} and \eqref{eq:help_Prop_G_infty_5} are independent we show that the generating elements $\Fc_{f}^{\infty,\Qp}$ are linearly independent from the elements $\Fc_{f'}^{\infty,\Qp}(\omega_{\alpha,\infty})^{2l}$ for $l\neq 0$. By the series expansion \eqref{eq:WeylGroupElements}, it is sufficient to show that there is no elements in $\cheisenberg$ which is mapped to $\sum_{k\geq 0}\tfrac{4l}{2k+1}\Qp_{2k+1,0}^\alpha$ by $\rho$. It is clear, that this is true for all elements $\Hc^\beta_m\in\kfrak$ with $\beta\neq \pm \alpha$ or $m$ odd. Suppose now
\begin{align}
0\aeq \rho(\sum_{i=1}^Iv_i\Hc^{\alpha}_{2n_i}) + v_0\sum_{k\geq 0}\tfrac{4l}{2k+1}\Qp_{2k+1,0}^\alpha\nn\\
\aeq \sum_{i=1}^Iv_i \sum_{k\geq 0} \sum_{s=1}^{2n_i-1}c^{(2n_i)}_s (2k+1)^s \Qp_{2k+1,0}^\alpha
\,+\, 
+ v_0\sum_{k\geq 0}\tfrac{4l}{2k+1}\Qp_{2k+1,0}^\alpha .
\end{align}
where $c^{(n)}_s\in\reals$ are the polynomial coefficients. While $\tfrac{1}{2k+1}$ has no finite series expansion, the first term in the equation are polynomials in $2k+1$ of finite degree. But since this equation must hold for all $k\in\nats_0$, the second and first term must cancel independently.

We show that $\Gmap_{\infty}$ is not surjective. By Proposition 9 in \cite{Kleinschmidt:2021agj}, the element $\Qp_{1,0}^\alpha$ is not in the image of $\rho$. Therefore, there exists no polynomial $f\in\reals[\{\beta\in\Delta\}]$ and $\gamma\in Q$ such that $\Gmap_\infty(e^{\gamma}\otimes f) \eq \Qp_{1,0}^\alpha\be_0$. 
\end{proof}

\begin{corollary}
\label{prop:Infinite_Composition_series}
The cosocle filtration $(W_N)_{N\in\ints_{\geq -1}}$ in Corollary \ref{cor:Invariant_Subspace} is an infinite composition series of the basic representation under the action of $\kfrak$
\begin{align}
\label{eq:Inverse_Limit_WN}
\lim_{\substack{\longleftarrow \\ N\to \infty}} W_N \eq \{0\}\, .
\end{align}
\end{corollary}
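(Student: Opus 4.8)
The plan is to deduce the statement directly from two results already in place: the injectivity of the completed projection $\Gmap_{\infty}$ (Proposition \ref{prop:G_infty_Injective}) and the fact that $\Gmap_{\infty}$ was constructed precisely so that its composition with the canonical projection $\pi_N\colon\verQ(\infty)\to\verQ(N)$ of the inverse limit returns $\Gmap_N$.

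First I would unwind what $\lim_{\leftarrow}W_N$ means. The morphisms of the system $(W_N)_{N\in\ints_{\geq -1}}$ are the inclusions $W_{N+1}\hookrightarrow W_N$; since these are injective, a compatible family is a single vector $x\in\basic$ lying in $W_N$ for every $N$, so $\lim_{\leftarrow}W_N = \bigcap_{N\geq -1}W_N$ as a subspace of $\basic$ (and $W_{-1}=\basic$ contributes no condition). Next I would identify this intersection with $\mathrm{Ker}(\Gmap_{\infty})$: an element of the inverse limit $\verQ(\infty)$ is zero if and only if all its projections $\pi_N$ vanish, hence for $x\in\basic$ one has $\Gmap_{\infty}(x)=0$ iff $\Gmap_N(x)=\pi_N(\Gmap_{\infty}(x))=0$ for all $N\in\nats_0$, i.e. iff $x\in\bigcap_N W_N$. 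Thus $\mathrm{Ker}(\Gmap_{\infty})=\bigcap_N W_N$, and since $\Gmap_{\infty}$ is injective this intersection is $\{0\}$, which is exactly \eqref{eq:Inverse_Limit_WN}.

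To conclude that $(W_N)$ is an \emph{infinite composition series} and not merely a cosocle filtration, I would combine \eqref{eq:Inverse_Limit_WN} with Corollary \ref{cor:Invariant_Subspace}, which already exhibits $(W_N)$ as a strictly descending chain of proper $\kfrak$-invariant subspaces whose successive quotients $W_M/W_{M+1}\simeq\verQ(M+1)_{M+1}$ are finite-dimensional and semisimple; the triviality of the inverse limit is the one remaining clause in the definition from Appendix \ref{app:Cosocle_Filtration}. I do not expect a real obstacle here, since all the genuine work has been done upstream: the content is carried by the injectivity argument in Proposition \ref{prop:G_infty_Injective}, and the only care needed in this corollary is the bookkeeping that identifies $\lim_{\leftarrow}W_N$ successively with $\bigcap_N W_N$ and with $\mathrm{Ker}(\Gmap_{\infty})$.
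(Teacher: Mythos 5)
Your proposal is correct and follows essentially the same route as the paper: both reduce \eqref{eq:Inverse_Limit_WN} to the identification $\lim_{\leftarrow}W_N=\mathrm{Ker}(\Gmap_{\infty})$ and then invoke the injectivity of $\Gmap_{\infty}$ from Proposition \ref{prop:G_infty_Injective}, with the semisimplicity of the quotients supplied by Corollary \ref{cor:Invariant_Subspace}. Your explicit unwinding of the inverse limit as the intersection $\bigcap_N W_N$ is a slightly more careful bookkeeping of what the paper phrases as the universal property, but the content is the same.
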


\begin{proof}
By the definition of an infinite composition series in Appendix \ref{app:Cosocle_Filtration} it remains to show \eqref{eq:Inverse_Limit_WN}. For $\verQ(-1)=\{0\}$, the family $(\verQ(N)_{N\in\ints_{\geq -1}}$
together with the natural projection $\verQ(N+1)\to \verQ(N)$ provides the family $(W_N=\mathrm{Ker}(\Gmap_{N}))_{N\in\ints_{\geq -1}}$ with $\basic/W_N \simeq \verQ(N)$ and implies the natural projective embedding $W_{N+1}\to W_N\supset W_{N+1}$. Then, by the universal property of the inverse limit applied to $\Gmap_N$ which provides $\Gmap_\infty$, the inverse limit of the family $(W_N=\mathrm{Ker}(\Gmap_{N}))_N$ is $W_\infty = \mathrm{Ker}(\Gmap_{\infty}) = \{0\}$ by Proposition \ref{prop:G_infty_Injective}. 
\end{proof}

In this section we derived infinitely many $\kfrak$-subrepresentations of the basic representation as the kernels of infinitely many different $\kfrak$-projections of the basic representations on finite dimensional $\kfrak$-representations. We proved, that these are all finite dimensional $\kfrakpar$-representations onto which the basic representation can project by the pull back of $\rho$. This allowed us to provide the cosocle filtration and to show that it is an infinite composition series of the basic representation. In particular, as vector spaces or $\kfinite$-representations, the basic representation is isomorphic to a direct sum of infinitely many finite-dimensional semisimple $\kfrak$-representations.

In the next section we provide examples and applications of the results.

\section{Examples and applications to supergravity}
\label{sec:Applications_and_examples}
In this section we provide some explicit examples of the projection of the basic representation onto finite dimensional $\kfrak$-representations and we give applications of our results to supergravity in two dimensions. 

First, in Subsection \ref{subsec:Embedding_Map_Explicit} we prove a Proposition which allows to evaluate the generating elements $\Fc^\Hc$ to any state of the basic representation in a closed form and which simplifies the evaluation of $\Fc^{\Qp}$ significantly. Then, we add some comments on the finite dimensional compact subalgebra $\kfinite$ to further analyze the $\kfrak$-representations as $\kfinite$-representations.

Second, in Subsection \ref{subsec:General_examples_embedding} the results of the previous section are applied to obtain explicit examples of invariant subspaces and about the projection of the basic representation onto $\kfrakpar$-representations of parabolic level zero and parabolic level one. We use the proven proposition to give the projection homomorphism explicitly.

Third, in Subsection \ref{subsec:e9_Basic_decomposition} we apply the results in the previous section to the basic representation of the split real $\enn$ algebra with maximal compact subalgebra $\kenn$. We find the $\kenn$-representations of parabolic level zero, one and two which are projections of the basic representation and compare these representations with tensor products of representations of the single parabolic algebra constructed in \cite{Kleinschmidt:2021agj}. This provides an  $\so(16)$-covariant formulation of the $\kenn$-representations on which the basic representation projects. At the end, we discuss applications to supergravity in $D=2$ dimensions.

\subsection{Evaluation of projection} 
\label{subsec:Embedding_Map_Explicit}
To understand the $\kfrak$-subrepresentations of the basic representations explicitly, the homomorphism $\Gmap_N$ must be evaluated on the states $e^\gamma \otimes f$ of the basic representation \eqref{eq:GMap}. By Theorem \ref{theo:VerQ_In_Basic} it remains to evaluate the generating elements $\Fc^{\Qp}_f = \rho(\Fc^{\Hc}_f)$ for polynomials $f\in\reals[\{\alpha\in\Delta\}]$. 
The next Proposition provides an explicit expressions of the generating elements $\Fc^{\Hc}_f$.

\begin{proposition}
\label{prop:Eval_Fc_Q}
Let $\{v^i \in \mathring{\mathfrak{h}}^{\vee}\forwhich 1\leq i\leq r\}$ be an orthonormal basis of $\mathring{\mathfrak{h}}^{\vee}$ and let $\{H^{v,i}\in\mathring{\mathfrak{h}}\forwhich 1\leq i\leq r\}$ be the dual basis with $\Hc^{v,i}_n = \half H^{v,i}(t^n-t^{-n})$, then 
\begin{alignat}{3}
\label{eq:Eval_GeneratingFcH}
a.)& \qquad \qquad && \Fc^{\Hc}_{f_1(\{v^i\})f_2(\{v^j\})}\eq \Fc^{\Hc}_{f_1(\{v^i\}}\Fc^{\Hc}_{f(\{v^j\})} \qquad\quad \textrm{for }i\neq j,&&\nn\\ 
b.)& && \Fc^{\Hc}_{(v_{-l}^i)^n}
\eq
\sum_{k=0}^n c_{l,n,k} \left(\Hc^{v,i}_l\right)^k e^\gamma\otimes 1 
\nn\\
& && \quad c_{l,n,k} \eq 
\begin{cases}
(-1)^n\frac{(2l(n-k-1))!_{(4l)}(2n)!_{(2)}}{k!(2n-2k)!_{(2)}} & \text{ for }n-k \text{ even}\\
0& \text{ otherwise}
\end{cases}\, .&& 
\end{alignat}
where for $n,N\in\nats$, $N!_{(n)} = N(N-n)!_{(n)}$ for $N\geq n$, $N!_{(n)} = N$ for $1\leq n\leq N$, $0!_{(n)} = (-N)!_{(n)} = 1$.
\end{proposition}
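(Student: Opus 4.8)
The plan is to reduce the entire statement to one--variable computations in the polynomial ring $\reals[v^i_{-l}]$, where the generators $\Hc^{v,i}_l$ act as ordinary first--order differential operators. Two preliminary facts make this work. First, $\cheisenberg$ is abelian with basis $\{\Hc^{v,i}_n\forwhich 1\le i\le r,\ n\in\nats\}$, and by \eqref{eq:N_VOARepresentation} a monomial $\prod_{i,n}(\Hc^{v,i}_n)^{k_{i,n}}$ applied to $e^\gamma\otimes 1$ equals a nonzero multiple of $e^\gamma\otimes\prod_{i,n}(v^i_{-n})^{k_{i,n}}$ plus terms of strictly lower degree in the $v^i_{-n}$; hence $\Uc(\cheisenberg)\to\basic$, $U\mapsto U(e^\gamma\otimes 1)$, is a filtered (``triangular'') isomorphism onto the subspace of states of $\gfinite$--weight $\gamma$. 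In particular the generating element $\Fc^{\Hc}_f$ is \emph{unique}, so every identity between generating elements is equivalent to the identity of their images on $e^\gamma\otimes 1$. Second, $\Hc^{v,i}_l=\half(H^{v,i}_l-H^{v,i}_{-l})$ acts on $e^\gamma\otimes f$ only through the direction $v^i$ -- its annihilation part differentiates in $v^i_{-l}$ and, because $(v^i|v^j)=0$ for $j\neq i$, kills every $v^j_{-m}$ -- so on $\reals[v^i_{-l}]$, with $x:=v^i_{-l}$, it induces the first--order operator $A_l=l\,\partial_x-\half x$ (the two constants being read off from \eqref{eq:N_VOARepresentation} and the normalisation of the $v^i$).

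Part a.) is then immediate. The operator $\Fc^{\Hc}_{f_1(\{v^i\})}$ is a polynomial in the $\Hc^{v,i}_l$, all of which leave the $v^j$--variables with $j\neq i$ untouched; so applied to $e^\gamma\otimes f_2(\{v^j\})$ it gives $e^\gamma\otimes\big(g(\{v^i\})\,f_2(\{v^j\})\big)$ for some polynomial $g$, and the case $f_2=1$ shows $g=f_1$. Hence $\Fc^{\Hc}_{f_1}\Fc^{\Hc}_{f_2}(e^\gamma\otimes 1)=\Fc^{\Hc}_{f_1}(e^\gamma\otimes f_2)=e^\gamma\otimes f_1f_2$, and by uniqueness $\Fc^{\Hc}_{f_1}\Fc^{\Hc}_{f_2}=\Fc^{\Hc}_{f_1f_2}$; iterating over the $r$ orthogonal directions yields the general product.

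For part b.) everything happens in the single variable $x=v^i_{-l}$, so $\Fc^{\Hc}_{(v^i_{-l})^n}=\sum_k c_{l,n,k}(\Hc^{v,i}_l)^k$ is a polynomial in the one operator $A_l$, determined by $\sum_k c_{l,n,k}\,A_l^{k}\cdot 1=x^n$. From $A_l=l\,\partial_x-\half x$ one reads off the operator identity $x=2l\,\partial_x-2A_l$ on $\reals[x]$, hence $x^{n+1}=2nl\,x^{n-1}-2A_l(x^n)$; lifting this through $U\mapsto U(e^\gamma\otimes 1)$ and using uniqueness gives the three--term recursion
\begin{align}
\label{eq:Fc_recursion_plan}
\Fc^{\Hc}_{(v^i_{-l})^{n+1}}\eq 2nl\,\Fc^{\Hc}_{(v^i_{-l})^{n-1}}-2\,\Hc^{v,i}_l\,\Fc^{\Hc}_{(v^i_{-l})^{n}}\,,
\end{align}
i.e. $c_{l,n+1,k}=2nl\,c_{l,n-1,k}-2\,c_{l,n,k-1}$, with $c_{l,0,0}=1$, $c_{l,1,1}=-2$ and $c_{l,n,k}=0$ when $n-k$ is odd. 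It remains to check that the displayed closed form solves this. I would rewrite the multifactorials as $(2l(n-k-1))!_{(4l)}=(2l)^{(n-k)/2}(n-k-1)!!$, $(2n)!_{(2)}=2^n n!$, $(2n-2k)!_{(2)}=2^{n-k}(n-k)!$ and use $(n-k-1)!!=(n-k)!\big/\big(2^{(n-k)/2}((n-k)/2)!\big)$, which collapses the coefficient to $c_{l,n,k}=(-1)^n 2^k l^{(n-k)/2}\,n!\big/\big(k!\,((n-k)/2)!\big)$. Plugging this into the recursion, writing $j=\tfrac{n-k-1}{2}$ and clearing denominators, the whole thing reduces to the elementary identity $2(j+1)+k=n+1$, and the two base cases are checked directly. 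Equivalently, $A_l^{k}\cdot 1$ is a scalar multiple of the Hermite polynomial $\mathrm{He}^{(2l)}_k(x)$, so the $c_{l,n,k}$ are exactly the coefficients of $x^n$ in its Hermite expansion, which reproduces the same formula.

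The conceptual content sits entirely in the two preliminary facts; the rest is bookkeeping. The only point needing care is matching the multifactorial notation $N!_{(m)}$ of the statement with the double--factorial/binomial form used in the recursion check, together with fixing the two constants in $A_l$ so that the factor $2nl$ in \eqref{eq:Fc_recursion_plan} comes out correctly. There is no real obstacle beyond this routine computation.
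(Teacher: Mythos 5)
Your proof is correct and follows essentially the same route as the paper: part \emph{a.)} via orthogonality of the $v^i$, and part \emph{b.)} by realizing multiplication by $v^i_{-l}$ as the action of $H^{v,i}_{-l}$, extracting a recursion for the coefficients $c_{l,n,k}$, and checking the closed form against it — your three-term recursion $c_{l,n+1,k}=2nl\,c_{l,n-1,k}-2c_{l,n,k-1}$ is an equivalent rearrangement of the paper's $c_{l,N+1,k}=-l(k+1)c_{l,N,k+1}-2c_{l,N,k-1}$, the derivative term being bookkept on the polynomial side ($H^{v,i}_l$ acting on $x^n$) rather than on the operator side (the commutator $\left[H^{v,i}_{-l},(\Hc^{v,i}_l)^k\right]$), and both recursions are satisfied by the stated closed form. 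The one caveat, which you rightly flag as needing care, is the normalisation of $A_l$: a literal reading of \eqref{eq:N_VOARepresentation} with $(v^i|v^i)=1$ gives $\Hc^{v,i}_l=\tfrac{l}{2}\partial_x-\tfrac12 x$ rather than your $l\,\partial_x-\tfrac12 x$, but your choice is the one consistent with the stated coefficients $c_{l,n,k}$ (and with the commutator $\left[H^{v,i}_{-l},(\Hc^{v,i}_l)^k\right]=-kl(\Hc^{v,i}_l)^{k-1}$ used in the paper's own proof), so this is a normalisation ambiguity of the statement itself rather than a gap in your argument.
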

\begin{rem}
By \eqref{eq:Eval_GeneratingFcH} it is straightforward to evaluate the generating elements $\Fc^{\Hc}_f$ for any $f\in\reals[\{\alpha\in\Delta\}]$. The Lie algebra homomorphism $\rho^N$ maps group like the generating elements $\Fc^{\Hc}$ to the parabolic generating elements $\Fc^\Qp\in\Uc(\pheisenberg)$. Because of the parabolic grading, and because every element of the parabolic Heisenberg algebra has at least parabolic level one, the Lie algebra homomorphism $\rho^N$ is easy evaluated for low $N$. 
\end{rem}

\begin{proof}
The loop generators of the dual roots $H^{v,i}_n$ commute and therefore $H^{v,i}_n$ does not act on $e^{\gamma}\otimes (v^j_{-l})^n$ for $i\neq j$ and $l,n\in\nats$, which proves \textit{a}.

We prove \textit{b} by induction in $n$. For $n=0$ the statement is  true and for $n=1$, $c_{l,1,k} = -2\delta_{k,1}$ and
\begin{align}
\Fc_{v^i_{-l}} = -2\Hc_{l}^{v,i}\,,
\end{align}
the statement is also true. Assuming \textit{b} is true for $n\leq N$ and applying the commutation $[H^{v,i}_{-l},(\Hc^{v,i}_{-l})^k] = -kl(\Hc^{v,i}_{-l})^{k-1}$ results in
\begin{align}
\label{eq:prf_prop_Form_FH}
\Fc_{(v^i_{-l})^{N+1}} 
\aeq H^{v,i}_{-l}\sum_{k=0}^n c_{l,N,k} (\Hc_{-l}^{v,i})^k\nn\\
\aeq -lc_{l,n,1} +
\sum_{k=1}^N -\left((k+1)lc_{l,N,k+1}+2c_{l,N,k-1}\right) (\Hc_{-l}^{v,i})^k 
- 2c_{l,N,N}(\Hc_{-l}^{v,i})^{N+1}\,.
\end{align}
From the explicit form of $c$ follow the identities
\begin{align}
c_{l,N+1,0} \aeq -lc_{l,N,1}\, ,\qquad c_{l,N+1,N+1} \eq -2c_{l,N,1}\nn\\
c_{l,N+1,k} \aeq -l(k+1)c_{l,N,k+1}) - 2c_{l,n,k-1}\, ,
\end{align}
which, inserted in \eqref{eq:prf_prop_Form_FH} prove \textit{b}.
\end{proof}

The Proposition allows to evaluate the homomorphism $\Gmap_N$, which projects $\basic$ onto $\verQ(N)$. To analyze the $\kfrak$-representations $\verQ(N)$, they may be decomposed into $\kfinite$-representations. Therefore, let us first consider the quotient algebra $\kfrakpar^0$ which is also a finite dimensional subalgebra of the double parabolic algebra
\begin{align}
\kfrakpar^0 \eq \left\lbrace P_{0,a}^\alpha\forwhich \alpha\in\Delta,\; a\in\{0,1\}\right\rbrace\,.
\end{align}
By definition these are the only elements of the double parabolic algebra which preserve the parabolic level of states in the parabolic representation. The Lie bracket of these elements is
\begin{align}
\left[P^\alpha_{0,a},P_{0,b}^\beta\right] \aeq 
\epsilon(\alpha,\beta) \,\left(P_{0,a+b}^{\alpha+\beta}+ P^{\alpha-\beta}_{0,a+b}\right)\, .
\end{align}
Therefore, $\kfrakpar^0$ is a $\ints_2$ graded algebra $\kfrakpar^0 \eq \kfinite_0\oplus\kfinite_1$, where $\kfinite_0\simeq\kfinite$ as Lie algebras and $\kfinite_1\simeq\kfinite$ as vector spaces. In particular $\kfinite_0$ is a subalgebra as well and equivalent to the semisimple Lie algebra $\kfinite$. This analysis helps to analysis the decomposition of $\verQ(N)$ in $\kfinite$-representations in Subsection \ref{subsec:e9_Basic_decomposition}. However, first, let us provide general examples of the projections $\Gmap_N$ in the next subsection.

\subsection{General examples}
\label{subsec:General_examples_embedding}
In this section we provide the projection of the basic representation on the parabolic level zero and parabolic level one representations $\verQ(0)$ and $\verQ(1)$ and give the projections $\Gmap_0$ and $\Gmap_1$ explicitly. We analyze the representations $\verQ(0)$ and $\verQ(1)$ in more detail and add some comments on the decomposition into $\kfinite$-representations. 
It is straightforward to generalize this to any parabolic level $N\in\nats_0$ and representation $\verQ(N)$.

\subsubsection{Parabolic level zero}
\label{subsubsec:Par_0}
The simplest $\kfrak$-representation onto which the basic representation can be projected is of parabolic level zero. By Theorem \ref{theo:VerQ_In_Basic} this representation is $\verQ(0)$ given in Proposition \ref{prop:VerQRepresentation}. Let us analyze this representation in more detail.

Because the representation has parabolic level zero, all parabolic generators of higher parabolic level than zero act trivial. In particular $\omega^2_\alpha -1 \in\ideal_0$ acts trivial and implies that for $\alpha\in\Delta$ and $\gamma\in \Qr$, $\be_{\gamma+2\alpha } - \be_{\gamma}\sim 0 \in\verQ(0)$. Therefore, the representation has dimension $\textrm{dim}(\verQ(0)) = 2^r$, where $r$ is the rank of the finite algebra $\gfinite$.
The action of the parabolic generators of level $0$ is given in Proposition \ref{prop:VerQRepresentation} in terms of the extended $\Qp$-Schur polynomials 
$S^{0,\Qp}({2n}) = 1$ and $S^{0,\Qp}({2n+1})=0$ \eqref{eq:App_Q_Schur_Poly} 
\begin{align}
P^\alpha_{0,a} \be_{\gamma} \eq 
\epsilon(\alpha,\gamma)
\begin{cases}
0&\textrm{if }(\alpha|\gamma) + a \textrm{ even}\\
\be_{\gamma+\alpha}&\textrm{if }(\alpha|\gamma) + a \textrm{ odd.}
\end{cases}
\end{align}
This action 
allows us to analyze the decomposition of $\verQ(0)$ into $\kfinite$-representations. 

The representation is generated from the action of $P_{0,1}^\alpha$ and $P_{0,1}^{(\alpha} P_{0,1}^{\beta)}$ for different $\alpha\in\Delta$ on $\be_0$, because for $\alpha,\beta\in\Delta$, and $\alpha+2\beta\in\Delta$, then $\beta = -\alpha$.
Explicitly we obtain 
\begin{alignat}{23}
\label{eq:GeneralPar0}
\be_{0} \,&\ \longrightarrow\ \, &&\mathbf{1}_{\kfinite}
\nn\\
P_{0,1}^\alpha \be_0 \eq \be_\alpha \, & \ \longrightarrow\ \, &&\adj_{\kfinite}
\nn\\
P_{0,1}^\alpha P_{0,1}^\beta \be_0 \eq\half \{P_{0,1}^\alpha ,P_{0,1}^\beta\} \be_0 \, &\  \ \; \subset  \, &&\left(\adj_{\kfinite}\otimes_{\textrm{sym}} \adj_{\kfinite}\right)\,.
\end{alignat}
The first line is a singlet under $\kfinite$. The second line is the adjoint representation of $\kfinite$ and the last line takes values in the symmetric product of two adjoint representations of $\kfinite$.
The $\kfinite$ singlet in the last line is proportional to $\be_0$ because $P_{0,1}^\beta P_{0,1}^\alpha = \eta^{\alpha,\beta} \be_0$. All together, $\verQ(0)\subset \adj(\kfinite)\otimes \adj(\kfinite)$.
This provides some inside in the decomposition of $\verQ(0)$ into $\kfinite$-representations. 

The basic representation is projected onto $\verQ(0)$ with the homomorphism $\Gmap_0$ in Theorem \ref{theo:VerQ_In_Basic}. By Proposition \ref{prop:Eval_Fc_Q} and the Lie algebra homomorphism $\rho^0$ we can evaluate the projection in terms of the generating elements $\Fc^{0,\Qp}_f$ efficiently. For the case of parabolic level zero, all Heisenberg generators act trivial and one evaluates \eqref{eq:Eval_GeneratingFcH} for $k=0$. Then the projection $\Gmap_0$ acts on the basic representation by
\begin{align}
\Gmap_0\left(e^{\gamma}\otimes \prod_{i=1}^r (v^{i}_{-l_i})^{n_i}\right)
\aeq
\begin{cases}
\left(\prod_{i=1}^r(2l_i(n_i-1))!_{4l_i}\right)\be_{\gamma} & \textrm{for all }n_i\textrm{ even}\\
0& \textrm{otherwise}\,.
\end{cases}
\end{align}
By Corollary \ref{cor:Invariant_Subspace}, the kernel of $\Gmap_0$ is a $\kfrak$-invariant subspace. For instance all elements of the basic representation which contain in vertex operator realization an odd power of a indexed simple root are in the kernel of $\Gmap_0$.

\subsubsection{Parabolic level one}
\label{subsubsec:Par_1}
Let us decompose the parabolic level one representations $\verQ(1)$. The states of the representation are 
\begin{align}
\verQ(1) = \left\lbrace\be_{\gamma}\, ,\ \Qp_{1,a}^{\alpha^i}\be_{\gamma}\forwhich \gamma\in Q ,\, \alpha^i\in\Pi,\, a\in\{0,1\}\right\rbrace\, .
\end{align}
Therefore, the representation has dimension $\textrm{dim}(\verQ(1)) = 2^r + 2\cdot r\cdot 2^r$. The states $\Qp_{1,a}^{\alpha^i}\be_\gamma$ are strictly of parabolic level one while the states $\be_\gamma$ have contributions from parabolic level zero and one. 
Thus, to parabolic level zero, the representation $\verQ(1)$ has $2^r$ number of states contained in $\be_\gamma$. The remaining states are $\Qp^i_{1,a}\be_\gamma$, which accounts for $2\cdot r\cdot 2^r$ number of states at parabolic level one. These are all the states of the representation and therefore, parabolic level one contribution in $\be_\gamma$ are necessarily linear combinations of the $2\cdot r\cdot 2^r$ states $\Qp^{\alpha^i}_{1,a}\be_\gamma$.

The generators of parabolic level higher than one act trivial and the action of the other generators is given in \eqref{eq:Action_Ec_on_be_alpha} and in terms of the extended $\Qp$-Schur polynomials
\begin{align}
S_\alpha^{1,\Qp}(2n) \eq
1 + 4n \Qp^\alpha_{1,0},\qquad
S_\alpha^{1,\Qp}(2n+1) \eq 4(n+1)\Qp^\alpha_{1,1}\, .
\end{align}
The action of the parabolic generators is then evaluated from \eqref{eq:Action_Ec_on_be_alpha}
\begin{align}
P^\alpha_{0,a} \be_{\gamma} 
\aeq 
\epsilon(\alpha,\gamma)
\begin{cases}
1-2\left((\alpha|\gamma)+2\right)\Qp^\alpha_{1,0}
& \textrm{ if }(\alpha|\gamma)+a \textrm{ is odd}\\
-2(\alpha|\gamma)\Qp^\alpha_{1,1}
&\textrm{ if }(\alpha|\gamma)+a  \textrm{ is even.}
\end{cases}
\nn\\
P_{1,a}^\alpha\be_\gamma 
\aeq 
\epsilon(\alpha,\beta)\Qp^\alpha_{1,a+1+(\alpha,\beta)}\be_{\alpha+\gamma}\,.
\end{align}
On the states at level one, the generators $P^\alpha_{1,a}$ act trivial while the action of $P^\alpha_{0,a}$ is given by the commutation relations. 

The homomorphism $\Gmap_1$ from Theorem \ref{theo:VerQ_In_Basic} 
can be evaluated by Proposition \ref{prop:Eval_Fc_Q}. For parabolic level one, products of more than one Heisenberg generators act trivial. Therefore, \eqref{eq:Action_Ec_on_be_alpha} must be evaluated at $k=0$ and $k=1$ and thus, the homomorphism $\Gmap_1$ of $\verQ(1)$ is 
\begin{align}
&\Gmap_1\left(e^{\gamma}\otimes \prod_{i=1}^r (v^{i}_{-l_i})^{n_i}\right)
\nn\\
&\ \eq
\begin{cases}
\left(\prod_{i=1}^r(2l_i(n_i-1))!_{4l_i}\right)\be_{\gamma} & \textrm{for all }n_i\textrm{ even}\\
\left(\prod_{j\neq i=1}^r(2l_i(n_i-1))!_{4j_i}\right)4n_j (2l_j(n-2))!_{(4l_j)}\Qp^{\alpha^j}_{1,l_j} \be_{\gamma} & \textrm{for }n_j \textrm{ odd, } n_i \textrm{ even for }i\neq j\\
0& \textrm{otherwise}\,.
\end{cases}
\end{align}
It is straightforward to evaluate the projection of the basic representation onto even higher parabolic representations by applying Proposition \ref{prop:VerQRepresentation}, Theorem \ref{theo:VerQ_In_Basic} and Proposition \ref{prop:Eval_Fc_Q}.

To obtain a better understanding of the $\kfrak$-representations in Proposition \ref{prop:VerQRepresentation} on which the basic representation projects, we decompose these representations into $\kfinite$-representations.
In general, this does not lead to new obstacles but may be achieved with standard techniques of representation theory, but it requires a suitable basis change of the generators in \eqref{eq:Com_Finite_Algebra} to a $\kfinite$-covariant basis. Therefore it seems useful to restrict to a specific affine Kac-Moody algebra which we do in the next subsection.

\subsection{Projections of $\mathfrak{e}_9$ basic representation}
\label{subsec:e9_Basic_decomposition}
Certainly, one of the most interesting split real simply-laced affine Kac-Moody algebras is $\enn$ with maximal compact subalgebra $\kenn$. In this section we decompose the finite dimensional $\kenn$-representations $\verQ(0)$, $\verQ(1)$ and $\verQ(2)$ on which the basic representation projects under the finite compact subalgebra $\mathring{k}(\enn) = \so(16)$. This is an $\so(16)$-covariant form of the $\kenn$-representations which has the additional benefit, that it is significantly easier to obtain subrepresentations of $\verQ(N)$.

Instead of performing the explicit $\so(16)$-decomposition of $\verQ(N)$, we rather apply the construction of $\so(16)$-covariant $\kenn$-representations in \cite{Kleinschmidt:2021agj} to build up $\so(16)$-covariant $\kenn$-representations which are equivalent to $\verQ(0)$, $\verQ(1)$ and $\verQ(2)$. This is equivalent to a direct decomposition of these $\kenn$-representations. 
In this work we restrict the analysis to dimensional arguments, because the explicit analysis to show that the $\so(16)$-covariant $\kenn$-representations are equivalent to $\verQ(N)$ for some $N\in\nats_0$ would exceed the scope of this paper. However, the explicit analysis finds important application in supergravity in two dimensions as it describes the yet unknown supersymmetry of vector fields and the decomposition of the embedding tensor into fermion bilinears. Therefore, we present it in a subsequent paper.  

First, we introduce an $\so(16)$-covariant notation of the algebras of interest. Second, we construct $\so(16)$-covariant $\kenn$-representation along \cite{Kleinschmidt:2021agj} which are equivalent to $\verQ(N)$ for $N\leq 2$. Third, we apply these concrete results to supergravity in two dimensions.

\subsubsection{$\so(16)$-covariant formulation}
\label{subsubsec:e9_so16_covariant}
A thorough introduction of $\kenn$ and the single parabolic algebra in $\so(16)$-covariant formulation is provided in \cite{Kleinschmidt:2021agj} section 5.1 and 5.2. Here, only the necessary notation and identities for this work are introduced.
 
The finite dimensional Lie algebra $\eee$ has dimension $\dim(\eee)=248$, its maximal subalgebra is $\so(16)$ with $120$ generators $X^{IJ} = -X^{JI}$ for $1\leq I,J\leq 16$. The non-compact orthogonal complement transforms as the $128_{\so(16)}$ dimensional $\so(16)$-spinor representation with basis elements $Y^A$ for $1\leq A\leq 128$. In this $\so(16)$-covariant formulation, the Lie bracket and the Killing form are
\begin{multicols}{2}
\noindent
\begin{align}
\left[ X^{IJ} , X^{KL} \right] &= 4 \delta^{JK} X^{IL} \,,\nn\\
\left[ X^{IJ} , Y^B \right] &= -\half \Gamma^{IJ}_{AB} Y^B\,,\nn\\
\left[ Y^A, Y^B \right] &= \tfrac{1}{4} \Gamma^{IJ}_{AB} X^{IJ}\,,\nn 
\end{align}
\begin{align}
\label{eq:LieBracket_ee}
\kappa(X^{IJ},X^{KL}) \aeq -2\delta^{IK}\delta^{JL}\, ,\nn\\
\kappa(Y^{A},Y^{B}) \aeq \delta^{AB}\, . 
\end{align}
\end{multicols}
For $\eee$ with positive roots $\alpha\in\Delta_+$, we formally write the basic change from $E^\alpha$ and $H^\alpha$ to the $\so(16)$-covariant basis by
\begin{align}
X^{IJ} \aeq \sum_{\alpha\in\Delta_+}x_{\alpha}^{IJ} \left(E^\alpha + E^{-\alpha}\right)\nn\\
Y^{A} \aeq \sum_{\alpha\in\Delta_+}y_{\alpha}^{A} \left(E^\alpha - E^{-\alpha}\right) \, +\, \sum_{i=1}^Ry_{i}^{A} H^{\alpha^i} 
\end{align}
For the inverse basis change one might use the coefficients $x^\alpha_{IJ}$ and $y^\alpha_{A}$, $y^i_{A}$. To explicitly branch the finite dimensional $\kenn$-representations $\verQ(N)$ under $\so(16)$ many useful relations of the coefficients of the basis change may be derived. This explicit analysis is performed in a subsequent paper while we restrict here on dimensional arguments.
 
The Lie bracket of the affine algebra $\enn$ is given by \eqref{eq:LieBracket_ee} and \eqref{eq:Lie_Bracket_Affine}. 
Therefore, this basis change extends straightforward to the affine algebra $\enn$ and the maximal compact subalgebra $\kenn$. 
On the parabolic algebra $\kfrakpar(\enn)$ the basis change is
\begin{alignat}{2}
\label{eq:Basis_Change_Para}
A^{IJ}_{2k,a} \aeq X^{IJ}\otimes u^{2k}\otimes r^a &&\eq \sum_{\alpha\in\Delta_+} x_{\alpha}^{IJ} P^\alpha_{2k,a}\nn\\
S^{A}_{2k+1,a} \aeq Y^{A}\otimes u^{2k+1}\otimes r^a &&\eq \sum_{\alpha\in\Delta_+}y_{\alpha}^{A} P^\alpha_{2k+1,a} \, +\, \sum_{i=1}^r y_{i}^{A} \Qp^{\alpha^i}_{2k+1,a} \, .
\end{alignat}
Setting $r=1$ the double parabolic algebra restricts to the  single parabolic algebra of $\enn$ in \cite{Kleinschmidt:2021agj} section 5. 

This sets up the $\so(16)$-covariant notation of $\enn$, $\kenn$ and the parabolic algebra $\kfrakpar(\enn)$. Next, let us also give the $\eee$ and $\so(16)$-decomposition of the first few loop levels of the basic representation
\begin{align}
\label{eq:Basic_e9_Decomposed}
\basic \aeq \left(\mathbf{1}_{\eee}\right)_0\; \oplus\; \left( \mathbf{248}_{\eee}\right)_1\; \oplus\;\left(\mathbf{1}_{\eee}\;\oplus\; \mathbf{248}_{\eee} \oplus \;\mathbf{3875}_{\eee}\right)_2\; \oplus\; \ldots\nn\\[5pt]
\aeq \left( \mathbf{1}_{\so(16)}\right)_0 \;\oplus\; 
\left(\mathbf{120}_{\so(16)}\;\oplus\; \mathbf{128}_{\so(16)}\right)_1  \; \oplus\; \\[5pt]
& \;  \left(\mathbf{1}_{\so(16)}\;\oplus
\mathbf{120}_{\so(16)}\,
\oplus\,\mathbf{135}_{\so(16)}\,
\oplus\,\mathbf{1820}_{\so(16)}\,
\oplus\,\mathbf{128}_{\so(16)}\,
\oplus \,\mathbf{1920}_{\so(16)}\,\right)_2\, \oplus\ldots \nn
\end{align}
The decomposition to higher levels is in \eqref{app:Decompose_enn_Basic}, it provides some intuition about the $\so(16)$-representations to expect in the $\so(16)$-branching of $\verQ(N)$ on which the basic representation projects $\kfrak$-covariant. For example, the basic representation does not contain at any level the $\mathbf{16}_{\so(16)}$ vector representation nor the conjugate spinor representation $\overline{\mathbf{128}}_{\so(16)}$.
More generally, the basic representation only contains $\so(16)$-representations which are also in the root lattice of $\eee$. 
We use these constraints about possible $\so(16)$-representations in the basic representation to construct the $\so(16)$-decomposition of $\verQ(N)$ for $N\leq 2$ in the next subsection

\subsubsection{Embedding of $\so(16)$-covariant representations and applications}
\label{subsubsec:Embedding_so(16)_covariant}
The plan of this subsection is to build up $\so(16)$-covariant $\kenn$-representations along section 5.3 in \cite{Kleinschmidt:2021agj} which have the same dimensions as $\verQ(N)$ for $N=0,1,2$ at every parabolic level. 
This is a strong indication that the different $\so(16)$-covariant representations are indeed equivalent to the different $\verQ(N)$. 
Therefore, first we argue that the basic representation cannot project on a pure representation of the single parabolic algebra but only on a tensor product. Then, we sketch the construction of representations of the single parabolic algebra in \cite{Kleinschmidt:2021agj} and we build up the representations $\verQ(N)$ for $N=0,1,2$ from tensor products of representations of the single parabolic algebra. 

By the different action of $P_{0,0}^\alpha$ and $P_{0,1}^\alpha$ of the double parabolic generators on $\verQ(N)$ in \eqref{eq:GeneralPar0} it is evident that the basic representation cannot project onto pure representation of the single parabolic algebra. Therefore, the representations onto which the basic representation can be projected must be at least a tensor product of two representations of the single parabolic algebra which are pulled back to representations of $\kfrak$ by the two different Lie algebra homomorphisms $\rho_{\pm}$ (for details see also Footnote \ref{ftn:Singel_Double_Parabolic}). 
Henceforth, we consider the tensor product of two representations $\Phi$ and $\Psi$ of the single parabolic algebra and it remains to build up $\Phi$ and $\Psi$ such that the tensor product is equivalent to $\verQ(0)$, $\verQ(1)$ and $\verQ(2)$. 
Because the representations decompose in the different parabolic levels, the analysis can be carried out for the different levels individually. Therefore, we branch $\Phi$ and $\Psi$ in $\so(16)$-representations $\varphi^X_k$ and $\psi^X_k$ where $X$ is the index of the $\so(16)$ representation and $k$ is the parabolic level. 

Now, let us describe how to construct the representations $\Phi$ and $\Psi$ of the single parabolic algebra. One start with an $\so(16)$-representations $\varphi^X_0$ and $\psi^Y_0$ at parabolic level zero. The next parabolic levels of the representation are constructed as a Verma module of the parabolic generators  with positive parabolic level \eqref{eq:Basis_Change_Para} acting on $\varphi^X_0$ and $\psi^Y_0$.  From the Verma module, finite dimensional representations are obtained by quotienting with subrepresentations. Here, an $\so(16)$-covariant basis of the generators \eqref{eq:Basis_Change_Para} already implies that the full representation will be $\so(16)$-covariant.

\paragraph{Parabolic level zero.}
For $N\in\nats$, the elements of $\verQ(N)$ at parabolic level zero are
\begin{align}
\left\lbrace \be_\gamma \in\verQ(0)\forwhich \gamma\in \Qr\right\rbrace\, .
\end{align}
These are $2^8 = 256$ inequivalent elements by the equivalence relation \eqref{eq:Equi_Relation}.
Thus, the tensor product of the two representation $\varphi_0$ and $\psi_0$ must have dimension $256$ and the decomposition of the tensor product into $\so(16)$-representations takes weights in the $\eee$ weight lattice.
The general pattern of $\verQ(0)$ in \eqref{eq:GeneralPar0} also predicts the tensor product to decompose into an $\so(16)$-singlet, an $\so(16)$-adjoint representation and an $\so(16)$-representation in the symmetric tensor product of two $\so(16)$-adjoint representations.

These constraints seem to be uniquely solved by $\varphi_0$ and $\psi_0$ the vector representation of $\so(16)$ with components $\psi_0^I$ and $\varphi_0^I$ for $I=1,\dots,16$.
The action of the generators $A^{IJ}_{0,0}$ is given by $\so(16)$-covariance, while the action of the generators $A^{IJ}_{0,1}$ is
\begin{align}
\label{eq:action_on_16_16}
A^{IJ}_{0,1}\, \varphi_0^I \eq A^{IJ}_{0,0} \,\varphi_0^I && A^{IJ}_{0,1} \psi_0^I \eq -A^{IJ}_{0,0} \psi_0^I\, . 
\end{align}
In fact, the tensor product of these two representation has the right dimension $16\cdot 16 = 256$ and decomposes into representations which also appear in \eqref{eq:Basic_e9_Decomposed}
\begin{align}
\sorep{16}{16}\otimes \sorep{16}{16} \eq \sorep{1}{16}\oplus\sorep{120}{16}\oplus \sorep{135}{16}\, .
\end{align}
The $\sorep{120}{16}$ is the adjoint representation and the $\sorep{135}{16}$ representation appears in the symmetric product of two adjoint representations $\sorep{120}{16}\otimes_{\textrm{sym}} \sorep{120}{16} \eq \sorep{1}{16} \oplus \sorep{135}{16}\oplus \sorep{1820}{16}\oplus \sorep{5403}{16}$.
The singlet is
\begin{align}
\label{eq:En_Reps_0_1}
\psi_0^K\otimes \varphi_0^K \otimes 1 \quad \rightarrow\quad  \sorep{1}{16}\, 
\end{align}
and the double parabolic generators \eqref{eq:action_on_16_16} map this singlet into the $\sorep{120}{16}$ representation and the $\sorep{135}{16}$ representation
\begin{alignat}{3}
\label{eq:En_Reps_0_2}
A^{IJ}_{0,1}\left( \psi_0^K\otimes \varphi_0^K\right) 
\aeq  
4 \, \psi_0^{[J}\otimes \varphi_0^{I]} \quad &&\rightarrow\quad \sorep{120}{16}\nn\\
A^{KL}_{0,1}\left( \psi^{[J}\otimes \varphi^{I]} \right) 
\aeq  
4\,\delta^{LI} \psi_0^{(K}\otimes \varphi_0^{J)} 
\quad 
&&\rightarrow\quad \sorep{1}{16}\oplus \sorep{135}{16}\,.
\end{alignat}
as it is predicted in the general analysis in \eqref{eq:GeneralPar0}. This is the $\so(16)$-decomposition of $\verQ(0)$.

\paragraph{Parabolic level one.}
Next, for $N\geq 1$ we analyze $\verQ(N)$ at parabolic level one with elements 
\begin{align}
\left\lbrace \be_\gamma, \, \Qp^i_{1,0}\be_\gamma,\, \Qp^i_{1,1}\be_\gamma \in\verQ(1) \forwhich \gamma \in \Qr,\, 1\leq i\leq 8 \right\rbrace
\end{align}
There are $256 \, + \, 2 \cdot 8 \cdot 256$ inequivalent elements. However, $256$ of these elements are of parabolic level zero and therefore,  $2 \cdot 8 \cdot 256$ of these elements are of of parabolic level one. This implies restrictions on the possible $\so(16)$-representations at level one in $\Phi$ and $\Psi$. 

By the construction in \cite{Kleinschmidt:2021agj}, the level one representations of $\Phi$ and $\Psi$ is the Verma module action of $S^A_{1,0}$ on $\varphi^I_0$ and $\psi^I_0$
\begin{align}
\label{eq:action_S1}
S^A_{1,0}\, \varphi^I_0 \aeq \Gamma^I_{A,\dot A} \varphi_1^{\dot A} \, + \, \varphi_1^{AI} &&
S^A_{1,0}\, \psi^I_0 \eq \Gamma^I_{A,\dot A} \psi_1^{\dot A} \, + \, \psi_1^{AI}\, ,
\end{align}
where $\varphi_1^{\dot A}$ and $\psi^{\dot A}_1$ are the conjugate spinor representations $\sorep{\overline{128}}{16}$, $\varphi_1^{IA}$ and $\psi^{IA}_1$ are the $\sorep{\overline {1920}}{16}$ representations and $\Gamma^I_{A\dot A}$ are the $\so(16)$ gamma matrices.

Then, at level one, the tensor product of $\Phi$ and $\Psi$ has dimensions $2 \cdot 16 \cdot (128 + 1920)>2 \cdot 8\cdot 256$ which is clearly larger than the dimension of $\verQ(N)$ at level one. However, the $\sorep{1920}{16}$ representations $\varphi_1^{IA}$ and $\psi^{IA}_1$ can be quotiented from $\Phi$ and $\Psi$ and therefore they can be set to $0$ \cite{Kleinschmidt:2021agj}. Hence, the representations $\Phi$ and $\Psi$ are up to parabolic level one
\begin{align}
\Phi^{I,\dot{A},\dots} \eq \left(\varphi_0^I,\, \varphi_1^{\dot A},\, \ldots\right)&& \Psi^{I,\dot{A},\dots} \eq \left(\psi_0^I,\, \psi_1^{\dot A},\, \ldots\right)\,
\end{align}  
and the action of the double parabolic algebra is uniquely given by $\so(16)$-covariance, \eqref{eq:action_on_16_16} and \eqref{eq:action_S1}. On the tensor product, up to parabolic level one, the double parabolic algebra acts by
\begin{align}
\label{eq:En_Reps_1_2}
S^{A}_{1,0}\left( \psi_0^I\otimes \varphi_0^J\right) 
\aeq  
\Gamma_{A,\dot A}^J\,\psi_0^{I}\otimes  \varphi_1^{\dot A} \,+\, \Gamma_{A,\dot A}^I\,\psi_1^{\dot A}\otimes  \varphi_0^{J}
\nn\\
S^{A}_{1,1}\left( \psi_0^I\otimes \varphi_0^J\right) 
\aeq  
\Gamma_{A,\dot A}^J\,\psi_0^{I}\otimes  \varphi_1^{\dot A} \,-\, \Gamma_{A,\dot A}^I\,\psi_1^{\dot A}\otimes  \varphi_0^{J}.
\end{align}
Indeed, these representations on the right hand side are exactly the representations to expect in $\verQ(N)$ at parabolic level one because
\begin{align}
2\cdot\left( \sorep{16}{16}\otimes \sorep{\overline {128}}{16}\right) \aeq 2\cdot\left(\sorep{128}{16}\oplus \sorep{1920}{16}\right)\nn\\
 \,\simeq& \;
 2\cdot 2048 \,\simeq\, \dim(\verQ(1))-\dim(\verQ(0))\, .
\end{align} 
This fixes $\Phi$ and $\Psi$ up to parabolic level one and together with parabolic level zero determines the $\so(16)$-decomposition of $\verQ(1)$.

\paragraph{Parabolic level two.} The elements in $\verQ(N)$ for $N\geq 2$ which have contribution to parabolic level $2$ are
\begin{align}
\left\lbrace \be_\gamma, \, \Qp^i_{1,0}\be_\gamma,\, \Qp^i_{1,1}\be_\gamma, 
\, \Qp^i_{1,0}\Qp^j_{1,0}\be_\gamma,\, \Qp^i_{1,1}\Qp^j_{1,0}\be_\gamma, \, \Qp^i_{1,1}\Qp^j_{1,1}\be_\gamma,\, \Qp^i_{1,1}\be_\gamma\forwhich \gamma \in \Qr,\, 1\leq i,j\leq 8 \right\rbrace
\end{align}
These are $256 + 2 \cdot 8 \cdot 256 + (2\cdot 36 + 64)\cdot 256$ independent elements, however there are $256 + 2\cdot 8\cdot 256$ elements at parabolic level zero and one, such that 
\begin{align}
(2\cdot 36 + 64)\cdot 256 \eq 34816 
\end{align}  
independent elements remain at parabolic level $2$. Let us build the representations $\Phi$ and $\Psi$ such that the tensor product has at level $2$ the same number of elements. 

In \cite{Kleinschmidt:2021agj} Section 5.3, the second parabolic level of the $\Phi$ and $\Psi$ is evaluated and it consists of the $\so(16)$-vector representation and the three form $\sorep{560}{16}$, such that $\Phi$ and $\Psi$ are 
\begin{align}
\label{eq:Phi_And_Psi}
\Phi^{I,\dot{A},J,[KLM]} \eq \left(\varphi^I_0,\varphi^{\dot{A}}_1,\varphi^{J}_2,\varphi^{[KLM]}_2\right)\, , &&
\Psi^{I,\dot{A},J,[KLM]} \eq \left(\psi^I_0,\psi^{\dot{A}}_1,\psi^{J}_2,\psi^{[KLM]}_2\right)\,.
\end{align} 
The tensor product of these two representations at parabolic level $2$ has the components
\begin{align}
\varphi^I_0 \otimes \psi^{J}_2\, , \quad \varphi^I_0 \otimes \psi^{[KLM]}_2\, , \quad \varphi^{\dot{A}}_1\otimes\psi^{\dot{A}}_1\, , \quad\varphi^{J}_2\otimes\psi^I_0\, , \quad\varphi^{[KLM]}_2\otimes\psi^I_0
\end{align}
which are indeed the number of elements of $\verQ(N)$ at parabolic level $2$
\begin{align}
16\cdot 16 \, + \, 16\cdot 560 \, + \, 128\cdot 128 \, + \, 16\cdot 16 \, + \, 560\cdot 16 \eq 34816\,.
\end{align}
Together with the decomposition of parabolic level zero and one above this is the $\so(16)$-covariant form of $\verQ(2)$.

This is a quite remarkable result and shows how the basic representation projects onto a tensor product of two different `spinor'-representations of the single parabolic algebra. 
However, this has even more consequences to obtain quotients of subrepresentations of $\verQ(N)$ which are also projections of the basic representation. 
Usually, if these quotients by subrepresentations do not correspond to a representation $\verQ(M)$ with $M<N$, they are very hard to identify since they must the subrepresentations must be subrepresentations of the full double parabolic algebra and not only of the single parabolic algebra. However, since we identified $\verQ(N)$ for $N\leq 2$ with the tensor product of $\Phi$ and $\Psi$, the tensor product of subrepresentations of $\Phi$ and $\Psi$ are also subrepresentations of $\verQ(N)$. 
The advantage is that the subrepresentations of $\Phi$ and $\Psi$ are much simpler to identify, because these are representations of the single parabolic algebra. Let us discuss possible subrepresentations of interest for applications to supergravity in two dimensions.

\subsubsection{Application to Supergravity in two dimensions}
\label{subsubsec:Applications_to_Supergravity}
Different subrepresentations of $\verQ(2)$ and $\verQ(4)$ find applications in maximal supergravity in two dimensions. One application is the supersymmetry of the gauge fields of the theory while another application is the decomposition of the embedding tensor in fermion bilinears. We first introduce the $\enn$ and $\kenn$ representations for the fields of interest. 

For each of the two space time indices, the gauge fields of supergravity in two dimensions transform in the basic representation \cite{Samtleben:2007an}. The fermions of the supergravity theory are $\Phi\otimes \mathcal C_2$ with $\Phi$ in \eqref{eq:Phi_And_Psi} but without the three form at parabolic level $2$ and $\mathcal C_2$ accounts for the two chiralities of spinors in two dimensions \cite{Nicolai:2004nv}. The supersymmetry parameter of the theory is $\epsilon^I\simeq \psi_0^I \otimes \mathcal{C}_2$, which is $\Psi \otimes \mathcal{C}_2$ in \eqref{eq:Phi_And_Psi} but keeping only the $\so(16)$-vector representation at parabolic level zero.
Then, schematically the supersymmetry of the vector fields is associated to the homomorphism $\Gmap_2$ in \eqref{eq:GMap} which maps onto the tensor product of 
\begin{align}
\epsilon^K \otimes \Phi^{I,\dot A, J} \otimes \mathcal C_2\, . 
\end{align} 
Only one factor of $\mathcal C_2$ remains because the two factors of $\mathcal{C}_2$ are multiplicative.

Another interesting object necessary for gauged maximal supergravity in two dimensions is the embedding tensor. On the one hand, the embedding tensor takes values in the basic representation of $\enn$ \cite{Samtleben:2007an} on the other hand it decomposes into fermion bilinears. This decomposition is the homomorphism $\Gmap_4$ in \eqref{eq:GMap} mapping on the tensor product of the fermions 
\begin{align}
\Psi^{K,\dot B, L}\otimes \Phi^{I,\dot A, J}\otimes \mathcal{C}_2
\end{align}
but with the same $\so(16)$-components $\psi_k^X = \varphi_k^X$. This is a subrepresentation of $\verQ(4)$. 

In a subsequent paper, we extend this schematic realization of supersymmetry and the decomposition of the embedding tensor to a full description.

\begin{appendices}

\section{Representations of double parabolic algebra}
\label{app:Reps}
The representations of the single parabolic algebra are constructed in \cite{Kleinschmidt:2021agj} from a Verma module starting as a $\kfinite$-representation at parabolic level zero. By the parabolic grading, every $\kfinite$-representation in this Verma module generates a subrepresentation. These subrepresentations can be quotiented from the module to obtain finite dimensional representations. 
This procedure extends to the double parabolic algebra
\begin{align}
\left(\kfinite \otimes \reals[u^{2n}]
\, \oplus\,  \pfinite \otimes u\reals[u^{2n}]\right)\otimes \mathcal C_2
\end{align}
but in both parameters $u$ and $1\neq r\in\mathcal C_2$. However, $r$ has only a $\ints_2$ grading and requires additional considerations. 
The Lie bracket is multiplicative in $u$ and $r\in \mathcal{C}_2$ and has therefore a graded structure in $u$ and a $\ints_2$ graded structure in $r$. However, the universal enveloping algebra is $not$ multiplicative in $u$ and $r$, which equips the universal enveloping algebra with a natural graded structure in $u$ and in $r$. Therefore, representations of the double parabolic algebra are graded with respect to $u$ and $r$. To construct these representations we extend the construction in \cite{Kleinschmidt:2021agj} to the double parabolic algebra. 

Therefore, let us use $\Nfrak$ for the single parabolic algebra, which is obtained from $\kfrakpar$ by setting $r=1$ and the
universal enveloping algebra of the single parabolic generators with positive power in $u$ is $\Uc(\Nfrak_+)$ in \cite{Kleinschmidt:2021agj}. This extends for the double parabolic algebra to the universal enveloping algebra of double parabolic algebra with positive power in $u$ or in $r$
\begin{align}
\Uc({\kfrakpar}_+) = \Uc\left( \Nfrak_+\cdot r\,\oplus\, \Nfrak_+\,\oplus\, \textrm{Sym}\left(\,\kfinite\cdot r\,\right)\right)\,. 
\end{align}
A basis of $\Uc({\kfrakpar}_+)$ is given by products of elements of $ \Nfrak_+\cdot r$ to the left of $\Nfrak_+$, which are to the right of symmetric products in $ \kfinite\cdot r$. 

Let $\mathfrak{B}_{\Nfrak_+}$ be a PBW basis of $\Uc(\Nfrak_+)$ and let $\tilde{\mathfrak{B}}_{\Nfrak_+}$ 
be the set $\mathfrak{B}_{\Nfrak_+}$ but with each generator of $\Nfrak_+$ multiplied by $r$. Let $\tilde{\mathfrak{B}}_{\kfinite}$ be the PBW 
basis of $\kfinite$ but with each generator multiplied by $r$, then a PBW basis of $\Uc({\kfrakpar}_+)$ is given by the ordered products of 
\begin{align}
\mathfrak{B}_{{\kfrakpar}_+} \eq \tilde{\mathfrak{B}}_{\Nfrak_+}\times \mathfrak{B}_{\Nfrak_+}\times \tilde{\mathfrak{B}}_{\kfinite}\,.
\end{align}

By the construction in \cite{Kleinschmidt:2021agj}, the representations of the double parabolic algebra $\kfrakpar$ are constructed from the action of $\Uc({\kfrakpar}_+)$ on a $\kfinite$-representation at level $u^0$ and $r^0$.

The parabolic grading in $r$ and $u$ allows to identify infinite many ideals by restricting to a maximal level in $u$ and $r$ of a representation. These ideals allow for finite dimensional representations of $\kfrakpar$, by setting the action of the ideals to zero. For example, restricting to $r^0$ reduces the representations of the double parabolic algebra to representations of the parabolic algebra $\Nfrak$. Finite dimensional representation of the parabolic algebra are obtained for example by considering a maximal level $u^N$.

\section{Generalized summation}
\label{app:PolySum}
It is useful to have a notation of generalized sums over polynomials $p(k)$, where the upper limit can be lower than the lower limit, but the sum remains finite. For a meaningful definition of generalized sum Faulhaber's formula is useful.

Let $n,N\in\nats$ and let $p\in\reals[k]_N$ be a polynomial of degree $N$, then the sum
\begin{align}
P(n) \eq \sum_{k=1}^n p(k) \in n\,\reals[n]_{N}
\end{align}
is a polynomial of degree $N+1$ on the domain $n\in\nats$ with a zero at $n=0$ \cite{Jacobi1834}.\footnote{The full formula for sums over monomials is called the Faulhaber formula 
\begin{align}
\sum_{k=1}^{n} k^p 
\eq \frac{1}{p+1}\sum_{r=0}^p
\binom{p+1}{r}B_r n^{p+1-r} 
\end{align}
where $B_r$ are the Bernoulli numbers.
It was proven by L. Tits in 1923.}
The polynomial $P$ extends uniquely to the domain $n\in\ints$ as a polynomial $\bar P\in n\reals[n]_N$ of degree $N+1$. For this polynomial we derive an identity, therefore, let $n_0\in\nats$, $1\leq n\leq n_0$, then in
\begin{align}
P(n) \eq P(n_0) -\sum_{k=n+1}^{n_0} p(k),
\end{align}
the right hand side is a polynomial $P'\in\reals[n]_{N+1}$ on the domain $n\in\ints, \; n\leq n_0$. The unique polynomial extension of $P'$ from the domain $n\in\ints,\; n\leq n_0$ to $n\in\ints$ defines $\bar P'$, but because $n_0$ is arbitrary, $\bar P'(n) \eq \bar P(n)$ for all $n\in\ints$ and therefore $\bar P' = \bar P$. For all $n\in\nats_0$ this implies the identity
\begin{align}
\label{eq:Summation_Negative_Limit}
\bar P(-n) \eq -\sum_{k=0}^{n-1}p(-k)\,.
\end{align}
Now, we define the generalized sum over the polynomial $p$ of degree $N$
\begin{align}
\label{eq:Generalized_Sum_Def}
\gsum{m=1}{n} p(m) \eq \bar P(n)\in n\,\reals[n]_{N}
\end{align}
as the polynomial extension of $\sum_{m=1}^n p(m)$ from the naturals to the integers $n\in\ints$. 
By the above argument, the generalized sum satisfies for every $n\in\nats$
\begin{subequations}
\begin{align}
 \gsum{k=1}{-n} p(k) \aeq - \sum_{k=0}^{n-1} p(-k)\label{eq:Generalized_Sum_Id}\\
 \gsum{k=1}{0} p(k) \aeq - \gsum{k=0}{-1} p(-k)=0\label{eq:Generalized_Sum_Id_0}
\end{align}
\end{subequations}

\section{Definitions and Notations}
\label{app:Cosocle_Filtration}
We introduce some notations and frequently used definitions in this work.

\paragraph{Notation sets and families.}
We write the set of elements with an additional index by
\begin{align}
\label{eq:SetNotation}
\{x\}_I \aeq \{x_n\forwhich n\in I\}\nn\\
\{x\in X\}_I \aeq \{x_n|x\in X,\,n\in I\}\, ,
\end{align}
but if $I$ is clear from the context, then we also write $\{x\} \eq \{x\}_I$.

\paragraph{Embedding and projection.} For two vector spaces $V,W$ and a homomorphism $\Gmap':\,V\to W$ we say that $V$ is \textit{embedded }in $W$ if $\Gmap'$ is injective and if $\Gmap'$ is surjective we say that $W \simeq V/(\mathrm{Ker}(\Gmap')$ is a \textit{projection} of $V$. If the vector spaces are representations of a Lie algebra $\mathfrak{l}$ and $\Gmap'$ respects the action of $\mathfrak{l}$ we also call the embedding (resp. projection) a $\mathfrak{l}$-embedding (resp. $\mathfrak{l}$-projection). If it is clear from the context we also avoid the symbol $\mathfrak{l}$ and simply write embedding (resp. projection). 

\paragraph{Lie algebra structure and fineness.}
For a Lie algebra $\mathfrak{l}$ with a representation $V$, a \textit{$\mathfrak{l}$-structure} of $V$ is a family of $\mathfrak{l}$-representations $V_i\subset V$. For two $\mathfrak{l}$-structures $A$ and $B$, we call $A$ \textit{finer} than $B$ if $B\subset A$. If for a $\mathfrak{l}$-structure $A$ holds $B\subset A$ for all $\mathfrak{l}$-structures $B$, we call $A$ the \textit{finest $\mathfrak{l}$-structure.}

\paragraph{Cosocle Filtration and composition series.}
Let us introduce the cosocle filtration and the infinite composition series in a series of definition and implications. For further insides see also \cite{Hu_2022}.
Therefore, let $\mathfrak{l}$ be a Lie algebra with a representation $V$. 
\begin{enumerate}[label=\alph*.)]
\item  A subrepresentation $V\neq W\subset V$ is \textit{maximal} if for any other subrepresentation $U\in V$ hold that $W \subset U$ implies $U=W$ or $U=V$.\\
$\Leftrightarrow$ If $W\subset V$ is maximal then, $\frac{V}{W}$ is simple.
\item  The \textit{radical} $\mathrm{rad}(V)$ of $V$ is the intersection of all maximal subrepresentations of $V$. 
$\Leftrightarrow$ The quotient $\frac{V}{\mathrm{rad}(V)}$ is maximal semisimple.    
\item  The \textit{cosocle} of $V$ is the quotient by the radical $\mathrm{cosoc}(V) = \frac{V}{\mathrm{rad}(V)}$. 
$\Leftrightarrow$ The cosocle is the maximal semisimple quotient of $V$.
\item  The \textit{cosocle filtration} resp. (\textit{radical filtration}) of $V$ is inductively defined. $V_{-1} = V$ and for $i\in\nats_0$, $V_i$ is the kernel of the projection of $V_{i-1}$ to its cosocle. 
$\Leftrightarrow$ $V_0$ is the radical of $V$ and for $i\in\nats_0$, $V_{i+1}$ is the radical of $V_{i}$. The filtration has \textit{finite length} if $J\in\nats$ exists, such that $V_{j}=0$ for $j>J$. Then $V_J$ is semisimple.
\item \label{itm:Composition_Series} A \textit{composition series} of $V$ is a family of subrepresentations $(V_i)_{0\leq i\leq n}$ for an $n\in\nats$ such that
\begin{align}
V=V_0\supset V_{1} \supset\ldots \supset V_n=\{0\}\,
\end{align}
and $V_i/V_{i+1}$ is semisimple. If $V_{n-1}\neq \{0\}$, then $n$ is the \textit{length} of the composition series.
\item  An \textit{infinite composition series} of $V$ are subrepresentations $(V_i)_{i\in \nats}$ such that
\begin{enumerate}[label=\roman*.)]
\item	$V_{0} = V \,, \qquad V_i\supset V_{i+1}\, , \qquad V_{i}\neq\{0\}$. 
\item The inverse limit of $(V_j)_{j\in\nats}$ with the natural embedding $ V_{i+1}\subset V_{i}$ is properly defined and $\lim\limits_{\substack{\longleftarrow \\ j\to \infty}} V_j \eq 0$
\item The quotients $V_i/V_{i+1}$ are semisimple.
\end{enumerate}
This is the proper generalization of \ref{itm:Composition_Series}.
The cosocle filtration is an infinite composition series if \textit{i.} and \textit{ii.} hold.
\end{enumerate}

\section{Expansions in first orders}
\label{app:ExpansionFirst order}
We break down commonly used expressions into explicit expansions to make further calculations easier and to gain some intuition about the underlying concepts.

\paragraph{Lie algebra homomorphism.}
The Lie algebra homomorphism from the maximal compact subalgebra to the double parabolic algebra is explicitly on the first $3$ loop generators
\begin{align}
\label{eq:LieHomDouble_Expl}
\rho(\Ec^\alpha_{0}) \aeq P_{0,0}^\alpha\nn\\
\rho(\Ec^\alpha_{\pm 1}) \aeq P_{0,1}^\alpha+2\sum_{k\geq 1} (\mp)^k P_{k,1}^\alpha\nn\\
\rho(\Ec^\alpha_{\pm 2}) \aeq P_{0,0}^\alpha+4\sum_{k\geq 1} (\mp)^k k P_{k,1}^\alpha\nn\\
\rho(\Ec^\alpha_{\pm 3}) \aeq P_{0,1}^\alpha+2\sum_{k\geq 1} (\mp)^k(1+2k^2) P_{k,1}^\alpha\\
&\ldots\nn
\end{align}
\begin{align}
\rho(\Hc^\alpha_{\pm 1}) \aeq -2\sum_{k\geq 1} (\pm)^k \Qp_{2k+1,1}^\alpha\nn\\
\rho(\Hc^\alpha_{\pm 2}) \aeq -4\sum_{k\geq 1} (\pm)^k (2k+1) \Qp_{2k+1,1}^\alpha\nn\\
\rho(\Hc^\alpha_{\pm 3}) \aeq -2\sum_{k\geq 1} (\pm)^k(8k^2+8k+3) \Qp_{k,1}^\alpha\\
&\ldots\nn
\end{align}

\paragraph{Schur Polynomials.}
Up to index $4$, the Schur polynomials are
\begin{align}
\label{eq:Schur_x_Examples}
S_{n\leq-1}(\{x\}) \aeq 0
\nn\\
S_0(\{x\}) \aeq 1
\nn\\
S_{1}(\{x\})\aeq x_{-1}\nn\\
S_2(\{x\}) \aeq  \half x_{-2} + \half x_{-1}^2\nn\\
S_3(\{x\}) \aeq  \tfrac{1}{3} x_{-3} + \half x_{-1}x_{-2} +\tfrac{1}{6}x_{-1}^3 \nn\\ 
S_4(\{x\}) \aeq  \tfrac{1}{4} x_{-4} + \tfrac{1}{3} x_{-1}x_{-3} +\tfrac{1}{8}x_{-2}^2+ \tfrac{1}{4} x_{-1}^2x_{-2}+ \tfrac{1}{24} x_{-1}^4 \\
&\ldots\nn
\end{align}

\paragraph{\textit{H}-Schur Polynomials.}
The $\Hc$-Schur polynomials generate the Schur polynomials by action on $1$. They are given by
\begin{align}
\label{eq:Schur_H_Examples}
S_{0}^{\Hc}(\{\Hc^\alpha\}) \aeq  1\, , \nn \\
S_{1}^{\Hc}(\{\Hc^\alpha\}) \aeq  -2\Hc^\alpha_1\, , \nn \\
S_{2}^{\Hc}(\{\Hc^\alpha\}) \aeq  1+2(\Hc^\alpha_1)^2 - \Hc^\alpha_2\, , \nn \\
S_{3}^{\Hc}(\{\Hc^\alpha\}) \aeq  -2\Hc^\alpha_1-\tfrac{4}{3}(\Hc^\alpha_1)^3 +2 \Hc^\alpha_1\Hc^\alpha_2- \tfrac{2}{3} \Hc^\alpha_3\, , \\
\ldots\nn
\end{align}

\paragraph{Extended $\Qp$-Schur Polynomials.}
The extended $\Qp$-Schur polynomials $\bar S^\Qp_{\alpha,a}(n)$ are satisfy for $n\geq 0$ the identity $\bar S^\Qp_{\alpha,0}(2n) = \rho(S^{\Hc}_{2n}(\{\Hc^\alpha\})$ and $\bar S^\Qp_{\alpha,1}(2n+1) = \rho(S^{\Hc}_{2n+1}(\{\Hc^\alpha\}))$, they are given up to parabolic level $2$ by
\begin{align}
\label{eq:App_Q_Schur_Poly}
\bar S_{\alpha,0}^{\Qp}(n) \aeq 1 \,+\, 2n\Qp^\alpha_{1,0} \,+\, (2n+n^2)\Qp^\alpha_{1,1}\Qp^\alpha_{1,1} \,+\, (-2n+n^2) \Qp^\alpha_{1,0}\Qp^\alpha_{1,0}\,+\, \ldots\nn\\[2pt]
\bar S_{\alpha,1}^{\Qp}(n) \aeq  2(n+1)\Qp^\alpha_{1,1} \,+\, (-2+2n^2)\Qp^\alpha_{1,1}\Qp^\alpha_{1,0} \,+\, \ldots
\end{align}

\paragraph{Decomposition of basic representation $\basic$ of $\enn$.}
The basic representation of $\enn$-decomposes into $\eee$-representations at the different levels. For the first five levels this decomposition is given by
\begin{align}
\label{app:Decompose_enn_Basic}
\basic &\eq  \left(\mathbf{1}_{\eee}\right)_0\; \oplus\; \left( \mathbf{248}_{\eee}\right)_1\; \oplus\;\left(\mathbf{1}_{\eee}\;\oplus\; \mathbf{248}_{\eee} \oplus \;\mathbf{3875}_{\eee}\right)_2
\nn\\[5pt]
&\oplus\; \left(\mathbf{1}_{\eee}\oplus 2\cdot \mathbf{248}_{\eee}\oplus \mathbf{3875}_{\eee}\oplus \mathbf{30380}_{\eee}\right)_3 
\nn\\
&\oplus\; \left(
 2\cdot \mathbf{1}_{\eee}\oplus 3\cdot \mathbf{248}_{\eee}\oplus 2\cdot \mathbf{3875}_{\eee}\oplus \mathbf{30380}_{\eee}\oplus \mathbf{27000}_{\eee}\oplus \mathbf{147250}_{\eee} \right)_4
\nn\\
&\oplus\; \;\ldots 
\end{align}
The individual $\eee$-representation can be further decomposed into the maximal compact subalgebra $\so(16)$ of $\eee$
\begin{align}
\mathbf{1}_{\eee} \quad &\rightarrow\quad \mathbf{1}_{\so(16)}\nn\\
\mathbf{248}_{\eee} \quad &\rightarrow\quad \mathbf{120}_{\so(16)}\oplus\mathbf{128}_{\so(16)}\nn\\
\mathbf{3875}_{\eee} \quad &\rightarrow\quad \mathbf{135}_{\so(16)}\oplus\mathbf{1820}_{\so(16)}\oplus\mathbf{1920}_{\so(16)}\nn\\
\mathbf{30380}_{\eee} \quad &\rightarrow\quad \mathbf{120}_{\so(16)}\oplus\mathbf{1920}_{\so(16)}\oplus\mathbf{7020}_{\so(16)}\oplus\mathbf{8008}_{\so(16)}\oplus\mathbf{13312}_{\so(16)}\nn\\
\ldots
\end{align}

\section{Details in proofs}
\label{app:Appendix_Details}
For the convenience of the interested reader, this appendix provides more details and explicit calculations of various proofs, which are not given in full detail in the main text.
\paragraph{Commutation of $U\in\Uc(\cheisenberg)$ with $\Ec_n^r$.}
The commutator of $\Ec^i_n$ with $\prod_{l=1}^N \Hc^{j_l}_{n_l}$ is the group like action of $\Ec^i_n$ on the universal enveloping of the compact Heisenberg algebra. Therefore, the commutator of $\Ec^i_n$ with a product is the product of the commutators. For $A_{ij}=0$, $\Ec^i_n$ and $\Hc^j_m$ commute. It remains to repeatedly evaluate the commutator of $\Ec_N^i$ and products of $\Hc_m^j$ for $A_{ij} = -1,2$. Here, we evaluate the commutator for $A_{ij} = 2$. Therefore, the set
\begin{align}
\Pc_N \acoloneqq \Set{p = (p_1,\ldots ,p_N) \in \ints_2^N \forwhich p_l\in \{0,\, 1\}}\, ,
&|p| \ecoloneqq \sum_{l=1}^N p_l\; .
\end{align} 
is useful. In this notation, the commutator is
\begin{align}
\label{eq:ComutatorEcUHc}
\left[\Ec^i_n\, , \; \prod_{i=1}^N \Hc^i_{n_l}\right] \aeq (-1)^N \left(\sum_{p \in P_N}(-1)^{|p|}(\half  \Ec^i_{n+\sum_{i=1}^N(-1)^{p_l}n_l}\right)\nn\\
\, &+ \,  (-1)^{N-1}\sum_{k_1=1}^N\Hc^i_{n_{k_1}} \left(\sum_{p \in P_{N-1}} (-1)^{|p|} \Ec^i_{n+\sum_{k_1\neq i=1}^n(-1)^{p_l}n_l}\right)\nn\\
\, &+ \,  (-1)^{N-2}\sum_{k_1=1}^N\sum_{k_2=k_1+1}^N\Hc^i_{n_{k_1}}\Hc^i_{n_{k_2}} \left(\sum_{p \in P_{N-1}}(-1)^{|p|}  \Ec^i_{n+\sum_{k_{1,2}\neq i=1}^n(-1)^{p_l}n_l}\right)\nn\\
\, & + \, \ldots \nn\\
\, &+ \,  (-1)^1\sum_{k_1=1}^N\sum_{k_2=k_1+1}^N\ldots \sum_{k_{N-1}=k_{N-2}+1}^N\Hc^i_{n_{k_1}}\Hc^i_{n_{k_2}}\ldots \Hc_{n_{k_{N-1}}}\nn\\ 
& \hspace*{5.3cm} \left(\sum_{p \in P_{1}} (-1)^{|p|}\Ec^i_{n+\sum_{k_{1,2,\dots N-1}\neq i=1}^n(-1)^{p_l}n_l}\right)\, ,
\end{align}
where $k_{1,2,\dots} \neq i$ means $i \neq k_1$, $i\neq k_2$, ... .
The commutator for $A_{ij}=-1$ has additional factors of $-\half$.

\paragraph{Proof Lemma \ref{lem:SchurH}.}
The evaluation of \eqref{eq:ProofLemSH} in the proof of Lemma \ref{lem:SchurH} is in detail
\begin{align}
\Sh_N&(\HSet^i) \eq \frac{2}{N+1}\sum_{n=1}^{N+1} \left(\Sh_{N-2n+1}-\Hc_nS_{N-n+1} \right)\nn\\
&\eq\sum_{p\geq 0}\sum_{k\in\Kc_{p}}\delta_{(p+N+1)mod2,0}\sum_{n=1}^{N+1} \frac{2}{N+1}\Big(\sum_{n=1}^{N+1}\Theta(N-2n+1-deg(k)) 
 -(\frac{nk_n}{2})\Big)d_k(\Hc^i)^k
\nn\\
&\eq
\sum_{p\geq 0}\sum_{k\in\Kc_{p}}\delta_{(p+N+1)mod2,0}\Theta(N+1-p) \frac{2}{N+1}\Big(\left\lfloor\frac{N+1-p}{2}\right\rfloor 
 +\sum_{n=1}^{N+1}(\frac{nk_n}{2})\Big)d_k(\Hc^i)^k
\nn\\
&\eq
\sum_{p\geq 0}\sum_{k\in\Kc_{p}}\delta_{(p+N+1)mod2,0}\Theta(N+1-p) \frac{2}{N+1}\Big(\frac{N+1-p}{2}
 +\frac{p}{2}\Big)d_k(\Hc^i)^k
\nn\\
&\eq
\sum_{p\geq 0}\sum_{k\in\Kc_{p}}\delta_{(p+N+1)mod2,0}\Theta(N+1-p) d_k(\Hc^i)^k\, .
\end{align}

\paragraph{Proof Proposition \ref{prop:VerQRepresentation}.}
To prove the commutation of $\Ec_m^\alpha$ with $\Ec_n^\beta$ on $\verQ(N)$, we argued with the commutation relations of the basic representation this translates to the commutation relations on $\verQ(N)$. For the readers convenience, we illustrate this in more detail here. 

Let us define a map $\tilde \Gmap_N$ from the basic representation onto the $\verQ(N)$ by
\begin{align}
\tilde \Gmap_N\left(e^{\gamma}\otimes f(\{\alpha\})\right)
\eq 
\Fc_f^{N,\Qp}(\QSet) \be_{\gamma}
\end{align}
for $\gamma \in \Qr$ and $f$ a polynomial.
By the definition of the generating elements $\Fc_f^\Qp$ the homomorphism $\tilde \Gmap_N$ commutes with the compact Heisenberg algebra. By the action of $\Ec_m^\alpha$ on $\be_{\gamma}$  \eqref{eq:Action_Ec_on_be_alpha} and by its action on the maximal states \eqref{eq:Ec_nOnMaximalState}, $\tilde \Gmap_N$ commutes with $\Ec_n^\alpha$ on maximal states. Additionally, by construction $\Ec_m^\alpha$ and the parabolic Heisenberg algebra satisfy the commutation relations. This is enough to prove the commutation relations of $\Ec_m^\alpha$ and $\Ec_n^\beta$ on $\be_\gamma$
\begin{align}
\label{eq:app_prop_addition_1}
\Ec_m^\alpha\Ec_n^\beta \, \be_\gamma \eq  \epsilon(\beta,\gamma)\left(\Ec_m^\alpha S^{\Hc}_{-n-(\beta|\gamma)-1}\tilde \Gmap_N(e^{\gamma+\beta}\otimes 1)+\Ec_m^\alpha S^{\Hc}_{n+(\beta|\gamma)-1}\tilde \Gmap_N(e^{\gamma-\beta}\otimes 1)\right)
\end{align}
Here, let us just consider the first term in the sum, the other term works the same. We use the commutation relations of $\Ec_m^\beta$ and the compact Heisenberg algebra in \eqref{eq:Commutation_Ecn_Heisenberg} with $J' = J(S^{\Hc}_{-n-(\beta|\gamma)-1})$, $U' = U(S^{\Hc}_{-n-(\beta|\gamma)-1})$.
Then, the first term becomes
\begin{align}
\sum_{j=1}^{J'}{U'}^\alpha_j \Ec_m^\alpha \tilde \Gmap_N\left(e^{\gamma+\beta}\otimes 1\right) 
\aeq 
\tilde \Gmap_N\left(\sum_{j=1}^{J'}{U'}^\alpha_j \Ec_m^\alpha e^{\gamma+\beta}\otimes 1\right)
\eq \tilde \Gmap_N\left(\Ec_m^\alpha \Ec_n^\beta e^{\gamma}\otimes 1\right)\,.
\end{align}
The same argument holds for the second term in \eqref{eq:app_prop_addition_1} as well. Therefore, for all $\gamma\in \Qr$
\begin{align}
\left(\Ec_n^\alpha\Ec_m^\beta - \Ec_m^\beta\Ec_n^\alpha - \left[\Ec_n^\alpha\, ,\, \Ec_m^\beta\right]\right)\be_\gamma
\eq 
\tilde \Gmap_N\left(\left(\Ec_n^\alpha\Ec_m^\beta - \Ec_m^\beta\Ec_n^\alpha - \left[\Ec_n^\alpha\, ,\, \Ec_m^\beta\right]\right)e^\gamma\otimes 1\right) \eq 0\,.
\end{align}
This proves the third step of the proof of Proposition \ref{prop:VerQRepresentation} in detail.

\end{appendices}

\printbibliography

\end{document}